\documentclass[11pt,reqno]{amsart}

\usepackage{enumerate}
\usepackage{amsmath, amssymb, amsthm}
\usepackage{mathrsfs}
\usepackage{esint}
\usepackage{xcolor}
\usepackage{mathtools}
\usepackage{hyperref}
\usepackage{bm}

\topmargin 0.25in \textheight 8.5in \flushbottom
\setlength{\textwidth}{6.in} 
\setlength{\oddsidemargin}{.25in} 
\setlength{\evensidemargin}{.25in}


\numberwithin{equation}{section}

\newtheorem{theorem}{Theorem}[section]
\newtheorem{lemma}[theorem]{Lemma}
\newtheorem{corollary}[theorem]{Corollary}
\newtheorem{remark}[theorem]{\bf{Remark}}
\newtheorem{assumption}[theorem]{Assumption}

\newtheorem{definition}[theorem]{Definition}

\theoremstyle{remark}
\theoremstyle{definition}

\newcommand\bL{\mathbb{L}}

\newcommand\bR{\mathbb{R}}

\newcommand\bH{\mathbb{H}}

\newcommand\bZ{\mathbb{Z}}

\newcommand\bE{\mathbb{E}}
\newcommand\bN{\mathbb{N}}



\newcommand\cB{\mathcal{B}}
\newcommand\cC{\mathcal{C}}
\newcommand\cD{\mathcal{D}}
\newcommand\cF{\mathcal{F}}

\newcommand\cH{\mathcal{H}}

\newcommand\cL{\mathcal{L}}
\newcommand\cP{\mathcal{P}}

\newcommand\cS{\mathcal{S}}
\newcommand\cM{\mathcal{M}}

\newcommand\cbrk{\text{$]$\kern-.15em$]$}}
\newcommand\opar{\text{\,\raise.2ex\hbox{${\scriptstyle
|}$}\kern-.34em$($}}
\newcommand\cpar{\text{$)$\kern-.34em\raise.2ex\hbox{${\scriptstyle |}$}}\,}

\newcommand\ep{\varepsilon}

\begin{document}

\title[STFBEs driven by multiplicative space-time white noise]{$L_p$-solvability and H\"older regularity for stochastic time fractional Burgers' equations driven by multiplicative space-time white noise}

\author{Beom-Seok Han 
}

\address{Department of Mathematics, Pohang University of Science and Technology, 77, Cheongam-ro, Nam-gu, Pohang, Gyeongbuk, 37673, Republic of Korea}

\email{hanbeom@postech.ac.kr}

\thanks{This work was supported by the National Research Foundation of Korea (NRF) grant
funded by the Korea government (MSIT) (No. NRF-2021R1C1C2007792) 
and the BK21 Fostering Outstanding Universities for Research (FOUR) funded by the Ministry of Education (MOE, Korea) and the National Research Foundation of Korea (NRF)}

\subjclass[2020]{35R11, 26A33, 60H15, 35R60}

\keywords{Stochastic partial differential equation,
Time fractional derivatives,
Stochastic Burgers' equation,
Time fractional Burgers' equation,
Space-time white noise,
H\"older regularity}

\begin{abstract}
We present the $L_p$-solvability for stochastic time fractional Burgers' equations driven by multiplicative space-time white noise:
$$ 
\partial_t^\alpha u = a^{ij}u_{x^ix^j} + b^{i}u_{x^i} + cu  + \bar b^i u u_{x^i} + \partial_t^\beta\int_0^t \sigma(u)dW_t,\,t>0;\,\,u(0,\cdot) = u_0,
$$
where $\alpha\in(0,1)$, $\beta < 3\alpha/4+1/2$, and  $d< 4 - 2(2\beta-1)_+/\alpha$.  The operators $\partial_t^\alpha$ and $\partial_t^\beta$ are the Caputo fractional derivatives of order $\alpha$ and $\beta$, respectively. The process $W_t$ is an $L_2(\bR^d)$-valued cylindrical Wiener process, and the coefficients $a^{ij}, b^i, c$ and $\sigma(u)$ are random.

In addition to the existence and uniqueness of a solution, we also suggest the H\"older regularity of the solution. For example, for any constant $T<\infty$, small $\ep>0$, and almost sure $\omega\in\Omega$, we have
$$ \sup_{x\in\bR^d}|u(\omega,\cdot,x)|_{C^{\left[ \frac{\alpha}{2}\left( \left( 2-(2\beta-1)_+/\alpha-d/2 \right)\wedge1 \right)+\frac{(2\beta-1)_{-}}{2} \right]\wedge 1-\ep}([0,T])}<\infty
$$
and
$$ \sup_{t\leq T}|u(\omega,t,\cdot)|_{C^{\left( 2-(2\beta-1)_+/\alpha-d/2 \right)\wedge1 - \ep}(\bR^d)} < \infty.
$$
The H\"older regularity of the solution in time changes behavior at $\beta = 1/2$. Furthermore, if $\beta\geq1/2$, then the H\"older regularity of the solution in time is $\alpha/2$ times the one in space.

\end{abstract}

\maketitle

\section{Introduction}
This article investigates the existence, uniqueness, $L_p$-regularity, and maximal H\"older regularity of a solution to stochastic time fractional Burgers' equations (STFBEs) driven by space-time white noise. We consider 
\begin{equation}
\label{main_equation}
\begin{aligned}
\partial_t^\alpha u = Lu  + \bar b^i u u_{x^i} + \partial_t^\beta\int_0^t \sigma(u)dW_t,\quad (\omega,t,x)\in\Omega\times(0,\infty)\times\bR^d;\quad u(0,\cdot) = u_0,
\end{aligned}
\end{equation}
where $\alpha\in(0,1)$, $\beta < \frac{3}{4}\alpha + \frac{1}{2}$, and $d < 4 - \frac{2 (2\beta-1)_+}{\alpha}$. The operators $\partial_t^\alpha$ and $\partial_t^\beta$ are the Caputo fractional derivatives of order $\alpha$ and $\beta$, and the operator $L$ is the second order  {\it random} differential operator defined as follows: 
\begin{equation*}
\begin{gathered}
(L u)(\omega,t,x)  = a^{ij}(\omega,t,x) u_{x^ix^j} + b^i(\omega,t,x) u_{x^i} + c(\omega,t,x) u.
\end{gathered}
\end{equation*}
The random coefficients $a^{ij}$, $b^i$, and $c$ are predictable, differentiable (or continuous), and bounded functions. The diffusion coefficient $\sigma(u) = \sigma(\omega,t,x,u)$ is a predictable and measurable function satisfying growth conditions and Lipschitz continuity in $u$. The detailed conditions on $a^{ij}$, $b^i$, $c$, and $\sigma$ are described in Assumptions \ref{assumptions_on_coefficients_deterministic_part} and \ref{assumptions_on_coefficients_stochastic_part}. The random measure $dW_t$ is induced from an $L_2(\bR^d)$-valued cylindrical Wiener process $W_t$.

When $\alpha = \beta = 1$ in equation \eqref{main_equation}, the equation is said to be a stochastic Burgers' equation (SBE) of form 
\begin{equation}
\label{SBE}
\partial_t u = Lu  + \bar b u u_{x} +  \sigma(u)\dot W,\quad (\omega,t,x)\in\Omega\times(0,\infty)\times\bR;\quad u(0,\cdot) = u_0,
\end{equation}
where $\dot W$ is the space-time white noise.
Numerous studies have been conducted on the equation \eqref{SBE}, but we only refer to the reader to \cite{gyongy1998existence,gyongy1999stochastic,lewis2018stochastic}. In \cite{gyongy1998existence}, the author proved the uniqueness, existence, and continuity of a solution to a semilinear equation, including an equation of type \eqref{SBE} on the unit interval $(0,1)$. Additionally, the same properties of a solution on $\bR$ were obtained in \cite{gyongy1999stochastic} when the $L_2$ bounded conditions on $\sigma(u)$ were imposed. In \cite{lewis2018stochastic}, the authors investigated the H\"older regularity and moment estimates of the random field solution to \eqref{SBE} with $L = \Delta$ and $\bar b = -1$.

In contrast, (deterministic) partial differential equations with Caputo fractional derivatives have been used in many fields, such as electrochemical processes \cite{bagley1985fractional,ichise1971analog}, dielectric polarization \cite{sun1984application}, viscoelastic materials \cite{podlubny1998fractional}, biology \cite{magin2004fractional}, and physics \cite{giona1992fractional,hilfer2000applications}. Especially, equation \eqref{main_equation} with $\alpha\in(0,1)$ and $\sigma(u) = 0$ is called a time fractional Burgers' equation (TFBE), which describes the propagation of waves through viscous media (\cite{garra2011fractional,keller1981propagation}). Indeed, various researches have been conducted on numerical analysis for the TFBE (see \cite{akram2020efficient,el2012parametric,esen2015numerical,inc2008approximate,li2016linear}). From a mathematical standpoint, it is reasonable to wonder whether it is possible to demonstrate the uniqueness and existence of a solution to STFBE \eqref{main_equation}, and also to obtain the H\"older regularity of the solution. To the best of our knowledge, \cite{zou2017stochastic} is the only study that answers this question. The authors of \cite{zou2017stochastic} demonstrate the existence, uniqueness, and regularity of the {\it mild} solution to SBEs with fractional derivatives in time and space on a bounded domain $\cD\subset \bR^d$.

In this paper, we provide the $L_p$ uniqueness, existence, and regularity of a \textit{strong} solution to equation \eqref{main_equation} with {\it random} second order differential operator $L$ on the {\it whole spatial domain} $\bR^d$. Additionally, we achieve the H\"older regularity of the solution in time and space. In detail, if $u(\omega,t,x)$ denotes the solution to equation \eqref{main_equation}, then for any bounded stopping time $\tau\leq T$ and small constant $\ep>0$, almost surely,
\begin{equation}
\begin{gathered}
\sup_{x\in\bR^d}|u(\omega,\cdot,x)|_{C^{\left[ \frac{\alpha}{2}\left( \left( 2-(2\beta-1)_+/\alpha-d/2 \right)\wedge1 \right)+\frac{(2\beta-1)_{-}}{2} \right]\wedge 1-\ep}([0,\tau])} < \infty, \label{intro: holder reg of sol} \\
\sup_{t\leq\tau}|u(\omega,t,\cdot)|_{C^{\left( 2-(2\beta-1)_+/\alpha-d/2 \right)\wedge1 - \ep}(\bR^d)} < \infty.
\end{gathered}
\end{equation}
where $a_{+} = (|a|+a)/2$, $a_{-} = (|a|-a)/2$, and $C^\gamma(\cD)$ is the H\"older spaces. Observe that the behavior of the H\"older regularity of the solution in time changes from $\beta = 1/2$. For example, if $\beta \geq 1/2$, then the H\"older regularity of the solution in time is $\alpha/2$ times that of the regularity in space. Additionally, we can recover the the H\"older regularity results of SBEs by letting $\alpha,\beta \uparrow 1$. These results are consistent with the well-known results of stochastic heat equations driven by space-time white noise (e.g. \cite[Remark 8.7]{kry1999analytic} or \cite[Corollary 3.1]{han2022regularity}). In contrast, if $\beta < 1/2$, the H\"older regularity in time gains additional regularity by as much as $1/2-\beta$. (Remark \ref{regularity comparision}).

Several remarks about the proof are made. The proof strategy for the main theorem (Theorem \ref{main theorem}) is based on \cite{han2022regularity}. However, some differences exist because since it is not certain that It\^o's formula and the maximum principle hold for STFBE \eqref{main_equation}.

Thus, the proof proceeds as follows. As in \cite{han2022regularity}, we focus on proving the uniqueness and existence of the $L_p$ solution in each (large) $p>2$, and the main difficulty is to demonstrating the existence of the solutions. Hence, we consider the cut-off form of equation \eqref{main_equation} to obtain local solutions. Afterward, we construct a global solution candidate $u$ by pasting the local solutions (Lemma \ref{lemma:existence uniqueness reg of local solution} and Remark \ref{def_u}). A uniform $L_p$ bound of $u$ is required to show that our the candidate $u$ is a global solution; thus, we divide the local solution into two parts: the noise-dominating and the nonlinear-dominating part. To estimate the noise-dominating parts, we employ the $L_p$ bound of the diffusion coefficient $\sigma(u)$ (Lemma \ref{lemma:noise dominant part}). In contrast, to control the nonlinear-dominating part (Lemma \ref{lemma:nonlinear control}), we employ an inequality similar to the chain rule (Lemma \ref{lemma: chain ineq}) and a version of the Gr\"onwall inequality including the Caputo fractional derivatives (Theorem \ref{time fractional integral gronwall ineq}).

To obtain the maximal H\"older regularity of the solution to equation \eqref{main_equation}, we require two components: the H\"older embedding theorem for the solution space $\cH_p^\gamma(\tau)$ (Theorem \ref{thm:embedding theorem for sol space}) and the uniqueness of the solution in $p$ (Theorem \ref{uniqueness_in_p}). Indeed, when the $L_p$ existence and uniqueness of a solution are given, we have the H\"older regularity of the solution in each (large) $p>2$ by employing the H\"older embedding theorem for the solution space (Theorem \ref{thm:embedding theorem for sol space} and Theorem \ref{main theorem}). The H\"older regularity of the solution becomes larger as a large $p$ is chosen; thus, we have to select $p$ that is as large as possible. Therefore, we require the uniqueness of solutions in $p$ because $p$ varies.

This article is organized as follows. Section \ref{Preliminaries} introduces the definitions and properties of space-time white noise, fractional calculus, and stochastic Banach spaces. Additionally, we present the H\"older embedding theorem for the solution space $\cH_p^\gamma(\tau)$. Section \ref{Main results} states the main results of this article and suggests some remarks. The proof of the main results is presented in Section \ref{section:proof of main thm}. Next, Section \ref{section: proof of continuity of the solution in t} proves the H\"older embedding theorem for the solution space $\cH_p^\gamma(\tau)$.

We finish this section with an introduction to the notation used in this paper. The sets $\bN$ and $\bR$ are sets of natural and real numbers, respectively. The set $\bR^d$ denotes the $d$-dimensional Euclidean space of points $x = (x^1,\dots,x^d)$ for $x^i\in\bR$. Throughout this paper, we assume Einstein’s summation convention on $i,j,k\in\bN$. We use $:=$ to denote a definition. For a real-valued function $f$, we set the following:
\begin{equation*}
	f_+:=\frac{ | f | + f }{2} \quad \text{and} \quad	f_-:=\frac{ | f | - f }{2}.
\end{equation*}
For a normed space $F$, a measure space $(X,\cM,\mu)$, and $p\in[1,\infty)$, a space $L_p(X,\cM,\mu;F)$ is a set of $F$-valued $\cM^\mu$-measurable functions such that
\begin{equation*}
	\| u \|_{L_p(X,\cM,\mu;F)}:=\left( \int_{X}\| u(x) \|_F^p \mu(dx) \right)^{1/p} < \infty.
\end{equation*}
A set $\mathcal{M}^{\mu}$ is the completion of $\cM$ with respect to the measure $\mu$. For $\gamma\in(0,1]$ and $k = 0,1,2,\dots$, a set $C^{k+\gamma}(\bR^d)$ is the set of $\bR$-valued continuous functions $u = u(x)$ such that
$$ |u|_{C^{\gamma+k}(\bR^d)}:=\sup_{x\in \bR^d,|\bm{\beta}| = k}\left| D^{\bm{\beta}} u(x) \right| + \sup_{\substack{x,y\in\bR^d, x\neq y \\ |\bm{\beta}| = k}}\frac{\left| D^{\bm{\beta}} u(x) - D^{\bm{\beta}} u(y) \right|}{|x-y|^{\gamma}}<\infty,
$$
where $\bm{\beta}$ is a multi-index.
Similarly, for $\gamma\in(0,1]$ and $0 < T < \infty$, the set $C^{\gamma}([0,T];F)$ is the set of $F$-valued continuous functions $u$ such that
$$ |u|_{C^{\gamma}([0,T];F)}:=\sup_{t\in [0,T]}\left|  u(t) \right|_{F}+\sup_{\substack{t,s\in[0,T], \\ s\neq t}}\frac{\left|  u(t) -  u(s) \right|_F}{|t-s|^{\gamma}}<\infty.
$$

For $a,b\in \bR$, we set
$a \wedge b := \min\{a,b\}$ and $a \vee b := \max\{a,b\}$.
Let $\cS = \cS(\bR^d)$ denote the set of Schwartz functions on $\bR^d$. Let $N = N(a_1,a_2,...,a_k)$ be a generic constant if $N$ depends only on $a_1,a_2,...,a_k$. The constant $N$ can vary line by line. For functions depending on $\omega$, $t$, and $x$, the argument $\omega \in \Omega$ is omitted. Finally, for $x\in\bR^d$, $\bar x^i := (x^1,\dots,x^{i-1},x^{i+1},\dots,x^d)$.

\vspace{2mm}

\section{Preliminaries}
\label{Preliminaries}

In this section, we introduce the definitions and properties of space-time white noise, fractional calculus, and stochastic Banach spaces. Throughout this paper, $(\Omega, \cF, P)$ is a complete probability space equipped with a filtration $\{\cF_t\}_{t\geq0}$. Let $\{\cF_t\}_{t\geq0}$ denote a filtration satisfying the usual conditions. Let $\cP$ be the predictable $\sigma$-field related to $\{\cF_t\}_{t\geq0}$.

First, we present the space-time white noise $\dot W$ to understand the stochastic part of \eqref{main_equation}.
\begin{definition}[Space-time white noise]
\label{def if STWN}
A generalized random field $\dot W$ is said to be the space-time white noise if it is a centered Gaussian random field such that its covariance is given by
\begin{equation*} 
\bE\, \dot W(h)\, \dot W(g) = \int_0^\infty \int_{\bR^d} h(t,x) g(t,x) dxdt,\quad \forall h,g \in L_2((0,\infty)\times\bR^d).
\end{equation*}
\end{definition}

\begin{remark} \label{remark:representation}
\label{rmk: convert Walsh integral to series of ito integral} 

We employ a series of It\^o's stochastic integral to interpret the stochastic part of equation \eqref{main_equation}. More precisely, let $\{ \eta^k:k\in\bN \}$ be an orthonormal basis on $L_2(\bR^d)$. If we define
\begin{equation*}
	w_t^k:= \int_0^t \int_{\bR^d} \eta^k(x)\dot W(ds,dx)
\end{equation*}
using the Walsh integral (see \cite{walsh1986introduction}), then $\{w_t^k:k\in\bN\}$ is a set of one dimensional independent Wiener processes. Then, if we set (see \cite[Section 8.3]{kry1999analytic}, and \cite[Section 7]{kim2019sobolev})
\begin{equation*}
W_t := \sum_{k=1}^\infty \eta^k w_t^k,
\end{equation*}
then $W_t$ is an $L_2(\bR^d)$-valued cylindrical Wiener process and $dW_t = \sum_k \eta^k dw_t^k$. Thus, equation \eqref{main_equation} can be rewritten as
\begin{equation*}
\partial_t^\alpha u = Lu  + \bar b^i u u_{x^i} + \partial_t^\beta\int_0^t \sigma(u) \eta^k dw^k_t,\quad (\omega,t,x)\in\Omega\times(0,\infty)\times\bR^d;\quad u(0,\cdot) = u_0.
\end{equation*}

\end{remark}

\vspace{2mm}

Next, we review the facts of fractional calculus. For more information, we refer to the reader to \cite{baleanu2012fractional,herrmann2011fractional,kilbas1993fractional,podlubny1998fractional}.

\begin{definition}
Let $\alpha>0$, and for $\varphi\in L_1((0,T))$, the Riemann-Liouville fractional integral of the order $\alpha$ is defined as follows: 
$$ I_t^\alpha\varphi(t) := (I^{\alpha}_{t}\varphi)(t) := \frac{1}{\Gamma(\alpha)}\int_0^t(t-s)^{\alpha-1}\varphi(s)ds\quad \mbox{for all} \quad t\in (0,T),
$$
where $\Gamma(\alpha):=\int_0^\infty t^{\alpha - 1}e^{-t}dt$. 
\end{definition}

\begin{remark}
For any $q \in [1,\infty]$, by Jensen's inequality
\begin{equation}
\label{ineq:lp ineq for time fractional int}
\|I^\alpha \varphi \|_{L_q((0,T))}\leq N(\alpha,p,T)\| \varphi \|_{L_q((0,T))}.
\end{equation}
Therefore, $I_t^\alpha \varphi(t)$ is well-defined and finite for almost all $t\leq T$.
Additionally, Fubini's theorem implies that, for $\alpha,\beta\geq0$, we have
\begin{equation*}
I^{\alpha+\beta}\varphi(t)=I^{\alpha}I^{\beta}\varphi(t).
\end{equation*}

\end{remark}

\begin{definition}
\label{def of time fractional derivative}
For $\alpha>0$, let $n\in\bN$ be a nonnegative integer such that $n-1 \leq \alpha < n$. Suppose $\varphi(t)$ is a real-valued function on $[0,T]$ such that $\varphi$ is $(n-1)$-times differentiable and $(\frac{d}{dt})^{n-1}\varphi$ is absolutely continuous on $[0,T]$. 
\begin{enumerate}[(i)]
\item 
The Riemann-Liouville fractional derivative $D_t^\alpha \varphi$ is defined as 
\begin{equation*}
D_t^\alpha \varphi(t) := \frac{1}{\Gamma(n-\alpha)}\frac{d^n}{dt^n}\int_0^t (t-s)^{n-\alpha-1}\varphi(s)ds.
\end{equation*}

\item
The Caputo fractional derivative $\partial_t^\alpha\varphi$ is defined as 
\begin{equation*}
\begin{aligned}
\partial_t^\alpha \varphi 
&:= \frac{1}{\Gamma(n-\alpha)}\int_0^t (t-s)^{n-\alpha-1}\varphi^{(n)}(s)ds \\
&:= \frac{1}{\Gamma(n-\alpha)}\frac{d}{dt}\int_0^t (t-s)^{n-\alpha-1}\left[ \varphi^{(n-1)}(s)-\varphi^{(n-1)}(0) \right]ds. \\
\end{aligned}
\end{equation*}

\end{enumerate}
\end{definition}

\begin{remark}
\label{rmk:prop of fractional calculus}
\begin{enumerate}[(i)]

\item
\label{exchange of fractional integral and derivative}
For any $\alpha,\beta \geq0$, $D^\alpha_t D^\beta_t\varphi = D^{\alpha+\beta}_t\varphi$  and
\begin{equation*}
D^\alpha_t I^\beta_t \varphi = D^{\alpha-\beta}_t  \varphi 1_{\alpha > \beta} + I_t^{\beta-\alpha}\varphi 1_{\alpha\leq\beta}.
\end{equation*}
Additionally, if $\alpha\in(0,1)$, $I^{1-\alpha}_t \varphi$ is absolutely continuous, and $I^{1-\alpha}_t\varphi(0) = 0$, then the following equality holds:
\begin{equation*}
I^\alpha_t D^\alpha_t\varphi(t) = \varphi(t).
\end{equation*}

\item 
By the definition of fractional derivatives, if $\varphi(0) = \varphi^{(1)}(0) = \cdots = \varphi^{(n -1)}(0) = 0$, then $D^\alpha_t\varphi = \partial_t^\alpha \varphi$.

\end{enumerate}

\end{remark}

\vspace{2mm}

Below we recall the definitions and properties of stochastic Banach spaces (for more detail, see \cite{grafakos2009modern,kim2020sobolev,kry1999analytic,krylov2008lectures}). The solution space $\cH_{p}^{\gamma}(T)$ and embedding theorems for $\cH_{p}^{\gamma}(T)$ are suggested.

\begin{definition}
Let $p>1$ and $\gamma \in \mathbb{R}$. The space $H_p^\gamma=H_p^\gamma(\bR^d)$ is the set of all tempered distributions $u$ on $\mathbb{R}$ such that
$$ \| u \|_{H_p^\gamma} := \left\| (1-\Delta)^{\gamma/2} u \right\|_{L_p} = \left\| \cF^{-1}\left[ (1+|\xi|^2)^{\gamma/2}\cF(u)(\xi)\right]\right\|_{L_p}<\infty.
$$
Similarly, $H_p^\gamma(l_2) = H_p^\gamma(\bR^d;l_2)$ is a space of $l_2$-valued functions $g=(g^1,g^2,\cdots)$ such that 
$$ \|g\|_{H_{p}^\gamma(l_2)}:= \left\| \left| (1-\Delta)^{\gamma/2} g \right|_{l_2}\right\|_{L_p} = \left\| \left|\cF^{-1}\left[ \left( 1+|\xi|^2 \right)^{\gamma/2}\cF(g)(\xi)\right] \right|_{l_2} \right\|_{L_p}
< \infty. 
$$
\end{definition}

\begin{remark}
\label{Kernel}
Let $d\in\bN$ and $\gamma \in (0,\infty)$. A nonnegative smooth function $R_{\gamma}(x)$ exists on $\bR^d$ such that, for $u\in C_c^\infty(\bR^d)$,
\begin{equation*}
\left( (1-\Delta)^{-\gamma/2}\, u \right)(x)= \int_{\bR^d} R_\gamma(y)u(x-y)dy
\end{equation*}
 and 
\begin{equation*}
\left| R_\gamma(x) \right| \leq N A_{\gamma,d}(x)1_{|x|\leq2} + Ne^{-|x|/2}1_{|x|\geq2},
\end{equation*}
where $N = N(\gamma,d)$ is a positive constant and
\begin{equation*}
A_{\gamma,d}(x) = 
\begin{cases}
|x|^{\gamma-d}+1+O(|x|^{\gamma-d+2})\quad&\mbox{for}\quad0<\gamma<d, \\
\log(2/|x|)+1+O(|x|^{2})\quad&\mbox{for}\quad \gamma=d, \\
1+O(|x|^{\gamma-d})\quad&\mbox{for}\quad \gamma > d.
\end{cases}
\end{equation*}
For more detail, see \cite[Proposition 1.2.5]{grafakos2009modern}.
\end{remark}

We introduce the space of point-wise multipliers in $H_p^\gamma$.

\begin{definition}
\label{def_pointwise_multiplier}
Fix $\gamma\in\bR$ and $\alpha\in[0,1)$ such that $\alpha = 0$ if $\gamma\in\bZ$ and $\alpha>0$ if $|\gamma|+\alpha$ is not an integer. Define
\begin{equation*}
\begin{aligned}
B^{|\gamma|+\alpha} = 
\begin{cases}
B(\bR) &\quad\text{if } \gamma = 0, \\
C^{|\gamma|-1,1}(\bR) &\quad\text{if $\gamma$ is a nonzero integer}, \\
C^{|\gamma|+\alpha}(\bR) &\quad\text{otherwise},
\end{cases}
\end{aligned}
\end{equation*}
\begin{equation*}
\begin{aligned}
B^{|\gamma|+\alpha}(\ell_2) = 
\begin{cases}
B(\bR,\ell_2) &\quad\text{if } \gamma = 0, \\
C^{|\gamma|-1,1}(\bR,\ell_2) &\quad\text{if $\gamma$ is a nonzero integer}, \\
C^{|\gamma|+\alpha}(\bR,\ell_2) &\quad\text{otherwise},
\end{cases}
\end{aligned}
\end{equation*}
where $B(\bR)$ is the space of bounded Borel functions on $\bR$, $C^{|\gamma|-1,1}(\bR)$ represents the space of $|\gamma|-1$ times continuous differentiable functions whose derivatives of the $(|\gamma|-1)$th order derivative are Lipschitz continuous, and $C^{|\gamma|+\alpha}$ is the real-valued H\"older spaces. The space $B(\ell_2)$ denotes a function space with $\ell_2$-valued functions instead of real-valued function spaces.

\end{definition}

Below we collect the properties of Bessel potential spaces.

\begin{lemma}
\label{lemma:properties of bessel potential spaces}

Let $\gamma \in \bR$ and $p>1$. 
\begin{enumerate}[(i)]
\item 
\label{dense_subset_bessel_potential} 
The space  $C_c^\infty(\bR^d)$ is dense in $H_{p}^{\gamma}$. 

\item
\label{item:Holder embedding for bessel potential space}
Let $\gamma - d/p = n+\nu$ for some $n=0,1,\cdots$ and $\nu\in(0,1]$. Then, for any  $k\in\{ 0,1,\cdots,n \}$, we have
\begin{equation} 
| D^k u |_{C(\bR^d)} + | D^n u |_{\cC^\nu(\bR^d)} \leq N \| u \|_{H_{p}^\gamma},
\end{equation}
where $\cC^\nu(\bR^d)$ is the Zygmund space.

\item
\label{bounded_operator}
The operator $D_i:H_p^{\gamma}\to H_p^{\gamma+1}$ is bounded. Moreover, for any $u\in H_p^{\gamma+1}$,
$$ \left\| D^i u \right\|_{H_p^\gamma} \leq N\| u \|_{H_p^{\gamma+1}},
$$
where $N = N(\gamma,p)$.

\item 
\label{bounded operator2}
For $\gamma_1,\gamma_2\in\bR$, and $u\in H_{p}^{\gamma_1+\gamma_2}$, we have
\begin{equation*}
\| \Delta^{\gamma_1/2}u \|_{H_p^{\gamma_2}} \leq N\| u \|_{H_p^{\gamma_1+\gamma_2}},
\end{equation*}
where $N = N(\gamma_1,\gamma_2)$.

\item 
\label{multiplier theorem}
For $\gamma\in(0,1)$, and $u\in H_{p}^{\gamma}$, we have
\begin{equation*}
\| (1- \Delta^{\gamma})u \|_{L_p} \leq N\left( \| u \|_{L_p} + \| (-\Delta)^{\gamma} u \|_{L_p} \right),
\end{equation*}
where $N = N(\gamma,p)$.

\item 
\label{item:isometry}
For any $\mu,\gamma\in\bR$, the operator $(1-\Delta)^{\mu/2}:H_p^\gamma\to H_p^{\gamma-\mu}$ is an isometry.

\item
\label{multi_ineq}
Let 
\begin{equation*} 
\begin{gathered}
\ep\in[0,1],\quad p_i\in(1,\infty),\quad\gamma_i\in \bR,\quad i=0,1,\\
\gamma=\ep\gamma_1+(1-\ep)\gamma_0,\quad1/p=\ep/p_1+(1-\ep)/p_0.
\end{gathered}
\end{equation*}
Then, we have
\begin{equation*}
\|u\|_{H^\gamma_{p}} \leq \|u\|^{\ep}_{H^{\gamma_1}_{p_1}}\|u\|^{1-\ep}_{H^{\gamma_0}_{p_0}}.
\end{equation*}

\item \label{pointwise_multiplier}
Let $u\in H_p^\gamma$. Then, we have
\begin{equation*}
\| au \|_{H_p^\gamma} \leq N\| a \|_{B^{|\gamma|+\alpha}}\| u \|_{H_p^\gamma}\quad\text{and}\quad\| bu \|_{H_p^\gamma(\ell_2)} \leq N\| b \|_{B^{|\gamma|+\alpha}(\ell_2)}\| u \|_{H_p^\gamma},
\end{equation*}
where $N = N(\gamma,p)$ and $B^{|\gamma|+\alpha},B^{|\gamma|+\alpha}(\ell_2)$ are introduced in Definition \ref{def_pointwise_multiplier}.

\end{enumerate}

\end{lemma}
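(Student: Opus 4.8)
The plan is to treat Lemma \ref{lemma:properties of bessel potential spaces} as a catalogue of standard facts about the Bessel potential space $H_p^\gamma=(1-\Delta)^{-\gamma/2}L_p$, and to establish each item by one of three mechanisms: a direct computation from the definition together with the semigroup identity $(1-\Delta)^{a/2}(1-\Delta)^{b/2}=(1-\Delta)^{(a+b)/2}$; a reduction to $L_p$-boundedness of a single Fourier multiplier via the Mikhlin--H\"ormander theorem; or an appeal to a named embedding/interpolation/multiplication theorem from \cite{grafakos2009modern,kry1999analytic}.

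I would dispatch the mechanical parts first. Part (vi) is immediate from the definition and the semigroup property, since $\|(1-\Delta)^{\mu/2}u\|_{H_p^{\gamma-\mu}}=\|(1-\Delta)^{(\gamma-\mu)/2}(1-\Delta)^{\mu/2}u\|_{L_p}=\|(1-\Delta)^{\gamma/2}u\|_{L_p}=\|u\|_{H_p^\gamma}$, so the map is a surjective isometry. Parts (iii), (iv), and (v) all reduce, after rewriting the claimed inequality in terms of $L_p$-norms using part (vi), to checking that an explicit symbol is an $L_p$-multiplier. Concretely, (iii) needs $m(\xi)=i\xi_i(1+|\xi|^2)^{-1/2}$, (iv) needs $m(\xi)=|\xi|^{\gamma_1}(1+|\xi|^2)^{-\gamma_1/2}$ (for $\gamma_1\ge 0$), and (v) amounts to the comparability $(1+|\xi|^2)^\gamma\asymp 1+|\xi|^{2\gamma}$ for $\gamma\in(0,1)$, so that the ratio is a bounded smooth multiplier. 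Each of these symbols is smooth away from the origin with the correct homogeneity at $0$ and at $\infty$, hence satisfies $|\xi|^{|\kappa|}|D^\kappa m(\xi)|\le N$ for $|\kappa|\le\lfloor d/2\rfloor+1$, and Mikhlin--H\"ormander gives the desired $L_p$-bound. Part (i) is routine: Schwartz functions are dense in $H_p^\gamma$, and one approximates a Schwartz function in the $H_p^\gamma$-norm by $C_c^\infty$ functions through mollification and smooth truncation. Part (vii) is the complex-interpolation identity $[H_{p_0}^{\gamma_0},H_{p_1}^{\gamma_1}]_\epsilon=H_p^\gamma$ with the attendant norm inequality, which I would simply cite.

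The two substantive items are (ii) and (viii). For the sharp embedding (ii), with $\gamma-d/p=n+\nu$, I would control the derivatives $D^k u$ for $k\le n$ using the kernel representation and pointwise bound of Remark \ref{Kernel}, and handle the top-order Zygmund seminorm by the standard Besov/Hölder--Zygmund embedding for Bessel potential spaces; this is exactly the statement recorded in \cite{kry1999analytic,grafakos2009modern}. For the pointwise-multiplier estimate (viii), the natural route is Triebel's theory of multiplication in function spaces: for integer $\gamma$ the bound follows from the Leibniz rule and boundedness of the Riesz transforms, while for fractional or negative $\gamma$ it requires a paraproduct decomposition, and the spaces $B^{|\gamma|+\alpha}$ of Definition \ref{def_pointwise_multiplier} are precisely those for which the estimate is valid.

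I expect part (viii) to be the main obstacle if a self-contained argument were demanded, since for negative or non-integer $\gamma$ one cannot reduce $\|au\|_{H_p^\gamma}$ to a single multiplier computation and must instead carry out a careful paraproduct/commutator analysis; consequently I would present it as a citation rather than reprove it. All remaining parts are either immediate from the definition or standard multiplier estimates, so the proof amounts to organizing these references and verifying the handful of symbols above.
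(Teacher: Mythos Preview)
Your proposal is correct and actually provides more content than the paper's own proof, which consists entirely of citations: items (i), (iii), (vi), (vii) to Krylov's \emph{Lectures on elliptic and parabolic equations in Sobolev spaces}; items (ii) and (iv) to Triebel's \emph{Theory of Function Spaces}; item (v) to Grafakos; and item (viii) to Lemma~5.2 of \cite{kry1999analytic}. Your approach differs in that you sketch self-contained arguments for the mechanical items---(vi) by the semigroup identity, and (iii)--(v) by identifying the relevant Fourier symbol and invoking Mikhlin--H\"ormander---while reserving citations for the genuinely deep items (ii) and (viii). This buys transparency: a reader sees immediately that (iii), (iv), (v), (vi) are one-line multiplier facts rather than black boxes. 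The paper's bare-citation route is shorter and avoids having to verify the Mikhlin condition for the symbol $|\xi|^{\gamma_1}(1+|\xi|^2)^{-\gamma_1/2}$ in (iv), whose behaviour at the origin requires a moment's care (and indeed you restrict to $\gamma_1\ge 0$, which is all that is used elsewhere in the paper). Both routes are valid; yours is more informative, the paper's more economical.
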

\begin{proof}
The above results are well known. For \eqref{dense_subset_bessel_potential}, \eqref{bounded_operator}, \eqref{item:isometry}, and \eqref{multi_ineq}, see Theorems 13.3.7 (i), 13.8.1, 13.3.7 (ii), and Exercise 13.3.20 of \cite{krylov2008lectures}, respectively.
In the case of \eqref{item:Holder embedding for bessel potential space} and \eqref{bounded operator2}, see \cite{triebel2010theory}. For \eqref{multiplier theorem}, see Theorems 1.3.6 and 1.3.8 of \cite{grafakos2009modern}. For \eqref{pointwise_multiplier}, we refer the reader to \cite[Lemma 5.2]{kry1999analytic}.

\end{proof}

\begin{definition}[Stochastic Banach spaces]
Let $\tau\leq T$ be a bounded stopping time, $p \geq 2$, and $\gamma\in\bR$. Set $\opar0,\tau\cbrk:=\{ (\omega,t):0<t\leq \tau(\omega) \}$ and define
\begin{gather*}
\bH_{p}^{\gamma}(\tau) := L_p\left(\opar0,\tau\cbrk, \mathcal{P}, dP \times dt ; H_{p}^\gamma\right), \\
\bH_{p}^{\gamma}(\tau,l_2) := L_p\left(\opar0,\tau\cbrk,\mathcal{P}, dP \times dt;H_{p}^\gamma(l_2)\right),\\
U_p^{\gamma} := L_p\left( \Omega,\cF_0,H_p^{\gamma} \right).
\end{gather*}
We write $u\in \bH_p^\gamma$ if $u\in \bH_p^{\gamma}(\tau)$ exists for any bounded stopping time $\tau$. Additionally, if $\gamma = 0$, then we use $\bL$ instead of $\bH$, $\|f\|_{\bL_p(\tau)} := \|f\|_{\bH_p^0(\tau)}$.
The norm of each space is defined naturally, for example,
\begin{equation*}
\|f\|_{\bH_p^\gamma(\tau)} := \left( \bE\int_0^\tau \| f(t,\cdot) \|_{H_p^\gamma}^p \, dt\right)^{1/p}.
\end{equation*}

\end{definition}

\vspace{2mm}

Lemma \ref{prop of time fractional integral and stochastic integral} exhibits the relation between the stochastic and fractional integrals, which is employed when $I_t^\alpha$ or $D_t^\alpha$ is applied to the stochastic part of the SPDEs. 

\begin{lemma}
\label{prop of time fractional integral and stochastic integral}
Let $T<\infty$ be a constant.
\begin{enumerate}[(i)]
\item 
\label{summation time fractional integral change}
Let $\alpha \geq 0$ and $h\in L_2(\Omega\times[0,T],\cP;l_2)$. Then, the equality
\begin{equation*}
I^\alpha\left( \sum_{k=1}^\infty \int_0^\cdot h^k(s) dw_s^k \right)(t) = \sum_{k=1}^\infty \left( I^\alpha\int_0^\cdot h^k(s)dw_s^k \right)(t)
\end{equation*}
holds for all $t\leq T$ almost surely and in $L_2(\Omega\times[0,T])$, where the series on both sides converge in probability.

\item 
\label{convergence of sequence with time fractional integral and stochastic integral}
If $\alpha\geq0$ and $h_n\to h$ in $L_2(\Omega\times[0,T],\cP;l_2)$ as $n\to\infty$, then
\begin{equation*}
\sum_{k = 1}^\infty \left( I^\alpha\int_0^\cdot h_n^k dw_s^k \right)(t) \to \sum_{k=1}^\infty \left( I^\alpha \int_0^\cdot h^k dw_s^k \right)(t)
\end{equation*}
in probability uniformly on $[0,T]$.

\item 
\label{time fractional derivative and stochastic integral}
If $\alpha > 1/2$ and $h\in L_2(\Omega\times[0,T],\cP;l_2)$, then $\left( I^\alpha \sum_{k=1}^\infty\int_0^\cdot h^k (s) dw_s^k \right)(t)$ is differentiable in $t$ and
\begin{equation*}
\frac{\partial}{\partial t}\left( I^\alpha \sum_{k=1}^\infty\int_0^\cdot h^k (s) dw_s^k \right)(t) = \frac{1}{\Gamma(\alpha)}\sum_{k=1}^\infty \int_0^t (t-s)^{\alpha-1}h^k(s)dw_s^k
\end{equation*}
(a.e.) on $\Omega\times[0,T]$.

\end{enumerate}

\end{lemma}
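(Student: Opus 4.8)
The plan is to handle all three parts through one mechanism: the operator $I^\alpha$ acts on the stochastic process only through the nonnegative, integrable kernel $(t-s)^{\alpha-1}$, so uniform-in-time $L_2(\Omega)$ control of the underlying martingale transfers directly to $I^\alpha$ applied to it. Throughout I write $M_N(s) = \sum_{k=1}^N \int_0^s h^k(r)\,dw_r^k$ and $M(s) = \sum_{k=1}^\infty \int_0^s h^k(r)\,dw_r^k$. Since $M - M_N$ is a continuous $L_2$-martingale, Doob's maximal inequality and the It\^o isometry give $\bE\sup_{s\le T}|M(s)-M_N(s)|^2 \le 4\,\bE\sum_{k>N}\int_0^T|h^k(s)|^2\,ds \to 0$, which in particular shows $M$ is well defined and $M_N\to M$ uniformly in $L_2(\Omega)$. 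The case $\alpha = 0$ is trivial since $I^0 = \mathrm{Id}$, so I assume $\alpha > 0$.

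For (i), since $I^\alpha$ is linear and the finite-sum interchange $I^\alpha M_N = \sum_{k=1}^N I^\alpha\int_0^\cdot h^k\,dw^k$ is automatic, it suffices to pass to the limit. Using the pointwise bound $\sup_{t\le T}|I^\alpha \varphi(t)| \le \frac{1}{\Gamma(\alpha)}\big(\sup_{s\le T}|\varphi(s)|\big)\int_0^T s^{\alpha-1}\,ds = \frac{T^\alpha}{\Gamma(\alpha+1)}\sup_{s\le T}|\varphi(s)|$, I would estimate $\bE\sup_{t\le T}|I^\alpha M(t) - I^\alpha M_N(t)|^2 \le N(\alpha,T)\,\bE\sup_{s\le T}|M(s)-M_N(s)|^2 \to 0$. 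Hence the right-hand series converges uniformly in $L_2(\Omega)$ to $I^\alpha M$, which yields convergence in probability uniformly on $[0,T]$ and, along a subsequence, a.s. uniformly; this establishes the identity for all $t \le T$ a.s. and in $L_2(\Omega\times[0,T])$. Part (ii) is the same estimate applied to $M_n - M$ with $M_n(s) = \sum_k\int_0^s h_n^k\,dw^k$: by the It\^o isometry $\bE|M_n(T)-M(T)|^2 = \|h_n - h\|_{L_2(\Omega\times[0,T];l_2)}^2 \to 0$, and Doob followed by the boundedness of $I^\alpha$ on sup-norms gives $\bE\sup_{t\le T}|I^\alpha M_n(t) - I^\alpha M(t)|^2 \to 0$, i.e. convergence in probability uniformly on $[0,T]$.

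For (iii) I would first rewrite $I^\alpha M$ so that the kernel carries the differentiation. Applying the stochastic Fubini theorem to interchange the $ds$-integral with the stochastic integral,
\begin{equation*}
I^\alpha M(t) = \frac{1}{\Gamma(\alpha)}\int_0^t (t-s)^{\alpha-1}\Big(\int_0^s h(r)\,dw_r\Big)\,ds = \frac{1}{\Gamma(\alpha+1)}\int_0^t (t-s)^{\alpha}h(s)\,dw_s,
\end{equation*}
where $\int_0^s h\,dw = \sum_k\int_0^s h^k\,dw^k$ and the interchange for the series is justified by part (i) together with the single-process stochastic Fubini. Next I would set $g(\rho) := \alpha\int_0^\rho (\rho-r)^{\alpha-1}h(r)\,dw_r$; this is exactly where $\alpha > 1/2$ enters, since $\bE\int_0^T|g(\rho)|^2\,d\rho \le N(\alpha,T)\,\|h\|_{L_2(\Omega\times[0,T];l_2)}^2 < \infty$ demands $2\alpha - 2 > -1$ for the kernel $(\rho-r)^{\alpha-1}$ to be square-integrable in $r$. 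A second application of stochastic Fubini gives $\int_0^t g(\rho)\,d\rho = \alpha\int_0^t\big(\int_r^t(\rho-r)^{\alpha-1}d\rho\big)h(r)\,dw_r = \int_0^t(t-s)^\alpha h(s)\,dw_s$, so that $\Gamma(\alpha+1)\,I^\alpha M(t) = \int_0^t g(\rho)\,d\rho$. Since $g \in L_2(\Omega\times[0,T]) \subset L_1(\Omega\times[0,T])$, for a.e. $\omega$ the map $t\mapsto\int_0^t g(\rho)\,d\rho$ is absolutely continuous, hence differentiable for a.e. $t$ with derivative $g(t)$; therefore $\frac{\partial}{\partial t}I^\alpha M(t) = \frac{1}{\Gamma(\alpha+1)}g(t) = \frac{1}{\Gamma(\alpha)}\int_0^t(t-s)^{\alpha-1}h(s)\,dw_s$ a.e. on $\Omega\times[0,T]$, using $\Gamma(\alpha+1) = \alpha\Gamma(\alpha)$.

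I expect the main obstacle to be part (iii): the two crux points are the correct application of the stochastic Fubini theorem to a series of It\^o integrals against the singular deterministic kernel, and the verification that the candidate derivative $g$ is well defined and square-integrable, which is precisely the content of the restriction $\alpha > 1/2$. Parts (i) and (ii) are comparatively routine once the uniform $L_2$-martingale convergence and the boundedness of $I^\alpha$ on sup-norms are in place.
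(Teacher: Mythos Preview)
Your argument is correct. The paper itself does not give a proof but simply cites Lemmas~3.1 and~3.3 of Chen--Kim--Kim (2015), and your approach is precisely the standard one carried out there: control parts~(i) and~(ii) via Doob's maximal inequality combined with the elementary bound $\sup_{t\le T}|I^\alpha\varphi(t)|\le \frac{T^\alpha}{\Gamma(\alpha+1)}\sup_{s\le T}|\varphi(s)|$, and for part~(iii) use stochastic Fubini to rewrite $I^\alpha M(t)=\frac{1}{\Gamma(\alpha+1)}\int_0^t(t-s)^\alpha h(s)\,dw_s$, then identify this as the ordinary Lebesgue integral of $g(\rho)=\frac{\alpha}{\Gamma(\alpha+1)}\int_0^\rho(\rho-r)^{\alpha-1}h(r)\,dw_r$. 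Your identification of $\alpha>1/2$ as exactly the condition making $g\in L_2(\Omega\times[0,T])$ (hence $t\mapsto\int_0^t g$ absolutely continuous a.s.) is the key point of~(iii).

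Two minor remarks that would tighten the write-up. First, in the first application of stochastic Fubini in~(iii), the sufficient hypothesis $\int_0^t\big(\int_0^s(t-s)^{2(\alpha-1)}|h(r)|_{l_2}^2\,dr\big)^{1/2}ds = \int_0^t(t-s)^{\alpha-1}\big(\int_0^s|h(r)|_{l_2}^2\,dr\big)^{1/2}ds<\infty$ actually holds for every $\alpha>0$, so this step does not yet use $\alpha>1/2$; it is only the square-integrability of $g$ and the second Fubini that require it. Second, when you pass from the single-process Fubini back to the series, you implicitly use that $\sum_{k\le N}\int_0^t(t-r)^\alpha h^k(r)\,dw_r^k\to\int_0^t(t-r)^\alpha h(r)\,dw_r$ in $L_2(\Omega)$, which follows from the It\^o isometry since the kernel $(t-r)^\alpha$ is bounded; stating this explicitly would remove any ambiguity in the phrase ``justified by part~(i) together with the single-process stochastic Fubini.''
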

\begin{proof}
See Lemmas 3.1 and 3.3 of \cite{chen2015fractional}.
\end{proof}

\vspace{2mm}

Fix a small $\kappa_0>0$. For $\alpha\in(0,1)$ and $\beta < \alpha+1/2$, set
\begin{equation}
\label{def of c0}
c_0:= \frac{(2\beta-1)_+}{\alpha} + \kappa_0 1_{\beta = 1/2}.
\end{equation}

Next, we introduce the solution spaces (for more detail, see Definitions 2.9 and 2.12 in \cite{kim2020sobolev}).

\begin{definition}[Solution spaces] 
\label{def:solution space}
Let $\tau\leq T$ be a bounded stopping time, $\alpha\in(0,1)$, $\beta < \alpha+1/2$, $\gamma\in \bR$, and $ p\geq 2$.

\begin{enumerate}[(i)]
\item 
For $u\in \bH_p^{\gamma}(\tau)$, we write $u\in\cH^{\gamma}_p(\tau)$ if $u_0\in U_p^{\gamma}$, $f\in
\bH_{p}^{\gamma-2}(\tau)$, and $g\in
\bH_{p}^{\gamma-2+c_0}(\tau,l_2)$ such that 
\begin{equation*}
\begin{aligned}
\partial_t^\alpha u(t,x) = f(t,x) + \partial_t^\beta\int_0^t g^k(s,x)dw_s^k,\quad 0<t\leq \tau; \quad u(0,\cdot) = u_0
\end{aligned}
\end{equation*}
in the sense of distribution. In other words, for any $\phi\in \cS$, the equality
\begin{equation}
\label{eq:def_of_sol}
\begin{aligned}
(u(t,\cdot),\phi) &= (u_0, \phi) + I_t^\alpha ( f,\phi ) + I_t^{\alpha-\beta} \sum_{k=1}^\infty  \int_0^t ( g^k(s,\cdot), \phi) dw_s^k \\
\end{aligned}
\end{equation}
holds for a.e. $(\omega,t)\in\Omega\times[0,\tau]$. 
If $\alpha-\beta \in(-1/2,0)$, we regard $I_t^{\alpha-\beta}$ as $\frac{\partial}{\partial t}I_t^{\alpha-\beta + 1}$. The norm in $\cH_{p}^{\gamma}(\tau)$ is defined as follows:
\begin{equation}
\| u \|_{\cH_{p}^{\gamma}(\tau)} :=  \| u \|_{\mathbb{H}_{p}^{\gamma}(\tau)} + \| u_0 \|_{U_{p}^{\alpha,\gamma}} + \inf_{f,g}\left\{ \| f \|_{\mathbb{H}_{p}^{\gamma-2}(\tau)} + \| g \|_{\mathbb{H}_{p}^{\gamma-2+c_0}(\tau,l_2)} \right\}.
\end{equation}

\item
We say $u \in \cH_{p,loc}^{\gamma}(\tau)$ if there exists a sequence $\tau_n\uparrow \tau$ such that $u\in \cH_{p}^{\gamma}(\tau_n)$ for each $n$. We write $u = v$ in $\cH_{p,loc}^{\gamma}(\tau)$ if a sequence of bounded stopping times $\tau_n\uparrow \tau$ exists such that $u = v$ in $\cH_{p}^{\gamma}(\tau_n)$ for each $n$. We omit $\tau$ if $\tau = \infty$. In other words,  $\cH_{p,loc}^{\gamma}=\cH_{p,loc}^{\gamma}(\infty).$
\end{enumerate}

\end{definition} 

\begin{remark}
If $\alpha-\beta \geq 0$, the stochastic part of \eqref{eq:def_of_sol} is considered
\begin{equation*}
\begin{aligned}
I_t^{\alpha-\beta} \sum_{k=1}^\infty  \int_0^t ( g^k(s,\cdot), \phi) dw_s^k
&=  \sum_{k=1}^\infty I_t^{\alpha-\beta} \int_0^t ( g^k(s,\cdot), \phi) dw_s^k.
\end{aligned}
\end{equation*}
Otherwise, if $\alpha-\beta \in(-1/2,0)$, we regard $I_t^{\alpha-\beta}$ as $\frac{\partial}{\partial t}I_t^{\alpha-\beta + 1}$. Then, by Lemma \ref{prop of time fractional integral and stochastic integral} \eqref{time fractional derivative and stochastic integral}, the stochastic part of \eqref{eq:def_of_sol} is
\begin{equation*}
\begin{aligned}
I_t^{\alpha-\beta}\left( \sum_{k=1}^\infty  \int_0^t ( g^k(s,\cdot), \phi) dw_s^k \right) 
&= \frac{\partial}{\partial t}\left( I^{\alpha-\beta+1} \sum_{k=1}^\infty  \int_0^t ( g^k(s,\cdot), \phi) dw_s^k \right) \\
&= \frac{1}{\Gamma(\alpha-\beta+1)}\sum_{k=1}^\infty\int_0^t(t-s)^{\alpha-\beta+1}(g^k(s,\cdot),\phi)dw_s^k.
\end{aligned}
\end{equation*}

\end{remark}

Below, we provide the properties of the solution space $\cH_p^\gamma(\tau)$.

\begin{theorem}
\label{prop of sol space}
Let $\tau\leq T$ be a bounded stopping time.
\begin{enumerate}[(i)]

\item
For $\nu\in\bR$, the map $(1-\Delta)^{\nu/2}:\cH_p^{\gamma+2}(\tau)\to\cH_p^{\gamma-\nu+2}(\tau)$ is an isometry.

\item 
If $\gamma\in\bR$, $\alpha\in(0,1)$, $\beta<\alpha+1/2$, and $p \geq 2$, then $\cH_p^{\gamma}(\tau)$ is a Banach space with the norm $\| \cdot \|_{\cH_p^{\gamma}(\tau)}$.

\end{enumerate}
\end{theorem}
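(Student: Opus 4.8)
The plan is to prove the two assertions separately: statement~(i) will follow from the mapping properties of the Bessel potential operator, while statement~(ii) will be reduced to completeness via a summability argument combined with the limit theorems for time-fractional stochastic integrals.

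For~(i), the key observation is that $(1-\Delta)^{\nu/2}$ is a purely spatial Fourier multiplier independent of $(\omega,t)$; hence it commutes with $\partial_t^\alpha$, $\partial_t^\beta$, the Riemann--Liouville integral $I_t^\alpha$, and the series of It\^o integrals in \eqref{eq:def_of_sol}, and it maps $\cS$ into $\cS$. Thus if $u\in\cH_p^{\gamma+2}(\tau)$ has data $(u_0,f,g)$, I would check that $v:=(1-\Delta)^{\nu/2}u$ satisfies the same equation with data $((1-\Delta)^{\nu/2}u_0,(1-\Delta)^{\nu/2}f,(1-\Delta)^{\nu/2}g)$, so that $v\in\cH_p^{\gamma-\nu+2}(\tau)$. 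Since $(1-\Delta)^{-\nu/2}$ is a two-sided inverse, the map $(f,g)\mapsto((1-\Delta)^{\nu/2}f,(1-\Delta)^{\nu/2}g)$ is a bijection between the admissible data for $u$ and for $v$. Lemma \ref{lemma:properties of bessel potential spaces} \eqref{item:isometry} then preserves each term, e.g. $\|(1-\Delta)^{\nu/2}f\|_{\bH_p^{\gamma-\nu}(\tau)}=\|f\|_{\bH_p^\gamma(\tau)}$ and similarly for the $g$- and $u_0$-terms (the latter fiberwise in $\omega$), so taking the infimum over the identified data yields $\|v\|_{\cH_p^{\gamma-\nu+2}(\tau)}=\|u\|_{\cH_p^{\gamma+2}(\tau)}$, which is the claimed isometry.

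For~(ii), I would first verify that $\|\cdot\|_{\cH_p^\gamma(\tau)}$ is a genuine norm: homogeneity is immediate, the triangle inequality follows from adding admissible data for $u$ and $v$ to produce admissible data for $u+v$, and $\|u\|_{\cH_p^\gamma(\tau)}\geq\|u\|_{\bH_p^\gamma(\tau)}$ forces $u=0$ whenever the norm vanishes. For completeness, given a Cauchy sequence I would pass to a subsequence $\{u_n\}$ with $\|u_{n+1}-u_n\|_{\cH_p^\gamma(\tau)}<2^{-n}$ and, for each $n$, choose data $(f_n,g_n)$ for the difference $u_{n+1}-u_n$ with $\|f_n\|_{\bH_p^{\gamma-2}(\tau)}+\|g_n\|_{\bH_p^{\gamma-2+c_0}(\tau,l_2)}<2^{-n+1}$. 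Summing these telescoping increments onto the data of $u_1$ produces limits $u_0$, $u$, $f$, $g$ in $U_p^\gamma$, $\bH_p^\gamma(\tau)$, $\bH_p^{\gamma-2}(\tau)$, and $\bH_p^{\gamma-2+c_0}(\tau,l_2)$, respectively.

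The decisive step is to confirm that the limiting data actually solves \eqref{eq:def_of_sol}, i.e. that one may pass to the limit in $(u_n(t),\phi)=(u_{0,n},\phi)+I_t^\alpha(f_n,\phi)+I_t^{\alpha-\beta}\sum_k\int_0^t(g_n^k,\phi)dw_s^k$ for every $\phi\in\cS$. The first three terms converge using $u_n\to u$, $u_{0,n}\to u_0$, and the $L_q$-boundedness \eqref{ineq:lp ineq for time fractional int} of $I_t^\alpha$. The stochastic term is the crux: the hypothesis $\beta<\alpha+1/2$ gives $\alpha-\beta>-1/2$, so Lemma \ref{prop of time fractional integral and stochastic integral} applies---parts \eqref{summation time fractional integral change}--\eqref{convergence of sequence with time fractional integral and stochastic integral} handle the convergence when $\alpha-\beta\geq0$, and for $\alpha-\beta\in(-1/2,0)$ the reading $I_t^{\alpha-\beta}=\tfrac{\partial}{\partial t}I_t^{\alpha-\beta+1}$ together with part \eqref{time fractional derivative and stochastic integral} (valid since $\alpha-\beta+1>1/2$) yields the kernel representation and the required convergence in probability. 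This identifies $u\in\cH_p^\gamma(\tau)$ with data $(u_0,f,g)$; the tails $(\sum_{k\geq n}f_k,\sum_{k\geq n}g_k)$ bound $\|u-u_n\|_{\cH_p^\gamma(\tau)}$, so the subsequence---and hence the full Cauchy sequence---converges in $\cH_p^\gamma(\tau)$. I expect this limit passage in the stochastic term, especially reconciling the negative-order case $\alpha-\beta\in(-1/2,0)$ with the distributional definition, to be the main obstacle.
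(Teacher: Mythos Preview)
Your proposal is correct and follows essentially the same approach as the paper. The paper's own proof simply defers to \cite[Theorem 2.14]{kim2020sobolev}, noting that the argument carries over verbatim with $\tau$ in place of $T$; what you have written is precisely the standard argument that reference contains---the isometry via Lemma \ref{lemma:properties of bessel potential spaces} \eqref{item:isometry} and the bijection of admissible data for~(i), and the summable-subsequence completeness argument with Lemma \ref{prop of time fractional integral and stochastic integral} handling the limit in the stochastic term for~(ii).
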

\begin{proof}
Despite the fact that the definition of $U_p^\gamma$ is distinct, the proof is a repeat of \cite[Theorem 2.14]{kim2020sobolev} with $\tau$ instead of $T$.

\end{proof}

Next, we suggest the H\"older embedding theorems for $u\in\cH_p^\gamma(\tau)$. The proof of Theorem \ref{thm:embedding theorem for sol space} is contained in Section \ref{section: proof of continuity of the solution in t}.

\begin{theorem} 
\label{thm:embedding theorem for sol space}
Let $\tau\leq T$ be the bounded stopping time, $\gamma\in\mathbb{R}$, $\alpha\in(0,1)$, $\beta<\alpha+1/2$, and 
\begin{equation} 
\label{condition of p to be a solution}
    p >  2\vee \frac{1}{\alpha}\vee \frac{1}{\alpha-\beta+1/2}.
\end{equation}
Suppose $u\in \cH_p^{\gamma}(\tau)$. Assume $\alpha$, $\beta$, $\mu$, and $\nu$ satisfy
\begin{equation}
\label{conditions of mu and nu}
\frac{1}{\alpha p} < \mu < \frac{(\alpha(\nu+c_0/2)-\beta)\wedge1/2+1/2}{\alpha}\quad\text{and}\quad\frac{1}{\alpha p} < \nu <  1 - \frac{c_0}{2},
\end{equation}
where $c_0$ is the constant introduced in \eqref{def of c0}.
Then, for $u\in C^{\alpha\mu - 1/p}([0,\tau];H_p^{\gamma-2\nu})$ almost surely and
\begin{equation}
\label{item:time holder continuity of sol}
\bE\| u \|^p_{C^{\alpha\mu-1/p}\left([0,\tau];H_p^{\gamma-2\nu}\right)} \leq N\| u \|^p_{\cH_p^{\gamma}(\tau)},
\end{equation}
where $N = N(\alpha,\beta,\gamma,d,p,T)$.

\end{theorem}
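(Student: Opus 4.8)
The plan is to reduce the whole statement to the single increment estimate
$$
\bE\|u(t)-u(s)\|_{H_p^{\gamma-2\nu}}^p \le N|t-s|^{\alpha\mu p}\,\|u\|_{\cH_p^\gamma(\tau)}^p,\qquad 0\le s\le t\le\tau,
$$
and then to invoke the Kolmogorov continuity theorem. The admissible lower bound $\mu>\tfrac{1}{\alpha p}$ is exactly what forces $\alpha\mu p>1$, so Kolmogorov (in the single time variable, with values in the Banach space $H_p^{\gamma-2\nu}$) upgrades this moment bound to almost sure membership in $C^{\alpha\mu-1/p}([0,\tau];H_p^{\gamma-2\nu})$ together with the bound \eqref{item:time holder continuity of sol}; the loss of $1/p$ in the Hölder exponent is precisely the Kolmogorov (Garsia--Rodemich--Rumsey) loss. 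Before proving the increment estimate I would normalize $\gamma$: since $(1-\Delta)^{s/2}$ is an isometry on the solution spaces (Theorem \ref{prop of sol space}(i)) and commutes with $I_t^\alpha$, $\partial_t^\beta$, and the stochastic integral, it suffices to treat one convenient value of $\gamma$, after which the general case follows by applying $(1-\Delta)^{(\gamma-2\nu)/2}$.

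Next I would use the defining identity
$$
u(t) = u_0 + I_t^\alpha f + I_t^{\alpha-\beta}\int_0^t g^k\,dw_s^k
$$
(valid by the definition of $\cH_p^\gamma(\tau)$ and Remark \ref{rmk:prop of fractional calculus}) to split $u(t)-u(s)$ into a deterministic part from $I_t^\alpha f$ and a stochastic part from the fractional integral of the Itô integral, the constant $u_0$ dropping out of the increment. For the deterministic part I would estimate $\|I_t^\alpha f-I_s^\alpha f\|_{H_p^{\gamma-2\nu}}$ from the kernel representation, decomposing $\int_0^t(t-r)^{\alpha-1}f(r)\,dr-\int_0^s(s-r)^{\alpha-1}f(r)\,dr$ into the near-diagonal piece $\int_s^t$ and the regularizing piece $\int_0^s[(t-r)^{\alpha-1}-(s-r)^{\alpha-1}]$. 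Using $f\in\bH_p^{\gamma-2}$, Hölder's inequality in time, and the interpolation inequality of Lemma \ref{lemma:properties of bessel potential spaces}(vii) (interpolating the $H_p^{\gamma-2}$ time-regularity of $I_t^\alpha f$ against its $H_p^{\gamma}$ boundedness), this piece contributes a factor $|t-s|^{\alpha\mu p}$ for $\mu$ in the stated range.

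The main obstacle is the stochastic part $I_t^{\alpha-\beta}\int_0^\cdot g^k\,dw^k$, which dictates the sharp constraint on $\mu$, in particular the $\wedge\,1/2$, the additive $1/2$, and the $\beta$-dependence. Here I would first rewrite the fractional integral of the Itô integral as a single stochastic integral against the kernel $(t-r)^{\alpha-\beta-1}$ via Lemma \ref{prop of time fractional integral and stochastic integral}, using the representation $\tfrac{\partial}{\partial t}I_t^{\alpha-\beta+1}$ in the borderline regime $\alpha-\beta\in(-1/2,0)$, where $\alpha-\beta+1>1/2$ so that part (iii) applies. I would then apply the Burkholder--Davis--Gundy inequality to the $p$-th moment of the increment in $H_p^{\gamma-2\nu}$, converting the martingale increment into the $L_{p/2}$-norm of the time-integrated square of the kernel times $\|g\|_{H_p^{\gamma-2+c_0}(l_2)}$. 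The extra $c_0$ spatial smoothing of $g$ is spent, through interpolation, to buy temporal Hölder regularity, while BDG itself supplies the martingale ceiling of $1/2$ in time; a careful accounting of the singular kernel integrals over the near-diagonal and regularizing regions yields the exponent $[\alpha(\nu+c_0/2)-\beta]\wedge\tfrac12+\tfrac12$ and hence the admissible range of $\mu$. The delicate points are controlling these kernel integrals uniformly in $s<t$ and separately handling the three cases $\alpha-\beta>0$, $\alpha-\beta=0$, and $\alpha-\beta\in(-1/2,0)$. Combining the deterministic and stochastic increment bounds and applying the Kolmogorov continuity theorem then gives both the almost sure continuity and the moment bound \eqref{item:time holder continuity of sol}.
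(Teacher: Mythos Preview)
Your proposal has a genuine gap at the very first step, and it is the step that carries the whole argument. You want to estimate $\|u(t)-u(s)\|_{H_p^{\gamma-2\nu}}$ directly from the defining identity
\[
u(t)=u_0+I_t^\alpha f+I_t^{\alpha-\beta}\int_0^t g^k\,dw_s^k,
\]
but neither $I_t^\alpha$ nor $I_t^{\alpha-\beta}$ provides any \emph{spatial} smoothing: they act only in the time variable. Since $f\in\bH_p^{\gamma-2}(\tau)$ and $g\in\bH_p^{\gamma-2+c_0}(\tau,l_2)$, the pieces $I_t^\alpha f$ and $I_t^{\alpha-\beta}\int g\,dw$ live, at best, in $H_p^{\gamma-2}$ and $H_p^{\gamma-2+c_0}$ respectively. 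The target space is $H_p^{\gamma-2\nu}$ with $\nu<1-c_0/2$, hence $\gamma-2\nu>\gamma-2+c_0>\gamma-2$: you need strictly \emph{more} spatial regularity than the data carry, and there is nothing in the raw representation to supply it. Your proposed fix, ``interpolating the $H_p^{\gamma-2}$ time-regularity of $I_t^\alpha f$ against its $H_p^\gamma$ boundedness,'' does not work because $I_t^\alpha f$ is not in $H_p^\gamma$; only the full $u$ is, and only in $L_p$ of time, so there is no pointwise-in-$t$ bound $\sup_t\|u(t)\|_{H_p^\gamma}$ available to interpolate against. The same problem hits the stochastic term: after BDG you are left needing $\|g(r)\|_{H_p^{\gamma-2\nu}(l_2)}$, which you do not control.

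What the paper does instead is precisely to manufacture the missing spatial smoothing. It first rewrites the equation as $\partial_t^\alpha u=\Delta u+\bar f+\partial_t^\beta\int\bar g\,dw$ with $\bar f=f-\Delta u$, and then represents $u$ through the fundamental solution of $\partial_t^\alpha-\Delta$:
\[
u=T_t^1u_0+T_t^2\bar f+T_t^3\bar g,
\]
where $T_t^2,T_t^3$ are convolutions with the kernels $q_{\alpha,1},q_{\alpha,\beta}$. These kernels satisfy the scaling and pointwise bounds of Lemma~\ref{lemma:properties of kernel}, so that $(-\Delta)^{\theta/\alpha}q_{\alpha,\beta}(t,\cdot)$ is integrable with an explicit $t$-power blow-up; this is exactly the parabolic trade of time singularity for spatial derivatives that your argument lacks. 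The increment estimates (Lemma~\ref{holder semi norm control lemma}) are then obtained via the Slobodetskii embedding (Lemma~\ref{slobodetskii's embedding}) and a case-by-case kernel computation in $\alpha-\beta$, after which Kolmogorov is applied as you suggest. Your outline becomes correct once you replace the raw $I_t^\alpha$, $I_t^{\alpha-\beta}$ representation by the heat-kernel operators $T_t^1,T_t^2,T_t^3$ and use their smoothing estimates.
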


\begin{remark}
Theorem \ref{thm:embedding theorem for sol space} is consistent with the previous results (\cite[Theorem 7.2]{kry1999analytic}). In other words, if we let $\alpha,\beta \uparrow 1$ in Theorem \ref{thm:embedding theorem for sol space}, conditions \eqref{condition of p to be a solution} and \eqref{conditions of mu and nu}, and the result \eqref{item:time holder continuity of sol} approach those of the case of $\alpha = \beta = 1$.
\end{remark}

By combining Lemma \ref{lemma:properties of bessel potential spaces} \eqref{item:Holder embedding for bessel potential space} and Theorem \ref{thm:embedding theorem for sol space}, we have the H\"older embedding results of solution space $\cH_p^{(2-c_0-d/2)\wedge 1}(\tau)$ which is a preparation to obtain the maximum H\"older regularity of solutions.

\begin{corollary}
\label{corollary:continuous embedding of solution}
Let $\tau\leq T$ be a bounded stopping time, $\alpha\in(0,1)$, $\beta < \alpha + 1/2$, and $0 < \gamma <  (2-c_0-d/2)\wedge1$, where $c_0$ is introduced in \eqref{def of c0}. Suppose $p$ satisfies \eqref{condition of p to be a solution} and $u\in \cH_p^{\gamma}(\tau)$. If $\alpha,\beta,\gamma,\nu,d,$ and $p$ satisfy
\begin{equation}
\label{condition of nu for holder embedding 1}
\frac{1}{\alpha p} < \mu < \frac{(\alpha(\nu+c_0/2)-\beta)\wedge1/2+1/2}{\alpha}\quad\text{and}\quad\frac{1}{\alpha p} < \nu <  \left( \frac{\gamma}{2} - \frac{d}{2p} \right)\wedge \left(1 - \frac{c_0}{2}\right),
\end{equation}
then  $u\in C^{\alpha\mu - 1/p}([0,\tau];C^{\gamma-2\nu-d/p})$ almost surely and
$$\bE\| u \|^p_{C^{\alpha\mu-1/p}\left([0,\tau];C^{\gamma-2\nu-d/p}(\bR^d)\right)} \leq N\| u \|^p_{\cH_p^{\gamma}(\tau)},
$$
where $N = N(\alpha,\beta,\gamma,d,p,T)$.

\end{corollary}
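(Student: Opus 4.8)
The plan is to obtain the statement by concatenating two ready-made embeddings: the temporal H\"older embedding for the solution space (Theorem \ref{thm:embedding theorem for sol space}) and the spatial Sobolev--H\"older embedding for Bessel potential spaces (Lemma \ref{lemma:properties of bessel potential spaces} \eqref{item:Holder embedding for bessel potential space}). First I would check that the hypotheses of Theorem \ref{thm:embedding theorem for sol space} are satisfied under \eqref{condition of nu for holder embedding 1}. The restriction on $\mu$ in \eqref{condition of nu for holder embedding 1} coincides with that in \eqref{conditions of mu and nu}, and the range $\frac{1}{\alpha p} < \nu < (\gamma/2 - d/(2p)) \wedge (1 - c_0/2)$ has upper bound at most $1 - c_0/2$, so condition \eqref{conditions of mu and nu} holds. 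Hence Theorem \ref{thm:embedding theorem for sol space} applies and yields $u \in C^{\alpha\mu - 1/p}([0,\tau]; H_p^{\gamma-2\nu})$ almost surely together with
\[
\bE\|u\|^p_{C^{\alpha\mu-1/p}([0,\tau]; H_p^{\gamma-2\nu})} \le N\|u\|^p_{\cH_p^\gamma(\tau)}.
\]

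Next I would verify that the spatial embedding is available at smoothness index $\gamma - 2\nu$. The constraint $\nu < \gamma/2 - d/(2p)$ in \eqref{condition of nu for holder embedding 1} is precisely $\gamma - 2\nu - d/p > 0$, while $\gamma < (2 - c_0 - d/2)\wedge 1 \le 1$ together with $\nu > 0$ and $d/p > 0$ forces $\gamma - 2\nu - d/p < 1$. Thus $\gamma - 2\nu - d/p \in (0,1)$, so it is non-integral, the exponent $n$ in Lemma \ref{lemma:properties of bessel potential spaces} \eqref{item:Holder embedding for bessel potential space} is $0$, and the Zygmund space there coincides with the classical H\"older space. Consequently that lemma furnishes a bounded linear embedding $\iota\colon H_p^{\gamma-2\nu} \to C^{\gamma-2\nu-d/p}(\bR^d)$ with $|v|_{C^{\gamma-2\nu-d/p}} \le N\|v\|_{H_p^{\gamma-2\nu}}$ for every $v \in H_p^{\gamma-2\nu}$.

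Finally I would transport the temporal H\"older bound through $\iota$. Since $\iota$ is linear and bounded, for a fixed $\omega$ both the supremum and the H\"older seminorm in time transfer: we have $\sup_{t\le\tau}|u(t)|_{C^{\gamma-2\nu-d/p}} \le N\sup_{t\le\tau}\|u(t)\|_{H_p^{\gamma-2\nu}}$ and $|u(t)-u(s)|_{C^{\gamma-2\nu-d/p}} = |\iota(u(t)-u(s))|_{C^{\gamma-2\nu-d/p}} \le N\|u(t)-u(s)\|_{H_p^{\gamma-2\nu}}$, whence
\[
|u|_{C^{\alpha\mu-1/p}([0,\tau];C^{\gamma-2\nu-d/p})} \le N\,|u|_{C^{\alpha\mu-1/p}([0,\tau];H_p^{\gamma-2\nu})}.
\]
Raising to the $p$-th power, taking expectations, and combining with the displayed bound from Theorem \ref{thm:embedding theorem for sol space} gives the claimed estimate; the almost sure membership $u \in C^{\alpha\mu-1/p}([0,\tau];C^{\gamma-2\nu-d/p})$ follows from the a.s. statement of the same theorem together with the continuity of $\iota$.

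The argument is essentially a composition of two established embeddings, so I do not anticipate a genuine obstacle. The only points requiring care are the bookkeeping that confirms $\gamma - 2\nu - d/p \in (0,1)$ (so that the spatial embedding is both valid and lands in a true H\"older space rather than merely a Zygmund space) and the elementary observation that a fixed bounded spatial operator commutes harmlessly with the temporal H\"older seminorm.
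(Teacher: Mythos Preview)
Your proposal is correct and follows essentially the same approach as the paper: apply Theorem \ref{thm:embedding theorem for sol space} to obtain the temporal H\"older regularity with values in $H_p^{\gamma-2\nu}$, then apply the spatial Sobolev--H\"older embedding from Lemma \ref{lemma:properties of bessel potential spaces} \eqref{item:Holder embedding for bessel potential space}. The paper's proof is simply the one-line chain $\bE\| u \|^p_{C^{\alpha\mu-1/p}([0,\tau];C^{\gamma-2\nu-d/p})} \leq \bE\| u \|^p_{C^{\alpha\mu-1/p}([0,\tau];H_p^{\gamma-2\nu})} \leq N\| u \|^p_{\cH_p^{\gamma}(\tau)}$; your version spells out the same composition with more care about the parameter checks.
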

\begin{proof}
For the proof, we employ Lemma \ref{lemma:properties of bessel potential spaces} \eqref{item:Holder embedding for bessel potential space} and Theorem \ref{thm:embedding theorem for sol space}. Then, we have
\begin{equation*}
\bE\| u \|^p_{C^{\alpha\mu-1/p}\left([0,\tau];C^{\gamma-2\nu-d/p}(\bR^d)\right)} \leq \bE\| u \|^p_{C^{\alpha\mu-1/p}\left([0,\tau];H_p^{\gamma-2\nu}(\bR^d)\right)} \leq N\| u \|^p_{\cH_p^{\gamma}(\tau)}.
\end{equation*}
Thus, the corollary is proved.
\end{proof}

\vspace{2mm}

\section{Main Results}
\label{Main results}

This section presents the uniqueness, existence, $L_p$-regularity, and H\"older regularity of the solution to the following equation:
\begin{equation}
\label{eq:target equation}
\begin{aligned}
\partial_t^\alpha u &=  Lu + \bar b^i u u_{x^i} + \partial^{\beta}_t \sum_k \int_0^t  \sigma(u) \eta^k dw_s^k,\quad t>0;\quad u(0,\cdot) = u_0,
\end{aligned}
\end{equation}
where $Lu= a^{ij}u_{x^ix^j} + b^{i}u_{x^i} + cu$. The coefficients $a^{ij}$, $b^{i}$, and $c$ are $\cP\times \cB(\bR^d)$-measurable, $\bar b^i$ is $\cP\times \cB(\bR^{d-1})$-measurable, and $a^{ij}$, $b^i$, $c$, and $\bar b^i$ (and their derivatives) are uniformly bounded (see Assumption \ref{assumptions_on_coefficients_deterministic_part}). Additionally, we assume the coefficient $\bar b^i$ is independent of $x^i$. Indeed, because $\bar b^i$ is independent of $x^i$, we can employ the fundamental theorem of calculus to control the nonlinear term $\bar b^i u u_{x^i}$ (see Remark \ref{rmk: bar b is independent of xi}). Moreover, the diffusion coefficient $\sigma(u)$ is dominated by an $\bL_p$ function $h$ (see Assumption \ref{assumptions_on_coefficients_stochastic_part}) and it is used to obtain a uniform $L_p$ bound of the local solutions (see Remark \ref{rmk: sigma is bounded by h}).

In Theorem \ref{main theorem}, we obtain the existence and uniqueness of a solution in $\cH_p^{\gamma}$, where $\gamma\in (0,2-c_0-d/2)\wedge 1$. The components of equation \eqref{eq:target equation} affect the properties of the solution $u$. For example, if $\alpha$, $\beta$, $d$, and $p$ are given, the regularity $\gamma$ is determined. Remarks \ref{eq component explanations 0}, \ref{eq component explanations 1}, and \ref{eq component explanations 2} provide explanations for these relations.

Additionally, we have the maximal H\"older regularity of the solution in Corollary \ref{maximal holder regularity}. We employ the H\"older embedding theorem for solution spaces (Corollary \ref{corollary:continuous embedding of solution}). Furthermore, depending on the range of $\beta$, the behavior of the H\"older regularity of the solution in time varies. In detail, when $\beta \geq 1/2$, then the H\"older regularity of the solution in space is $\alpha/2$ times of the H\"older regularity of the solution in time. Moreover, if we consider the case $\alpha,\beta \uparrow 1$, then the H\"older regularity in time and space approaches $1/4$ and $1/2$, which are the results of the SPDEs driven by space-time white noise (e.g. \cite[Remark 8.7]{kry1999analytic} or \cite[Corollary 3.1]{han2022regularity}). In the case of $\beta < 1/2$, $1/2-\beta$ of the H\"older regularity in time is obtained due to the regularity of the stochastic integral (Remark \ref{regularity comparision}).

\vspace{2mm}

The following are assumptions on coefficients.

\begin{assumption} 
\label{assumptions_on_coefficients_deterministic_part}
\begin{enumerate}[(i)]

\item 
The coefficients $a^{ij} = a^{ij}(t,x)$, $b^i = b^i(t,x)$, and $c = c(t,x)$ are $\cP\times\cB(\bR^d)$-measurable.

\item 
The coefficient $\bar b^i(t,\bar x^i) = \bar b^i(t,x^1,\dots,x^{i-1},x^{i+1},\dots,x^d)$ is $\cP\times\cB(\bR^{d-1})$-measurable.

\item 
There exists $K>0$ such that 
\begin{equation}
\label{ellipticity_of_leading_coefficients_white_noise_in_time} 
K^{-1}|{\xi}|^2 \leq a^{ij}(t,x)\xi^i\xi^j \leq  K|{\xi}|^2\quad \text{for all}\quad (\omega,t,x)\in\Omega\times[0,\infty)\times\bR^d, \quad \xi \in \bR^d, 
\end{equation}
and
\begin{equation} 
\label{boundedness_of_deterministic_coefficients_white_noise_in_time} 
\sum_{i,j}\left| a^{ij}(t,\cdot) \right|_{C^{2}(\bR^d)} + \sum_{i}\left| b^{i}(t,\cdot) \right|_{C^{2}(\bR^d)} + |c(t,\cdot)|_{C^{2}(\bR^d)} + \sum_{i} \left|\bar b^i(t,\cdot)\right|_{C^{2}(\bR^{d-1})} \leq K
\end{equation}
for all $(\omega,t)\in\Omega\times[0,\infty)$.
\end{enumerate}
\end{assumption}

\begin{remark}
\label{rmk: bar b is independent of xi}

To prove the existence of a global solution, we need to acquire a uniform $L_p$ bound of the local solutions. Thus, we separate the local solutions into two parts: noise-dominating and nonlinear-dominating parts. In this remark, we consider the nonlinear-dominating parts related to $\bar b^i uu_{x^i}$. 

If coefficient $\bar b^i$ is independent of $x^i$, coefficient $\bar b^i$ can be taken out of the integral for $x^i$. Then, by the fundamental theorem of calculus to $x^i$, the nonlinear term $\bar b^i uu_{x^i}$ is eliminated in the $L_p$ estimate of the nonlinear-dominating part of the local solutions. Thus, the nonlinear-dominating parts are controlled by the initial data and diffusion coefficient $\sigma(u)$ (for more information, see Lemma \ref{lemma:nonlinear control}).

\end{remark}

To introduce the assumptions on the diffusion coefficient, we may assume $p\geq 2$.

\begin{assumption}[$p$]
\label{assumptions_on_coefficients_stochastic_part}
\begin{enumerate}[(i)]

\item 
The coefficient $\sigma(t,x,u)$ is $\cP\times\cB(\bR^d)\times\cB(\bR)$-measurable.

\item 
There exists a constant $K$ such that 
\begin{equation*}
|\sigma(t,x,u) - \sigma(t,x,v)| \leq K |u-v| \quad \text{for all}\quad (\omega,t,x)\in\Omega\times[0,\infty)\times\bR^d,\quad u,v\in\bR.
\end{equation*}

\item 
\label{function bound of the diffusion coefficient}
There exists a $\cP\times\cB(\bR^d)$-measurable function $h\in \bL_p$ such that
\begin{equation}
\label{bound of the stochastic coefficient}
|\sigma(t,x,u)| \leq |h(t,x)|\quad \text{for all}\quad (\omega,t,x)\in\Omega\times[0,\infty)\times\bR^d,\quad u\in\bR.
\end{equation}
\end{enumerate}
\end{assumption}

\begin{remark}
\label{rmk: sigma is bounded by h}

As mentioned in Remark \ref{rmk: bar b is independent of xi}, we divide the local solutions into two parts, and the nonlinear-dominating parts are controlled by the initial data $u_0$ and diffusion coefficients $\sigma(u)$. Then, to deal with the noise-dominating term and the terms including $\sigma(u)$, we employ the function $h(t,x)$ introduced in Assumption \ref{assumptions_on_coefficients_stochastic_part} $(p)$ \eqref{function bound of the diffusion coefficient}. Indeed, the terms related to the diffusion coefficient $\sigma(u)$ are controlled by the initial data and $h$ so that a uniform $L_p$ bound of $u$ is obtained (see Lemmas \ref{lemma:noise dominant part} and \ref{lemma:nonlinear control}).
\end{remark}

Next, we introduce the main results.

\vspace{2mm}

\begin{theorem}
\label{main theorem}
Let 
\begin{equation}
\label{condi alpha,beta,d,gamma}
\alpha\in (0,1), \quad \beta < \frac{3}{4}\alpha + \frac{1}{2}, \quad d<4 - 2c_0,\quad 0 < \gamma < (2-c_0-d/2)\wedge1
\end{equation}
and
\begin{equation}
\label{conditions of p in theorem}
p = 2^k\quad\text{for some}\quad k\in\bN\quad\text{and}\quad p>2\vee\frac{1}{\alpha}\vee \frac{1}{\alpha-\beta+1/2}\vee\frac{2+\alpha d}{\alpha\gamma} \vee \frac{d}{1-\gamma},
\end{equation}
where $c_0$ are the constants introduced in \eqref{def of c0}.
Suppose Assumptions \ref{assumptions_on_coefficients_deterministic_part} and \ref{assumptions_on_coefficients_stochastic_part} ($p$) hold.
If $u_0\in U_p^{\gamma}$, then there exists a unique solution $u\in \cH_{p,loc}^{\gamma}$ satisfying \eqref{eq:target equation}. Furthermore, for $\mu$ and $\nu$ satisfying \eqref{condition of nu for holder embedding 1}, and bounded stopping time $\tau\leq T$, we have
\begin{equation}
\label{time and space holder regularity}
u\in C^{\alpha\mu - 1/p}([0,\tau];C^{\gamma-2\nu-d/p})\quad \text{and} \quad\| u \|_{C^{\alpha\mu - \frac{1}{p}}\left([0,\tau];C^{\gamma-2\nu - d/p}\right)}  < \infty
\end{equation}
almost surely.
\end{theorem}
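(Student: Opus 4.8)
The plan is to prove $L_p$-solvability for a \emph{fixed} admissible $p$ first and only afterwards upgrade to Hölder regularity and to the $p$-independence needed for the sharp exponents. Throughout I would treat the Burgers drift $\bar b^i u u_{x^i}$ and the multiplicative noise coefficient $\sigma(u)$ as perturbations of the linear time-fractional model equation $\partial_t^\alpha u=\Delta u+f+\partial_t^\beta\int_0^t g^k\,dw_s^k$, for which the solution space $\cH_p^\gamma(\tau)$ carries a complete $L_p$-theory: unique solvability together with an a priori estimate bounding $\|u\|_{\cH_p^\gamma(\tau)}$ by $\|u_0\|_{U_p^\gamma}+\|f\|_{\bH_p^{\gamma-2}(\tau)}+\|g\|_{\bH_p^{\gamma-2+c_0}(\tau,l_2)}$. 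The structural constraints $\beta<\tfrac34\alpha+\tfrac12$ and $d<4-2c_0$, together with the lower bounds on $p$ in \eqref{conditions of p in theorem}, are exactly what guarantees, for $\gamma\in(0,(2-c_0-d/2)\wedge1)$, that the free terms generated by the nonlinearity and the noise land in the correct source spaces $\bH_p^{\gamma-2}$ and $\bH_p^{\gamma-2+c_0}(l_2)$. In particular, since $\gamma-2+c_0<-d/2$, the space--time white-noise series for $g=(\sigma(u)\eta^k)_k$ is summable in that negative scale and is controlled by $\|\sigma(u)\|_{\bL_p}\le\|h\|_{\bL_p}$ via Assumption \ref{assumptions_on_coefficients_stochastic_part}.

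First I would construct \emph{local} solutions. Because $u u_{x^i}$ is only locally Lipschitz, I cut the equation off by multiplying the nonlinear drift by a smooth truncation of a suitable norm of $u$, so that the truncated drift becomes globally Lipschitz as a map into $\bH_p^{\gamma-2}$, and I control the noise through the Lipschitz and growth hypotheses on $\sigma$. A contraction-mapping argument in $\cH_p^\gamma(\tau)$, built on the linear a priori estimate and the continuity of $u\mapsto\bar b^i u u_{x^i}$ and $u\mapsto\sigma(u)$, yields a unique solution of the truncated problem; removing the truncation along an increasing sequence of stopping times $\tau_m\uparrow\tau$ produces a candidate $u\in\cH_{p,loc}^\gamma(\tau)$ solving \eqref{eq:target equation} up to the putative blow-up time $\tau$. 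This is the content I would package as Lemma \ref{lemma:existence uniqueness reg of local solution} and the pasting construction of Remark \ref{def_u}.

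The heart of the argument is a \emph{uniform} $L_p$ bound forcing $\tau=\infty$. Since neither It\^o's formula nor the maximum principle is available for \eqref{eq:target equation}, I would split each local solution into a noise-dominating part and a nonlinear-dominating part. The noise part is bounded in $\cH_p^\gamma$ uniformly in $m$ using $|\sigma(u)|\le|h|$ with $h\in\bL_p$ and the $c_0$ regularity gained from the stochastic integral. For the nonlinear part, the decisive fact is that $\bar b^i$ does not depend on $x^i$: in the $L_p$ energy quantity the contribution of $\bar b^i u u_{x^i}$ becomes (up to a constant) an exact $x^i$-derivative of $|u|^p u$, so the fundamental theorem of calculus makes it integrate to zero in the leading term (Remark \ref{rmk: bar b is independent of xi}). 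What replaces It\^o's formula is the chain-type inequality of Lemma \ref{lemma: chain ineq}, which controls the Caputo derivative of $\|u\|_{\bL_p}^p$; feeding the resulting differential inequality into the fractional Gr\"onwall inequality (Theorem \ref{time fractional integral gronwall ineq}) closes the estimate and bounds $\sup_{t\le T}\bE\|u(t)\|_{L_p}^p$ in terms of $u_0$, $h$, and the structural constants alone. This rules out blow-up, so the local solution is promoted to a global $u\in\cH_{p,loc}^\gamma$, and uniqueness for fixed $p$ follows from the same Gr\"onwall machinery applied to a difference of two solutions.

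Finally I would derive the Hölder regularity \eqref{time and space holder regularity}. For fixed admissible $p$, once $u\in\cH_p^\gamma(\tau)$ is established, Corollary \ref{corollary:continuous embedding of solution} immediately gives $u\in C^{\alpha\mu-1/p}([0,\tau];C^{\gamma-2\nu-d/p})$ with finite norm for every $\mu,\nu$ satisfying \eqref{condition of nu for holder embedding 1}. To reach the sharp exponents I must take $p$ as large as possible, so the last ingredient is uniqueness across different $p$ (Theorem \ref{uniqueness_in_p}): the solutions produced for distinct admissible $p$ coincide, so a single $u$ inherits the regularity obtained from every large $p$, and letting $p\to\infty$ in \eqref{condition of nu for holder embedding 1} recovers the stated exponents. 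The main obstacle is the uniform $L_p$ bound of the third paragraph: the quadratic Burgers term is supercritical for a naive energy estimate, and the absence of an It\^o formula for the Caputo-fractional stochastic integral removes the standard way of differentiating $\|u\|_{L_p}^p$. Everything hinges on combining the $x^i$-independence cancellation with the chain inequality and the fractional Gr\"onwall lemma, and the thresholds $\beta<\tfrac34\alpha+\tfrac12$ and $d<4-2c_0$ are precisely what make this closing argument succeed.
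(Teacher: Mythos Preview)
Your overall strategy matches the paper's: truncate to get local solutions $u_m$ (Lemma \ref{lemma:existence uniqueness reg of local solution}), paste them via Remark \ref{def_u}, split $u_m=v+w_m$ where $v$ absorbs the noise (Lemma \ref{lemma:noise dominant part}), control $w_m$ via the chain inequality and fractional Gr\"onwall (Lemmas \ref{lemma: chain ineq}, \ref{lemma:nonlinear control}), and conclude non-explosion (Lemma \ref{lemma:key_lemma}); then apply Corollary \ref{corollary:continuous embedding of solution} for the H\"older statement.

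Two points of correction. First, the chain-type inequality is applied \emph{pathwise} to $w_m=u_m-v$, not to $u$ itself, and the resulting Gr\"onwall bound on $\|w_m(t,\cdot)\|_{L_p}^p$ carries \emph{random} constants depending on $\sup_{t,x}|v|$ and $\sup_t\|v\|_{L_p}$; one does not obtain a bound on $\sup_t\bE\|u(t)\|_{L_p}^p$ directly. The paper therefore introduces auxiliary stopping times $\tau^1(S),\tau^2(S)$ that freeze these $v$-quantities at level $S$, bounds $\|u_m\|_{\cH_p^\gamma(\tau_m^0(S))}$ uniformly in $m$ (for fixed $S$), and then uses Chebyshev plus $P(\tau^i(S)<T)\to 0$ as $S\to\infty$ to conclude $P(\sup|u|>R)\to 0$. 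Also, uniqueness for fixed $p$ is not done by Gr\"onwall on a difference: the paper localizes two solutions via stopping times so that both satisfy the \emph{same} cut-off equation with $\rho_m$, then invokes the uniqueness already proved for that semilinear equation.

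Second, your final paragraph oversteps the scope of Theorem \ref{main theorem}. The statement is for a \emph{fixed} admissible $p$, and \eqref{time and space holder regularity} follows immediately from Corollary \ref{corollary:continuous embedding of solution} once $u\in\cH_{p,loc}^\gamma$ is known. The uniqueness-in-$p$ argument and the passage to sharp exponents by letting $p\to\infty$ are the content of Theorem \ref{uniqueness_in_p} and Corollary \ref{maximal holder regularity}, not of Theorem \ref{main theorem}.
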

\begin{proof}
See {\bf{Proof of Theorem \ref{main theorem}}} in Section \ref{section:proof of main thm}.
\end{proof}

\begin{remark}
\label{eq component explanations 0}

\begin{enumerate}[(i)]

\item 
\label{condition of alpha}
We assume 
\begin{equation*}
\alpha \in (0,1)
\end{equation*}
because an inequality acting like the chain rule is employed to deal with the nonlinear-dominating part of the local solution (see Lemma \ref{lemma: chain ineq}).

\item 
The conditions 
\begin{equation*}
\beta < 3\alpha/4 + 1/2\quad\text{and}\quad d < 4-2c_0
\end{equation*}
are expected to obtain the uniqueness and existence of solutions to SPDEs with Caputo time fractional derivatives and space-time white noise even for the semilinear case. For example, see \cite[Section 7]{kim2019sobolev}. Additionally, observe that the choice of $\alpha$ and $\beta$ allows $d = 1,2,3$, where $c_0$ is the constant introduced in \eqref{def of c0}.
\end{enumerate}
\end{remark}

\begin{remark}
\label{eq component explanations 1}
\begin{enumerate}[(i)]

\item 
For the existence and uniqueness of local solutions, we impose
\begin{equation}
\label{regularity bound 1}
\gamma\in (0,2-c_0-d/2).
\end{equation}
Heuristically, if $u$ is a measurable, continuous, and bounded solution to equation \eqref{eq:target equation}, then for given $T<\infty$, we can define a bounded stopping time as follows:
\begin{equation*}
\tau_m := \inf\left\{ t\geq 0: \sup_{x\in\bR^d}|u(t,x)| \geq m  \right\}\wedge T.
\end{equation*}
Then, the solution $u$ satisfies the localized version of equation \eqref{eq:target equation} on $(0,\tau_m)$. In other words, 
\begin{equation}
\label{localized target equation}
\partial_t^\alpha u =  Lu + \frac{1}{2}\bar b^i \left( (|u|\wedge m)^2 \right)_{x^i} + \partial^{\beta}_t \sum_k \int_0^t  \sigma(u) \eta^k dw_s^k
\end{equation}
holds on $0<t<\tau_m$ with $u(0,\cdot) = u_0$. Then, as \eqref{localized target equation} is a semilinear equation, \eqref{regularity bound 1} has to be satisfied by \cite[Theorem 7.1]{kim2019sobolev} (for more detail, see \cite[Section 7]{kim2019sobolev} and \cite[Section 5]{kim2020sobolev}.

\item 
The following condition
\begin{equation}
\label{regularity bound 2}
\gamma\in(0,1)
\end{equation}
is assumed due to the nonlinear term $\bar b^i uu_{x^i}$ lowering the regularity of the solution. Even for SBEs ($\alpha = \beta = 1$), the condition in \eqref{regularity bound 2} is required (for more information, see \cite{gyongy1998existence,gyongy1999stochastic,han2022lp,han2022regularity,lewis2018stochastic}).

\end{enumerate}
\end{remark}

\begin{remark} 
\label{eq component explanations 2}
\begin{enumerate}[(i)]

\item 

To obtain the local solution, we employ the $L_p$ theory for the semilinear equation (see \cite[Theorem 5.1]{kry1999analytic}). When we control the nonlinear term $\bar b^i uu_{x^i}$ in the $L_p$ estimate, the kernel of $(1-\Delta)^{-\frac{\gamma-1}{2}}$ has to be controlled. Hence, 
\begin{equation*}
p>\frac{d}{1-\gamma}
\end{equation*}
is imposed (see Lemma \ref{lemma:existence uniqueness reg of local solution}).

\item 
We require $\bR$-valued continuous solutions to consider the cut-off version of equation \eqref{eq:target equation}. Therefore, we assume
\begin{equation*}
 p > 2\wedge \frac{1}{\alpha}\wedge\frac{1}{\alpha-\beta+1/2}\wedge \frac{2 + \alpha d}{\alpha\gamma}
\end{equation*}
which is required to apply the H\"older embedding theorem for $\cH_p^\gamma$ (see Theorem \ref{thm:embedding theorem for sol space} and Corollary \ref{corollary:continuous embedding of solution}).

\item 
As mentioned in Remark \ref{eq component explanations 0} \eqref{condition of alpha}, we employ an inequality similar to the chain rule. To apply \eqref{integration by part} instead of chain rule for the Caputo fractional derivative, we assume
\begin{equation*}
p = 2^k
\end{equation*}
for some $k\in\bN$.
\end{enumerate}

\end{remark}

To achieve the maximal H\"older regularity, we require the uniqueness of the solution in $p$.

\begin{theorem}
\label{uniqueness_in_p}
Suppose all the conditions of Theorem \ref{main theorem} hold. Let $u\in \cH_{p,loc}^{\gamma}$ be the solution of equation \eqref{eq:target equation} introduced in Theorem \ref{main theorem}. If $q > p$, $u_0\in U_{q}^{\alpha,\gamma}$, and Assumption \ref{assumptions_on_coefficients_stochastic_part} ($q$) hold, then $u\in \cH_{q,loc}^{\gamma}$.
\end{theorem}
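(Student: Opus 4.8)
The plan is to upgrade the integrability of the \emph{already constructed} solution by a localization-and-freezing argument, rather than by re-running the existence proof of Theorem~\ref{main theorem} at the exponent $q$ (which would need $q$ to be a power of $2$). First I would use the H\"older regularity \eqref{time and space holder regularity} to conclude that $u$ is, almost surely, continuous and locally bounded in $(t,x)$. Letting $\sigma_k\uparrow\infty$ be the localizing sequence of bounded stopping times with $u\in\cH_p^{\gamma}(\sigma_k)$, I would set
$$ \tau_n:=\inf\left\{ t\geq0:\sup_{x\in\bR^d}|u(t,x)|\geq n \right\}\wedge\sigma_n, $$
which is a bounded stopping time with $\tau_n\uparrow\infty$ almost surely, $u\in\cH_p^{\gamma}(\tau_n)$, and $\sup_{x}|u(t,x)|\leq n$ on $\opar0,\tau_n\cbrk$. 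On $\opar0,\tau_n\cbrk$ the function $u$ solves the \emph{linear} equation
$$ \partial_t^\alpha u = Lu + f + \partial_t^\beta\sum_k\int_0^t g^k\,dw_s^k,\qquad u(0,\cdot)=u_0, $$
with the frozen data $f:=\tfrac12\bar b^i(u^2)_{x^i}$ (using that $\bar b^i$ is independent of $x^i$) and $g^k:=\sigma(u)\eta^k$.

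The heart of the matter is to verify that these frozen data lie in the $q$-spaces. For the noise term, since $\gamma<2-c_0-d/2$ forces $\gamma-2+c_0<-d/2$, the $l_2$-valued Sobolev estimate for the white-noise coefficient (valid precisely in this range; cf. \cite[Section 8]{kry1999analytic} and \cite[Section 7]{kim2019sobolev}) together with Assumption~\ref{assumptions_on_coefficients_stochastic_part}~$(q)$ yields
$$ \|g(t,\cdot)\|_{H_q^{\gamma-2+c_0}(l_2)}\leq N\|\sigma(u(t,\cdot))\|_{L_q}\leq N\|h(t,\cdot)\|_{L_q}, $$
so $g\in\bH_q^{\gamma-2+c_0}(\tau_n,l_2)$ because $h\in\bL_q$. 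The nonlinear term is the main obstacle, and here the sup-bound on $\opar0,\tau_n\cbrk$ is decisive. Using Lemma~\ref{lemma:properties of bessel potential spaces}~\eqref{bounded_operator} and~\eqref{pointwise_multiplier} together with $\gamma-1<0$ (so that $L_q\hookrightarrow H_q^{\gamma-1}$), I would estimate
$$ \|f(t,\cdot)\|_{H_q^{\gamma-2}}\leq N\|u^2(t,\cdot)\|_{H_q^{\gamma-1}}\leq N\|u^2(t,\cdot)\|_{L_q}\leq Nn\,\|u(t,\cdot)\|_{L_q}, $$
and then interpolate between $L_p$ and $L_\infty$, $\|u\|_{L_q}\leq\|u\|_{L_\infty}^{1-p/q}\|u\|_{L_p}^{p/q}\leq n^{1-p/q}\|u\|_{L_p}^{p/q}$, to get $\|f(t,\cdot)\|_{H_q^{\gamma-2}}\leq Nn^{2-p/q}\|u(t,\cdot)\|_{L_p}^{p/q}$. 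Raising to the $q$-th power and integrating gives $\|f\|_{\bH_q^{\gamma-2}(\tau_n)}^q\leq Nn^{2q-p}\|u\|_{\bH_p^{\gamma}(\tau_n)}^p<\infty$, so $f\in\bH_q^{\gamma-2}(\tau_n)$; and $u_0\in U_q^{\alpha,\gamma}$ by hypothesis.

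With the data in place I would invoke the linear $L_q$-theory for time-fractional SPDEs (from \cite{kim2019sobolev,kim2020sobolev}; note that $q>p$ still satisfies every exponent restriction, none of which demands $q=2^k$) to obtain a unique $w\in\cH_q^{\gamma}(\tau_n)$ solving the linear equation with data $(f,g,u_0)$. Since $\opar0,\tau_n\cbrk$ and $\Omega$ carry finite measure, H\"older's inequality gives the embedding $\cH_q^{\gamma}(\tau_n)\hookrightarrow\cH_p^{\gamma}(\tau_n)$, so $w$ is also an $\cH_p^{\gamma}(\tau_n)$-solution of the same linear problem; as $u$ is one too, the uniqueness part of the linear $L_p$-theory forces $u=w$ in $\cH_p^{\gamma}(\tau_n)$. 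Hence $u=w\in\cH_q^{\gamma}(\tau_n)$ for every $n$, and since $\tau_n\uparrow\infty$ this is exactly $u\in\cH_{q,loc}^{\gamma}$. The only genuinely delicate points are the white-noise estimate (the step where the dimension restriction enters) and the fact that freezing the coefficients along $u$ leaves $u$ a bona fide solution of the linear problem; both I expect to be routine once the linear theory is in hand.
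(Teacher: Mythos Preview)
Your overall strategy---localize using the sup-bound on $u$, freeze the nonlinear and noise terms, and invoke the \emph{linear} $L_q$-theory---is exactly the paper's approach, and your data estimates for $f$ and $g$ are correct. You even streamline the paper's argument by not introducing an auxiliary $q$-solution $\bar u$ from Theorem~\ref{main theorem} (which, as you note, would seem to require $q=2^k$).

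There is, however, a genuine gap in your comparison step. You claim the embedding $\cH_q^{\gamma}(\tau_n)\hookrightarrow\cH_p^{\gamma}(\tau_n)$ follows from H\"older's inequality because $\opar0,\tau_n\cbrk$ and $\Omega$ have finite measure. But the spatial domain is $\bR^d$, which has infinite measure, so $H_q^{\gamma}(\bR^d)\not\subset H_p^{\gamma}(\bR^d)$ for $q>p$; hence $\bH_q^{\gamma}(\tau_n)\not\subset\bH_p^{\gamma}(\tau_n)$ and the embedding you need fails. You therefore cannot bring $w$ down to $\cH_p^{\gamma}(\tau_n)$ this way.

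The paper resolves this by going in the opposite direction: it first \emph{lifts $u$ to the $q$-scale} at a lower regularity. Using the sup-bound $\sup_x|u|\leq n$ on $\opar0,\tau_n\cbrk$ one has $|u|^q\leq n^{q-p}|u|^p$, so $u\in\bL_q(\tau_n)$; together with your verification that the frozen data satisfy $Lu+\bar b^i uu_{x^i}\in\bH_q^{-2}(\tau_n)$ and $\sigma(u)\bm\eta\in\bH_q^{-2+c_0}(\tau_n,l_2)$, this places $u\in\cL_q(\tau_n)$. Now both $u$ and your $w\in\cH_q^{\gamma}(\tau_n)\subset\cL_q(\tau_n)$ solve the same linear problem, and their difference $u-w\in\cL_q(\tau_n)$ satisfies the \emph{deterministic} equation $\partial_t^\alpha(u-w)=L(u-w)$ with zero initial data; the deterministic uniqueness in Lemma~\ref{lemma:quasi-linear results} at exponent $q$ then gives $u=w\in\cH_q^{\gamma}(\tau_n)$. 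Replacing your embedding step by this interpolation-and-lift fixes the argument.
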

\begin{proof}
See {\bf{Proof of Theorem \ref{uniqueness_in_p}}} in Section \ref{section:proof of main thm}.
\end{proof}

Finally, we obtain the maximal H\"older regularity of the solution by combining Theorems \ref{main theorem} and \ref{uniqueness_in_p}. Recall that $c_0$ is introduced in \eqref{def of c0}.

\begin{corollary}
\label{maximal holder regularity}
Suppose $\alpha$, $\beta$, $d$, and $\gamma$ satisfy \eqref{condi alpha,beta,d,gamma}, and  $u_0\in \cap_{p>2}U_p^{(2-c_0-d/2)\wedge1}$, and $h\in \cap_{p>2}\bL_p$ satisfies \eqref{bound of the stochastic coefficient}. Then, for any bounded stopping time $\tau \leq T$, \eqref{intro: holder reg of sol} holds almost surely. 
\end{corollary}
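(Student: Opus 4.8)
The plan is to extract the two pathwise Hölder bounds in \eqref{intro: holder reg of sol} from the single embedding estimate \eqref{time and space holder regularity} of Theorem \ref{main theorem}, by optimizing over the free parameters $p$, $\gamma$, $\mu$, $\nu$. Write $S:=(2-c_0-d/2)\wedge1$ for the target spatial exponent and $A:=\frac{\alpha}{2}S+\frac{(2\beta-1)_-}{2}$ for the (uncapped) target temporal exponent, and take $\kappa_0$ small enough that the discrepancy between $c_0$ and $(2\beta-1)_+/\alpha$ at $\beta=1/2$ is absorbed into the arbitrary loss $\ep$.

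First I would fix the solution once and for all. For every $p=2^k$ obeying \eqref{conditions of p in theorem} and every $\gamma<S$, the hypotheses $u_0\in U_p^S\subset U_p^\gamma$ and $h\in\bL_p$ let Theorem \ref{main theorem} produce a unique solution in $\cH_{p,loc}^\gamma$; Theorem \ref{uniqueness_in_p} then identifies all of these as one and the same function $u$, lying in $\cH_{p,loc}^\gamma$ for every admissible pair $(p,\gamma)$. This identification is the backbone of the argument: the two target exponents cannot be realized for the same choice of $\nu$, so the two bounds must be proved along different parameter limits, yet applied to a single function for which the pathwise statements make sense.

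Next I would prove the spatial bound. Since the temporal norm in \eqref{time and space holder regularity} dominates $\sup_{t\le\tau}|u(t,\cdot)|_{C^{\gamma-2\nu-d/p}}$ and $C^{\gamma-2\nu-d/p}\hookrightarrow C^{S-\ep}$ once $\gamma-2\nu-d/p>S-\ep$, it suffices to drive the spatial exponent up to $S$. Taking $\gamma\uparrow S$, $\nu\downarrow 1/(\alpha p)$, and $p=2^k\to\infty$ keeps \eqref{condition of nu for holder embedding 1} satisfied while $\gamma-2\nu-d/p\to S$, giving $\sup_{t\le\tau}|u(t,\cdot)|_{C^{S-\ep}(\bR^d)}<\infty$ a.s. For the temporal bound I would instead push $\nu$ to the opposite extreme, $\nu\uparrow\gamma/2-d/(2p)$ (admissible since $\gamma/2\le1-c_0/2$ in both branches of $S$), together with $\gamma\uparrow S$ and $\mu$ at its upper limit. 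Using $a_+-a_-=a$ with $a=2\beta-1$, equivalently $\frac{(2\beta-1)_+}{2}-\beta=\frac{(2\beta-1)_-}{2}-\frac12$, one gets
\[
\alpha(\nu+c_0/2)-\beta\ \longrightarrow\ \frac{\alpha}{2}S+\frac{(2\beta-1)_-}{2}-\frac12=A-\frac12,
\]
so the admissible temporal exponent satisfies
\[
\alpha\mu-\frac1p\ \longrightarrow\ \Bigl(A-\tfrac12\Bigr)\wedge\tfrac12+\tfrac12=A\wedge1,
\]
which is exactly the exponent in \eqref{intro: holder reg of sol}. Because the constraint $\nu<\gamma/2-d/(2p)$ forces $\gamma-2\nu-d/p>0$ in this regime, the spatial norm dominates $\sup_x|u(\cdot,x)|$, and \eqref{time and space holder regularity} yields $\sup_{x}|u(\cdot,x)|_{C^{A\wedge1-\ep}([0,\tau])}<\infty$ a.s.

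The routine but essential remaining work is bookkeeping. I must check that the two-sided constraints \eqref{condition of nu for holder embedding 1} stay compatible in each limit (in particular that $1/(\alpha p)<\nu$ leaves room both for $\nu\to0$ and for $\nu\to\gamma/2$ as $p=2^k\to\infty$), and verify that the case split $\beta\ge1/2$ versus $\beta<1/2$ collapses to the single formula $A\wedge1$ via the identity above. Finally I would assemble the almost-sure statements into one exceptional null set by intersecting over the countable family $p=2^k$ and rational $\ep\downarrow0$. The only genuine obstacle is conceptual rather than computational—securing a single solution valid across all $p$ so that the sharp exponents, attained only in the $p\to\infty$ limit, describe the same random field—and this is precisely what Theorem \ref{uniqueness_in_p} supplies.
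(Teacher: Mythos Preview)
Your proposal is correct and follows essentially the same approach as the paper: use Theorem \ref{uniqueness_in_p} to identify the solutions across all admissible $p$, then extract the two H\"older bounds from \eqref{time and space holder regularity} by sending $p=2^k\to\infty$ with appropriate choices of $\gamma,\mu,\nu$. The paper's own proof is very terse (it literally says ``by letting $p$ large in \eqref{time and space holder regularity}, we have \eqref{intro: holder reg of sol}''), whereas you spell out the two separate parameter limits---$\nu\downarrow 1/(\alpha p)$ for the spatial exponent, $\nu\uparrow \gamma/2-d/(2p)$ for the temporal one---and verify the algebraic identity $(A-\tfrac12)\wedge\tfrac12+\tfrac12=A\wedge1$ that recovers the stated temporal exponent; this is exactly the computation the paper leaves implicit.
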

\begin{proof}
When $\alpha$, $\beta$, $d$, and $\gamma$ are given in \eqref{condi alpha,beta,d,gamma}, we choose $p$ as in \eqref{conditions of p in theorem}. For each $p$, there exists a unique solution $u_p\in\cH^\gamma_{p,loc}$ to equation \eqref{eq:target equation}. Due to Theorem \ref{uniqueness_in_p}, $u_p\in \cH^{\gamma}_{q,loc}$ for any $q \geq p$ so that we write $u$ instead of $u_p$ and $u$ is independent of $p$. Thus, by letting $p$ large in \eqref{time and space holder regularity}, we have \eqref{intro: holder reg of sol}. Thus, the corollary is proved.
\end{proof}

\begin{remark}
\label{regularity comparision}
\begin{enumerate}[(i)]

\item 
If $1/2 \leq \beta <\alpha + 1/2$, the H\"older regularity in space is $\alpha/2$ times that in time. Furthermore, we can recover the H\"older regularity results of SBEs ($\alpha = \beta = 1$) by considering the case $\alpha,\beta\uparrow 1$. We cite \cite[Proposition 5.1]{lewis2018stochastic} or \cite[Corollary 3.1]{han2022regularity} for reader's convenience.

\item  
If $\beta < 1/2$, then the H\"older regularity in time obtains additional regularity by as much as $1/2-\beta$. This phenomenon is caused by the stochastic integral of equation \eqref{eq:target equation} adding the H\"older regularity of noise in time almost $1/2$, and $\partial_t^\beta$ reducing the regularity of the noise by $\beta$.

\end{enumerate}
\end{remark}

\vspace{2mm}

\section{Proof of Theorems \ref{main theorem} and \ref{uniqueness_in_p}}
\label{section:proof of main thm}

We assume that all conditions in Theorem \ref{main theorem} hold for the remainder of this section.

To establish the existence of a global solution, we need to obtain the uniqueness and existence of local solutions (Lemma \ref{lemma:existence uniqueness reg of local solution}). With these local solutions, we build a candidate for a global solution. More precisely, we paste the local solutions and demonstrate that the local existence time explodes almost surely (Lemma \ref{lemma:key_lemma}). To prove that the local existence time explodes almost surely, we demonstrate that a uniform $L_p$ bound of local solutions exists. In detail, we separate the local solution into noise- and nonlinear-dominating parts. The noise-dominating part is affected by the stochastic part of the equation, and the other part is influenced by the nonlinear term $b^i uu_{x^i}$. When we deal with the noise-dominating part of the solution, the dominating function of the diffusion coefficient provides a uniform $L_p$ bound for the noise-dominating part of the local solutions (see Assumption \ref{assumptions_on_coefficients_stochastic_part} (p) \eqref{function bound of the diffusion coefficient} and Lemma \ref{lemma:noise dominant part}). The other part is controlled by employing a version of the chain rule and Gr\"onwall inequality (see Lemmas \ref{lemma: chain ineq} and \ref{lemma:nonlinear control} and Theorem \ref{time fractional integral gronwall ineq}).

\vspace{2mm}

First, we introduce the uniqueness and existence theorem for semilinear SPDEs.

\begin{assumption}[$\tau$]
\label{assumptions on f and g}
\begin{enumerate}[(i)]
\item 
\label{condi for semi linear 1}
The functions $f(t,x,u)$ and $g^k(t,x,u)$ are $\cP\times\cB(\bR^d)\times\cB(\bR)$-measurable functions satisfying the following:
\begin{equation*}
f(t,x,0)\in\bH_p^\gamma(\tau) \quad\text{and}\quad g(t,x,0) = (g^1(t,x,0),g^2(t,x,0),\dots)\in\bH_p^{\gamma+1}(\tau,l_2).
\end{equation*}

\item 
\label{condi for semi linear 2}
For any $\ep>0$, there exists a constant $N_\ep$ such that for any $u,v\in\bH_p^{\gamma}(\tau)$,
\begin{equation*}
\| f(u) - f(v) \|^p_{\bH_p^{\gamma-2}(\tau)} + \| g(u) - g(v) \|^p_{\bH_p^{\gamma-2+c_0}(\tau,l_2)} \leq \ep\|u-v\|^p_{\bH_p^{\gamma}(\tau)} + N_\ep\|u-v\|^p_{\bH_p^{\gamma-2}(\tau)},
\end{equation*}
where $c_0$ is the constant introduced in \eqref{def of c0}.

\end{enumerate}
\end{assumption}

\begin{lemma}
\label{lemma:quasi-linear results}
Let $\tau \leq T$ be a bounded stopping time. Suppose Assumption \ref{assumptions on f and g} $(\tau)$ hold. Then, for initial data $u_0\in U_p^{\gamma}$, the following equation:
\begin{equation}
\label{quasi-linear_equation}
\partial_t^\alpha u = L u + f(u) + \partial_t^\beta \int_0^t g^k(u) dw_t^k,\quad 0<t\leq \tau;\quad u(0,\cdot) = u_0
\end{equation}
has a unique solution $u\in\cH_p^{\gamma}(\tau)$. Moreover,
\begin{equation}
\label{quasi-linear_estimate}
\| u \|^p_{\cH_p^{\gamma}(\tau)} \leq N \left( \|u_0\|^p_{U_p^{\gamma}} + \| f(0) \|^p_{\bH_p^{\gamma-2}(\tau)} + \| g(0) \|^p_{\bH^{\gamma-2+c_0}_p(\tau,l_2)} \right),
\end{equation}
where $N = N(\alpha,\beta,\gamma,d,p,K,T)$ and $c_0$ is the constant introduced in \eqref{def of c0}.
\end{lemma}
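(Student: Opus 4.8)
The plan is to prove Lemma \ref{lemma:quasi-linear results} by reducing the quasi-linear (really semilinear) equation \eqref{quasi-linear_equation} to the linear theory for equations with Caputo fractional derivatives and space-time white noise, and then closing the loop via a contraction-mapping (fixed point) argument using Assumption \ref{assumptions on f and g} $(\tau)$. The essential input is the $L_p$-solvability result for the \emph{linear} problem
\begin{equation*}
\partial_t^\alpha u = Lu + \tilde f(t,x) + \partial_t^\beta \int_0^t \tilde g^k(t,x)\, dw_t^k, \quad 0 < t \leq \tau;\quad u(0,\cdot) = u_0,
\end{equation*}
which, under Assumption \ref{assumptions_on_coefficients_deterministic_part} and for $\tilde f \in \bH_p^{\gamma-2}(\tau)$, $\tilde g \in \bH_p^{\gamma-2+c_0}(\tau,l_2)$, $u_0 \in U_p^\gamma$, yields a unique $u \in \cH_p^\gamma(\tau)$ together with the a priori estimate
\begin{equation*}
\| u \|_{\cH_p^\gamma(\tau)}^p \leq N\left( \| u_0 \|_{U_p^\gamma}^p + \| \tilde f \|_{\bH_p^{\gamma-2}(\tau)}^p + \| \tilde g \|_{\bH_p^{\gamma-2+c_0}(\tau,l_2)}^p \right).
\end{equation*}
This is exactly the content of the cited linear theory (e.g.\ \cite{kim2019sobolev,kim2020sobolev}), and I would invoke it as the black box on which everything rests. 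The constant $c_0$ in \eqref{def of c0} encodes precisely the loss of spatial regularity coming from the $\partial_t^\beta$ applied to the stochastic integral, so the natural source space for $\tilde g$ is $\bH_p^{\gamma-2+c_0}$.

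First I would define the solution map $\cR$ on $\cH_p^\gamma(\tau)$ by setting, for $v \in \cH_p^\gamma(\tau)$, $\cR v := u$ to be the unique solution of the linear problem with free terms $\tilde f := f(v)$ and $\tilde g := g(v)$; by Assumption \ref{assumptions on f and g} \eqref{condi for semi linear 1} and the Lipschitz-type bound \eqref{condi for semi linear 2} these free terms indeed lie in the correct spaces, so $\cR$ maps $\cH_p^\gamma(\tau)$ into itself. The key step is then to show $\cR$ is a contraction, at least after passing to an equivalent norm or a sufficiently short stopping time. Applying the linear a priori estimate to the difference $\cR v_1 - \cR v_2$, whose free terms are $f(v_1)-f(v_2)$ and $g(v_1)-g(v_2)$ with zero initial data, gives
\begin{equation*}
\| \cR v_1 - \cR v_2 \|_{\cH_p^\gamma(\tau)}^p \leq N\left( \| f(v_1)-f(v_2) \|_{\bH_p^{\gamma-2}(\tau)}^p + \| g(v_1)-g(v_2) \|_{\bH_p^{\gamma-2+c_0}(\tau,l_2)}^p \right),
\end{equation*}
and then \eqref{condi for semi linear 2} bounds the right-hand side by $N\ep \| v_1 - v_2 \|_{\bH_p^\gamma(\tau)}^p + N N_\ep \| v_1 - v_2 \|_{\bH_p^{\gamma-2}(\tau)}^p$.

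The main obstacle is that the $N_\ep$-weighted lower-order term $\| v_1 - v_2 \|_{\bH_p^{\gamma-2}}^p$ is not small, so one cannot get a contraction in a single step: choosing $\ep$ small controls the top-order part, but the lower-order remainder survives. The standard remedy, which I would follow, is to iterate $\cR$ and exploit that the fractional-in-time structure produces a gain on the lower-order norm over short time intervals — concretely, one shows that the linear solution operator gains a factor that is a positive power of the interval length (a Mittag-Leffler / time-fractional smoothing effect) when measured from $\bH_p^{\gamma-2}$ into $\cH_p^\gamma$. Partitioning $[0,\tau]$ into finitely many subintervals on which the combined constant is strictly less than one, one obtains existence and uniqueness on each piece and then pastes them together; uniqueness on the whole interval follows because the estimate for the difference forces $v_1 = v_2$ on each subinterval successively. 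The final estimate \eqref{quasi-linear_estimate} is obtained by applying the linear a priori bound once more to $u = \cR u$ with $f(u), g(u)$ decomposed as $f(0) + (f(u)-f(0))$ and likewise for $g$, absorbing the $u$-dependent increments into the left-hand side using the same smallness-plus-iteration mechanism; this is where the precise form of the cited results in \cite{kim2019sobolev,kim2020sobolev} is used verbatim, so I would cite them rather than reproduce the partition argument in full.
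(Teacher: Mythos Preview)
Your proposal is correct in outline, but it reproves the wrong thing. The paper's proof does not redo the contraction argument at all: for $\tau\equiv T$ it simply cites \cite[Theorem 2.18]{kim2020sobolev}, and the entire content of the lemma's proof is the extension from deterministic terminal time $T$ to a bounded stopping time $\tau\leq T$. That extension goes by (i) for existence, replacing $f,g$ by $1_{t\leq\tau}f,\,1_{t\leq\tau}g$, solving on $[0,T]$ via the cited theorem, and restricting to $[0,\tau]$; (ii) for uniqueness, given two solutions $u,v$ on $[0,\tau]$, freezing the nonlinearity at $v$ and solving the resulting \emph{linear} problem on $[0,T]$ to produce an extension $\bar v$, checking $\bar v=v$ on $[0,\tau]$ via the deterministic uniqueness, and then invoking uniqueness in $\cH_p^\gamma(T)$ for $\bar u,\bar v$.

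Your sketch instead assumes the linear theory is already available on the random interval $(0,\tau]$ and runs the fixed-point argument there. That is not what the cited references give you directly; getting from $T$ to $\tau$ is precisely the step the paper supplies. Separately, your plan to ``partition $[0,\tau]$ into finitely many subintervals'' to kill the lower-order $N_\ep$ term should be flagged: for the Caputo derivative the equation has memory back to $t=0$, so one cannot simply restart the problem at intermediate times as in the case $\alpha=1$. The contraction in \cite{kim2019sobolev,kim2020sobolev} is closed by other means (iterating the map, or weighted norms), not by naive time-slicing; if you cite those results as you say, this is harmless, but the heuristic you give for \emph{why} it works is misleading in the fractional setting.
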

\begin{proof}
Theorem 5.1 of \cite{kry1999analytic} is the motivation of the proof. The case $\tau \equiv T$ is obtained by \cite[Theorem 2.18]{kim2020sobolev}; thus, we only consider the case $\tau \leq T$. 

\textit{(Step 1). (Existence)}
 Set
$$
\bar{f}(t, u) := 1_{t\leq \tau} f(t,u)\quad\text{and}\quad \bar{g}(t, u) := 1_{t\leq \tau} g(t,u).
$$
Additionally, $\bar{f}(u)$ and $\bar{g}(u)$ satisfy Assumption \ref{assumptions on f and g} ($T$). Then, by \cite[Theorem 2.18]{kim2020sobolev}, there exists a unique solution $u\in \cH_{p}^{\gamma}(T)$ such that $u$ satisfies equation \eqref{quasi-linear_equation} with $\bar f$ and $\bar{g}$, instead of $f$ and $g$, respectively. As $\tau\leq T$, we have $u\in\cH_{p}^{\gamma}(\tau)$ and $u$ satisfies equation \eqref{quasi-linear_equation} and estimate \eqref{quasi-linear_estimate} with $f$ and $g$.

\textit{(Step 2). (Uniqueness)}
Let $u,v\in \cH_{p}^{\gamma}(\tau)$ be two solutions of equation \eqref{quasi-linear_equation}. Then, \cite[Theorem 2.18]{kim2020sobolev} yields there exists a unique solution $\bar{v}\in \cH_{p}^{\gamma}(T)$ satisfying
\begin{equation}
\label{equation_in_proof_of_uniqueness}
\partial_t^\alpha \bar{v}=L\bar{v} + \bar{f}(v)  + \sum_{k=1}^{\infty} \partial^{\beta}_t\int_0^t\bar{g}^k(v)  dw^k_t, \quad 0<t\leq T\, ; \quad \bar{v}(0,\cdot)=u_0.
\end{equation}
Notice that in \eqref{equation_in_proof_of_uniqueness}, $\bar{f}(v)$ and $\bar{g}(v)$ are used instead of $\bar{f}(\bar{v})$ and $\bar{g}(\bar{v})$, respectively.  Set  $\tilde v:=v-\bar{v}$. Then, for fixed $\omega\in\Omega$, we have
$$
\partial_t^\alpha \tilde v = L  \tilde v , \quad 0 < t \leq \tau\, ; \quad \tilde v(0,\cdot)=0.
$$
By the deterministic version of \cite[Theorem 2.18]{kim2020sobolev}, we have $\tilde v = 0$ in $L_p((0,\tau]\times\bR^d)$ almost surely. Additionally, it implies $v(t,\cdot) = \bar{v}(t,\cdot)$ in $L_p((0,\tau]\times\bR^d)$ almost surely. Thus, in equation \eqref{equation_in_proof_of_uniqueness}, we can replace $\bar f (v)$ and $\bar{g}(v)$ with $\bar f (\bar v)$ and $\bar{g}(\bar{v})$. Therefore, $\bar v\in \cH_p^{\gamma}(T)$ satisfies equation \eqref{quasi-linear_equation} on $(0,T]$ with $\bar f, \bar g$ instead of $f,g$, respectively. Similarly, by following word for word, there exists $\bar{u}\in \cH_p^{\gamma}(T)$ such that $\bar{u}$ satisfies equation \eqref{quasi-linear_equation} on $(0,T]$ with $\bar f$ and $\bar g$ instead of $f$ and $g$.
Thus, by the uniqueness result in $\cH_{p}^{\gamma}(T)$, we have $\bar{u} = \bar{v}$ in $\cH_{p}^{\gamma}(T)$, which implies $u = v$ in $\cH_p^{\gamma}(\tau)$. Thus, the lemma is proved.
\end{proof}

Next, we provide the uniqueness and existence of a local solution to equation \eqref{eq:target equation}. As an auxiliary function, we choose $\rho(\cdot)\in C_c^\infty(\bR)$ such that $\rho(z)\geq0$ on $z\in(-\infty,\infty)$, $\rho(z) = 1$ on $|z|\leq 1$, $\rho(z) = 0$ on $|z|\geq2$, and $\frac{d}{dz}\rho(z)\leq 0$ on $z\geq0$. We define the following:
\begin{equation}
\label{def of cut off function}
\rho_m(z) := \rho(z/m).
\end{equation}

\begin{lemma}
\label{lemma:existence uniqueness reg of local solution}
Let $\tau \leq T$ be a bounded stopping time. For $m\in\bN$, there exists $u_m\in \cH_p^{\gamma}(\tau)$ such that
\begin{equation*}
\partial_t^\alpha u_m = L u_m + \bar b^i \left( u_m^{2}\rho_m(u_m) \right)_{x^i} + \partial_t^\beta\int_0^t \sigma(t,x,u_m)\eta^k(x) dw^k_t,\,\, 0<t\leq \tau;\,\,u_m(0,\cdot) = u_0,
\end{equation*}
where $\rho_m$ is the function introduced in \eqref{def of cut off function}. Furthermore, $u_m\in C([0,\tau];C(\bR^d))$ almost surely and
\begin{equation}
\label{continuity of local solutions}
\bE\sup_{t\leq\tau}\sup_{x\in\bR^d}| u_m(t,x) |^p \leq N \| u_m \|_{\cH_p^\gamma(\tau)}^p < \infty
\end{equation}
almost surely.
\end{lemma}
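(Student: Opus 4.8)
The plan is to reduce the local existence and uniqueness to the semilinear result already established in Lemma \ref{lemma:quasi-linear results}. To do this, I would set
\begin{equation*}
f_m(t,x,u) := \bar b^i(t,\bar x^i) \left( u^2 \rho_m(u) \right)_{x^i} = \bar b^i \left( 2u\rho_m(u) + u^2 \rho_m'(u) \right) u_{x^i}
\end{equation*}
and $g^k(t,x,u) := \sigma(t,x,u)\eta^k(x)$, so that equation in the statement takes the form \eqref{quasi-linear_equation}. The task then becomes verifying that this choice of $f_m$ and $g$ satisfies Assumption \ref{assumptions on f and g} $(\tau)$, after which Lemma \ref{lemma:quasi-linear results} immediately yields a unique $u_m\in\cH_p^\gamma(\tau)$ together with the estimate \eqref{quasi-linear_estimate}.

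First I would verify Assumption \ref{assumptions on f and g} \eqref{condi for semi linear 1}: measurability is inherited from the hypotheses on $\bar b^i$, $\rho_m$, and $\sigma$, while $f_m(t,x,0) = 0 \in \bH_p^\gamma(\tau)$ because $\rho_m(0)=1$ kills nothing but the whole term vanishes at $u=0$, and $g(t,x,0) = \sigma(t,x,0)\eta = (\sigma(t,x,0)\eta^k)_k$ lies in $\bH_p^{\gamma+1}(\tau,l_2)$ thanks to the bound $|\sigma(t,x,0)|\le |h(t,x)|$ from Assumption \ref{assumptions_on_coefficients_stochastic_part} $(p)$ \eqref{function bound of the diffusion coefficient} and $h\in\bL_p$. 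The main obstacle is the Lipschitz-type estimate in Assumption \ref{assumptions on f and g} \eqref{condi for semi linear 2} for the nonlinear term $f_m$. The cutoff $\rho_m$ is precisely what makes this tractable: the factor $2u\rho_m(u)+u^2\rho_m'(u)$ is a bounded, globally Lipschitz function of $u$ supported in $|u|\le 2m$, so $f_m(u)-f_m(v)$ can be written schematically as $\bar b^i\,\Phi_m(u)(u-v)_{x^i} + \bar b^i(\Phi_m(u)-\Phi_m(v))v_{x^i}$ with $\Phi_m$ bounded and Lipschitz. I would estimate $\|f_m(u)-f_m(v)\|_{\bH_p^{\gamma-2}(\tau)}$ using Lemma \ref{lemma:properties of bessel potential spaces} \eqref{bounded_operator} to absorb the spatial derivative (raising the index from $\gamma-2$ to $\gamma-1$) and the pointwise multiplier bound \eqref{pointwise_multiplier}, together with the interpolation inequality \eqref{multi_ineq} to split the resulting $H_p^{\gamma-1}$ norm between the $\epsilon\|u-v\|_{H_p^\gamma}$ and $N_\epsilon\|u-v\|_{H_p^{\gamma-2}}$ pieces; here the constraint $p>d/(1-\gamma)$ from \eqref{conditions of p in theorem} is what guarantees the multiplier/kernel estimates for the index $\gamma-1$ go through, as flagged in Remark \ref{eq component explanations 2}. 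The corresponding bound for $g$ is easier, following directly from the global Lipschitz continuity of $\sigma$ in $u$ and \eqref{pointwise_multiplier}.

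For the continuity claim and estimate \eqref{continuity of local solutions}, I would apply Corollary \ref{corollary:continuous embedding of solution}. Since $u_m\in\cH_p^\gamma(\tau)$ with $0<\gamma<(2-c_0-d/2)\wedge1$ and $p$ satisfies \eqref{conditions of p in theorem}, in particular $p>(2+\alpha d)/(\alpha\gamma)$, one can choose $\mu,\nu$ satisfying \eqref{condition of nu for holder embedding 1} with the Hölder exponents $\alpha\mu-1/p>0$ and $\gamma-2\nu-d/p>0$; the embedding then gives $u_m\in C^{\alpha\mu-1/p}([0,\tau];C^{\gamma-2\nu-d/p}(\bR^d))$ almost surely, hence in particular $u_m\in C([0,\tau];C(\bR^d))$, and
\begin{equation*}
\bE\sup_{t\le\tau}\sup_{x\in\bR^d}|u_m(t,x)|^p \le \bE\|u_m\|^p_{C^{\alpha\mu-1/p}([0,\tau];C^{\gamma-2\nu-d/p})} \le N\|u_m\|^p_{\cH_p^\gamma(\tau)} < \infty,
\end{equation*}
where finiteness of the last quantity is exactly what Lemma \ref{lemma:quasi-linear results} provides. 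This completes the argument; the one genuinely delicate point remains the splitting of the nonlinear difference so that the derivative-bearing factor always acts on $u-v$ and never forces a gain of more than one derivative, which is where the independence of $\bar b^i$ from $x^i$ and the structure of the cutoff are used.
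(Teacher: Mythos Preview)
Your overall plan---verify Assumption \ref{assumptions on f and g} and invoke Lemma \ref{lemma:quasi-linear results}, then apply Corollary \ref{corollary:continuous embedding of solution}---is exactly the paper's strategy, and your treatment of $g$ and of the continuity/embedding step matches the paper essentially verbatim.

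The difference, and the weak point, is in how you handle the nonlinear term in Assumption \ref{assumptions on f and g} \eqref{condi for semi linear 2}. You expand the derivative and split
\[
f_m(u)-f_m(v)=\bar b^i\,\Phi_m(u)(u-v)_{x^i}+\bar b^i(\Phi_m(u)-\Phi_m(v))v_{x^i}.
\]
The second term carries $v_{x^i}$, so any bound on it in $\bH_p^{\gamma-2}$ will involve $\|v\|_{\bH_p^\gamma}$, which is not permitted: the constant $N_\ep$ in Assumption \ref{assumptions on f and g} \eqref{condi for semi linear 2} must be uniform in $u,v$. You flag this as ``the one genuinely delicate point'' but do not resolve it, and the pointwise multiplier route you suggest does not obviously apply since $\Phi_m(u(\cdot))$ has no better $x$-regularity than $u$ itself.

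The paper avoids this entirely by \emph{not} expanding the derivative. It keeps the divergence form, uses Lemma \ref{lemma:properties of bessel potential spaces} \eqref{bounded_operator} and \eqref{pointwise_multiplier} to reduce to
\[
\bigl\|u^{2}\rho_m(u)-v^{2}\rho_m(v)\bigr\|_{H_p^{\gamma-1}},
\]
and then, since $\gamma-1<0$, writes $(1-\Delta)^{(\gamma-1)/2}$ as convolution with the kernel $R_{1-\gamma}$ from Remark \ref{Kernel}. Minkowski's inequality together with the scalar Lipschitz bound $|u^{2}\rho_m(u)-v^{2}\rho_m(v)|\le N_m|u-v|$ then gives $N_m\|u-v\|_{L_p}$, and interpolation (Lemma \ref{lemma:properties of bessel potential spaces} \eqref{multi_ineq}) plus Young's inequality yields the required $\ep\|u-v\|_{\bH_p^\gamma}+N_\ep\|u-v\|_{\bH_p^{\gamma-2}}$. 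The same kernel trick with $R_{2-\gamma-c_0}$ (and Parseval on $\{\eta^k\}$) handles the diffusion term. Note that for this lemma only $R_{1-\gamma}\in L_1$ and $R_{2-\gamma-c_0}\in L_2$ are needed, both of which follow from $\gamma<(2-c_0-d/2)\wedge1$; the condition $p>d/(1-\gamma)$ you invoke is actually used later (in Lemma \ref{lemma:key_lemma}), not here.
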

\begin{proof}
Due to Lemma \ref{lemma:quasi-linear results} and Corollary \ref{corollary:continuous embedding of solution}, it suffices to show that Assumption \ref{assumptions on f and g} $(\tau)$ holds. Because $\sigma(t,x,0) \leq h(t,x)$ for all $\omega,t,x$ and $h\in \bL_p$, Assumption \ref{assumptions on f and g} \eqref{condi for semi linear 1} is satisfied.

In the case of Assumption \ref{assumptions on f and g} \eqref{condi for semi linear 2}, notice that for $u,v\in \bR$, we have
\begin{equation*}
\left| u^{2}\rho_m(u) - v^{2}\rho_m(v) \right| \leq N_m|u-v|.
\end{equation*}
Then, for $u,v\in\bH_p^\gamma(\tau)$, by Remark \ref{Kernel} and Lemmas \ref{lemma:properties of bessel potential spaces} \eqref{pointwise_multiplier} and \eqref{bounded_operator}, we have
\begin{equation}
\label{estimate for local sol 1}
\begin{aligned}
&\left\|\bar b^i \left((u(t,\cdot))^{2} \rho_m(u(t,\cdot)) - (v(t,\cdot))^{2} \rho_m(v(t,\cdot))\right)_{x^i} \right\|^p_{H_p^{\gamma-2}} \\
& \leq N\left\| (u(t,\cdot))^{2} \rho_m(u(t,\cdot)) - (v(t,\cdot))^{2} \rho_m(v(t,\cdot)) \right\|^p_{H^{\gamma-1}_p} \\
&\leq N\int_{\bR^d} \left( \int_{\bR^d} \left| R_{1-\gamma}(x-y) \right|\left( (u(\cdot))^{2} \rho_m(\cdot,u(\cdot)) - (v(\cdot))^{2} \rho_m(\cdot,v(\cdot)) \right)(t,y) dy \right)^p dx \\
& \leq N_m\left( \int_{\bR^d}|R_{1-\gamma}(x)|dx \right)^p \int_{\bR^d} |u(t,x) - v(t,x)|^p dx \\
\end{aligned}
\end{equation}
and
\begin{equation} \label{estimate for local sol 2}
\begin{aligned}
&\left\| \sigma(u)\bm{\eta} - \sigma(v)\bm{\eta} \right\|_{H_p^{\gamma-2+c_0}(l_2)}^p \\
& \leq \int_{\bR^d}  \left( \sum_k \left( \int_{\bR^d} \left| R_{-\gamma+2-c_0}(x-y) \right|(\sigma(\cdot,u(\cdot)) - \sigma(\cdot,v(\cdot)))(t,y)\eta^k(y) dy \right)^2 \right)^{p/2} dx \\
& \leq \int_{\bR^d}  \left( \int_{\bR^d} \left|R_{-\gamma+2-c_0}(x-y)\right|^2\left( \sigma(t,y,u(t,y)) - \sigma(t,y,v(t,y)) \right)^2 dy \right)^{p/2} dx \\
& \leq K^p \int_{\bR^d}  \left( \int_{\bR^d} \left|R_{-\gamma+2-c_0}(y)\right|^2(u(t,x-y) - v(t,x-y))^2 dy \right)^{p/2} dx \\
& \leq K^p\left( \int_{\bR^d} \left| R_{-\gamma+2-c_0}(y) \right|^2 dy \right)^{p/2}\int_{\bR^d}  |u(t,x) - v(t,x)|^p  dx \\
\end{aligned}
\end{equation}
on almost every $(\omega,t)\in\opar0,\tau\cbrk$. Due to Remark \ref{Kernel}, we have 
\begin{equation*}
\int_{\bR^d} \left|R_{1-\gamma}(y)\right| dy + \int_{\bR^d} \left| R_{-\gamma+2-c_0}(y) \right|^2 dy<\infty.
\end{equation*} 
By integrating with respect to $(\omega,t)$ to \eqref{estimate for local sol 1} and \eqref{estimate for local sol 2}, employing Lemma \ref{lemma:properties of bessel potential spaces} \eqref{multi_ineq}, and Young's inequality, we have
\begin{equation}
\label{estimate for local sol 3}
\begin{aligned}
&\left\| \bar b^i \left(u^2 \rho_m(u) - v^{2} \rho_m(v)\right)_{x^i} \right\|_{\bH_p^{\gamma-2}(\tau)}^p + \left\| \sigma(u)\eta - \sigma(v)\eta \right\|_{\bH_p^{\gamma-2+c_0}(\tau,l_2)}^p\\
&\quad \leq  N_m\|u-v\|_{\bL_p(\tau)}^p \\
&\quad \leq  \ep\|u-v\|_{\bH^{\gamma}_p(\tau)}^p + N_m\|u-v\|_{\bH^{\gamma-2}_p(\tau)}^p.
\end{aligned}
\end{equation}

To obtain the second assertion, we employ Corollary \ref{corollary:continuous embedding of solution}.
The lemma is proved.

\end{proof}

\begin{remark} \label{def_u}
We introduce a candidate for a global solution. Let $T<\infty$. For $m\in\bN$, let $u_m\in \cH_p^{\gamma}(T)$ be the solution introduced in Lemma \ref{lemma:existence uniqueness reg of local solution}.
Then, for $R\in\{ 1,2,\dots,m \}$, define a stopping time $\tau_m^R$
\begin{equation} 
\label{stopping_time_taumm}
\tau_m^R := \inf\left\{ t\geq0 : \sup_{x\in\bR} |u_m(t,x)|\geq R \right\}\wedge T.
\end{equation}
Observe that 
\begin{equation}
\label{local_existence_time_increasing}
\tau_R^R \leq \tau_m^m
\end{equation}
Indeed, if $R = m$, \eqref{local_existence_time_increasing} is obvious. If $R<m$, we have $u_m\wedge m=u_m\wedge m\wedge R  = u_m \wedge R$ for $t\leq\tau_m^R$. Therefore, $u_m$ and $u_R$ are solutions to equation
\begin{equation*}
\partial_t^\alpha u = L u +  \bar{b}^i\left(u^{2}\rho_R(u) \right)_{x^i} + \sigma(u)\eta^k dw_t^k, \quad0<t\leq\tau_m^R\,; \quad u(0,\cdot)=u_0.
\end{equation*}
In contrast, $u_R\wedge R = u_R\wedge R\wedge m = u_R\wedge m$ for $t\leq \tau_R^R$. Thus, $u_m$ and $u_R$ are solutions to equation
\begin{equation*}
\partial_t^\alpha u = L u +  \bar{b}\left(u^{2}\rho_m(u) \right)_{x^i} + \sigma(u)\eta^k dw_t^k, \quad0<t\leq\tau_R^R\,; \quad u(0,\cdot)=u_0.
\end{equation*}
Observe that the uniqueness and continuity results in Lemma \ref{lemma:existence uniqueness reg of local solution} yields that $u_m=u_R$ for all $t \leq (\tau_m^R\vee\tau_R^R)$. Therefore, for $t \leq \tau_m^R$, 
\begin{equation*}
\sup_{s\leq t}\sup_{x\in\bR}|u_R(s,x)|=\sup_{s\leq t}\sup_{x\in\bR}|u_m(s,x)|\leq R,
\end{equation*}
and this implies  $\tau_m^R\leq\tau_R^R$. Similarly, $\tau_m^R\geq\tau_R^R$; thus, 
$$ \tau_R^R=\tau_m^R
$$ 
almost surely.
Moreover, we have $\tau_m^R \leq \tau_m^m$ because $m > R$. Therefore, we have \eqref{local_existence_time_increasing}.

Further, we define
\begin{equation*}
u(t,x) := u_{m}(t,x)\quad\text{on}\quad t\leq \tau_m^m
\end{equation*}
and set 
\begin{equation}
\label{def of tau infty}
\tau_\infty := \limsup_{m\to\infty}\limsup_{T\to\infty}\tau_m^m.
\end{equation}
It should be remarked that $u(t,x)$ is well-defined on $\Omega\times[0,\infty)\times\bR^d$ and the nontrivial domain of $u$ is $\Omega\times[0,\tau_\infty)\times\bR^d$.

\end{remark}

To obtain a uniform $L_p$ bound of the local solution $u_m$, we separate $u_m$ into noise- and nonlinear-dominating parts. Lemma \ref{lemma:noise dominant part} provides the existence, uniqueness, and estimate of noise-dominating parts of $u_m$.

\begin{lemma}
\label{lemma:noise dominant part}
Let $T<\infty$. Then, there exists $v\in \cH_p^{\gamma}(T)$ such that
\begin{equation*}
\partial_t^\alpha v = L v + \partial_t^\beta\int_0^t \sigma(s,x,u)\eta^k(x) dw^k_s,\quad 0<t\leq T,\quad u(0,\cdot) = u_0
\end{equation*}
Furthermore,  $v\in C([0,T];C(\bR^d))$ almost surely, and
\begin{equation*}
\bE\sup_{t\leq T}\sup_{x\in\bR^d}|v(t,x)|^p + \bE\sup_{t\leq T}\|v(t,\cdot)\|_{L_p}^p\leq N\| v \|_{\cH_p^{\gamma}(T)}^p \leq N\| u_0 \|_{U_p^{\gamma}}^p + N\| h \|_{\bL_p(\tau)}^p,
\end{equation*}
where $N = N(\alpha,\beta,\gamma,d,p,K,T)$.
\end{lemma}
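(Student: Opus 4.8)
The plan is to recognise the equation for $v$ as a \emph{linear} equation whose stochastic forcing term does not depend on the unknown, and then to invoke the linear existence theory of Lemma \ref{lemma:quasi-linear results}. Concretely, I would set $f\equiv 0$ and $g^k(t,x):=\sigma(t,x,u(t,x))\eta^k(x)$; since $u$ is a fixed predictable function and $\sigma$ is jointly measurable, $g=(g^1,g^2,\dots)$ is a predictable $l_2$-valued process that is independent of the unknown $v$. Consequently the Lipschitz-type bound in Assumption \ref{assumptions on f and g}(ii) holds trivially (both $f(u)-f(v)$ and $g(u)-g(v)$ vanish), and only the integrability of the base point, Assumption \ref{assumptions on f and g}(i), has to be checked, i.e. $g\in\bH_p^{\gamma-2+c_0}(T,l_2)$.

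The one genuine computation is the verification that $g\in\bH_p^{\gamma-2+c_0}(T,l_2)$, and this is exactly where Assumption \ref{assumptions_on_coefficients_stochastic_part} and the dimension restriction enter. Writing $(1-\Delta)^{(\gamma-2+c_0)/2}$ as convolution against the kernel $R_{2-c_0-\gamma}$ of Remark \ref{Kernel} (legitimate because $\gamma<2-c_0-d/2$ forces $\gamma-2+c_0<0$), applying Parseval in the orthonormal basis $\{\eta^k\}$, and using the pointwise domination $|\sigma(t,x,u)|\leq|h(t,x)|$ from Assumption \ref{assumptions_on_coefficients_stochastic_part}, one obtains, exactly as in \eqref{estimate for local sol 2},
\begin{equation*}
\| g(t,\cdot) \|_{H_p^{\gamma-2+c_0}(l_2)}^p \leq N \left( \int_{\bR^d} |R_{2-c_0-\gamma}(y)|^2\, dy \right)^{p/2} \| h(t,\cdot) \|_{L_p}^p,
\end{equation*}
and integrating over $\opar0,T\cbrk$ yields $\| g \|_{\bH_p^{\gamma-2+c_0}(T,l_2)}^p \leq N \| h \|_{\bL_p(T)}^p$. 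The finiteness of $\int_{\bR^d}|R_{2-c_0-\gamma}(y)|^2\,dy$ is the crucial point: by Remark \ref{Kernel} the kernel behaves like $|y|^{(2-c_0-\gamma)-d}$ near the origin, so its square is integrable precisely when $2-c_0-\gamma>d/2$, which is guaranteed by $0<\gamma<(2-c_0-d/2)\wedge1$. Thus Lemma \ref{lemma:quasi-linear results} applies and furnishes a unique $v\in\cH_p^\gamma(T)$ together with the estimate \eqref{quasi-linear_estimate}, which here reads $\| v \|_{\cH_p^\gamma(T)}^p \leq N\| u_0 \|_{U_p^\gamma}^p + N\| h \|_{\bL_p(T)}^p$.

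It remains to upgrade this $\cH_p^\gamma$-solution to the pointwise statements. Since $p$ satisfies \eqref{conditions of p in theorem}, I can choose $\mu$ and $\nu$ obeying \eqref{condition of nu for holder embedding 1} with $\nu$ near its lower bound $1/(\alpha p)$, so that simultaneously $\gamma-2\nu-d/p>0$ (using $p>(2+\alpha d)/(\alpha\gamma)>d/\gamma$) and $\gamma-2\nu>0$. Corollary \ref{corollary:continuous embedding of solution} then gives $v\in C^{\alpha\mu-1/p}([0,T];C^{\gamma-2\nu-d/p}(\bR^d))$ almost surely, whence $v\in C([0,T];C(\bR^d))$ and $\bE\sup_{t\leq T}\sup_{x}|v(t,x)|^p \leq N\| v \|_{\cH_p^\gamma(T)}^p$; while Theorem \ref{thm:embedding theorem for sol space}, combined with the embedding $H_p^{\gamma-2\nu}\hookrightarrow L_p$ (valid since $\gamma-2\nu>0$), gives $v\in C([0,T];H_p^{\gamma-2\nu})$ and hence $\bE\sup_{t\leq T}\|v(t,\cdot)\|_{L_p}^p\leq N\| v \|_{\cH_p^\gamma(T)}^p$. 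Chaining these two moment bounds with the $\cH_p^\gamma$-estimate from the previous paragraph produces the full display in the statement.

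The proof is largely a matter of correctly identifying function spaces and reusing earlier estimates; the only step requiring care --- and the place where the structural hypotheses $d<4-2c_0$ and $0<\gamma<(2-c_0-d/2)\wedge1$ are indispensable --- is controlling the rough stochastic forcing $g$ in the negative-smoothness space $\bH_p^{\gamma-2+c_0}(T,l_2)$ via square-integrability of the Bessel kernel $R_{2-c_0-\gamma}$. I expect no essential difficulty beyond this, since the domination $|\sigma|\leq|h|$ removes any dependence of the bound on the (a priori only locally controlled) solution $u$, which is precisely what makes the resulting $L_p$-bound uniform and thus usable in the subsequent blow-up argument.
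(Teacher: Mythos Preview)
Your proposal is correct and follows essentially the same approach as the paper: reduce to Lemma \ref{lemma:quasi-linear results} by verifying Assumption \ref{assumptions on f and g}, with the only nontrivial point being the bound $\|\sigma(u)\bm\eta\|_{\bH_p^{\gamma-2+c_0}(T,l_2)}^p\leq N\|h\|_{\bL_p(T)}^p$ via the kernel $R_{2-c_0-\gamma}$ and the domination $|\sigma|\le|h|$. Your treatment is in fact slightly more explicit than the paper's, which dispatches the continuity and sup-norm bounds by pointing back to the proof of Lemma \ref{lemma:existence uniqueness reg of local solution}, whereas you spell out the choice of $\nu$ and the two embeddings (Corollary \ref{corollary:continuous embedding of solution} and Theorem \ref{thm:embedding theorem for sol space}) separately.
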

\begin{proof}
Similar to the proof of Lemma \ref{lemma:existence uniqueness reg of local solution}, it is enough to show that Assumption \ref{assumptions on f and g} $(\tau)$ holds.
Set $\bm{\eta} = (\eta^1,\eta^2,\dots)$. Then, by Remark \ref{Kernel}, for $t\leq T$
\begin{equation}
\label{bound of stochastic part}
\begin{aligned}
&\|\sigma(t,\cdot,u(t,\cdot))\bm{\eta}\|^p_{H_p^{\gamma-2+c_0}(l_2)} \\
&= \int_{\bR^d} \left( \sum_{k=1}^\infty\left( \int_{\bR^d} R_{-\gamma+2-c_0}(x-y)\sigma(t,y,u(t,y))\eta^k(y) dy \right)^2 \right)^{p/2} dx \\
&= \left( \int_{\bR^d} |R_{-\gamma+2-c_0}(x)|^2 dx \right)^{p/2} \int_{\bR^d} |\sigma(t,y,u(t,y))|^p dy  \\
&\leq \left( \int_{\bR^d} |R_{-\gamma+2-c_0}(x)|^2 dx \right)^{p/2} \int_{\bR^d} |h(t,y)|^p dy  \\
&\leq N\| h(t,\cdot) \|_{L_p}^p.
\end{aligned}
\end{equation}
Therefore, 
\begin{equation*}
\begin{aligned}
\left\| \sigma(u)\bm{\eta} \right\|_{\bH_p^{\gamma - 2 + c_0}(T,l_2)}^p
& \leq \bE\int_0^T\| \sigma(t,\cdot,u(t,\cdot))\bm{\eta} \|_{H_p^{\gamma-2+c_0}(l_2)}^p dt
\leq N\| h \|_{\bL_p}^p.
\end{aligned}
\end{equation*}
Thus, the lemma is proved by Lemma \ref{lemma:quasi-linear results}.
\end{proof}

Next, we control the nonlinear-dominating parts of the local solutions. The following two lemmas are crucial in obtaining uniform $L_p$ bounds. Lemma \ref{lemma: chain ineq} functions as a chain rule, and Theorem \ref{time fractional integral gronwall ineq} is a version of the Gr\"onwall inequality.

\begin{lemma}
\label{lemma: chain ineq}
Suppose $\alpha\in(0,1)$ and $k\in\bN$. For any $\psi\in C_c^\infty((0,\infty)\times\bR^d)$, we have
\begin{equation}
\label{integration by part}
\partial_t^\alpha (\psi(\cdot,x))^{2^k}(t) \leq 2^{k} \psi(t,x)|\psi(t,x)|^{2^{k}-2} \partial_t^\alpha \psi(t,x),
\end{equation}
for all $(t,x)\in(0,\infty)\times\bR^d$.
\end{lemma}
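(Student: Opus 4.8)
The statement is precisely the convexity (chain rule) inequality
$\partial_t^\alpha (F\circ\varphi)(t)\leq F'(\varphi(t))\,\partial_t^\alpha\varphi(t)$
for a convex $C^1$ function $F$, applied with $F(z)=z^{2^k}$ and $\varphi(\cdot)=\psi(\cdot,x)$ for fixed $x$. Indeed, since $2^k$ is even, $F(z)=z^{2^k}$ is convex on $\bR$ and $F'(z)=2^k z^{2^k-1}=2^k z|z|^{2^k-2}$, which is exactly the right-hand side of \eqref{integration by part}. So the plan is to establish the general convexity inequality and then specialize.

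The key tool is an alternative representation of the Caputo derivative. Using $\frac{\alpha}{(t-s)^{\alpha+1}}=\frac{d}{ds}(t-s)^{-\alpha}$ and integrating by parts in the defining formula $\partial_t^\alpha\varphi(t)=\frac{1}{\Gamma(1-\alpha)}\int_0^t(t-s)^{-\alpha}\varphi^{(1)}(s)\,ds$, I would derive
\[
\partial_t^\alpha\varphi(t)=\frac{1}{\Gamma(1-\alpha)}\left[\frac{\varphi(t)-\varphi(0)}{t^\alpha}+\alpha\int_0^t\frac{\varphi(t)-\varphi(s)}{(t-s)^{\alpha+1}}\,ds\right].
\]
For $\varphi=\psi(\cdot,x)\in C_c^\infty$ this is legitimate: the boundary term at $s=t$ vanishes because $\varphi(t)-\varphi(s)=O(t-s)$ forces $(\varphi(t)-\varphi(s))(t-s)^{-\alpha}=O((t-s)^{1-\alpha})\to 0$ (as $1-\alpha>0$), and the same bound shows the singular integral converges absolutely.

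With this representation in hand, I would apply it to $F\circ\varphi$ and invoke the tangent-line inequality for the convex function $F$, namely $F(\varphi(t))-F(\varphi(s))\leq F'(\varphi(t))\,(\varphi(t)-\varphi(s))$, valid for every $s\in(0,t)$ and likewise at $s=0$. Since the kernels $t^{-\alpha}$ and $\alpha(t-s)^{-\alpha-1}$ are nonnegative, I can multiply this pointwise inequality by them, integrate in $s$, and pull the $s$-independent factor $F'(\varphi(t))$ out of the integral; recombining via the same representation yields $\partial_t^\alpha(F\circ\varphi)(t)\leq F'(\varphi(t))\,\partial_t^\alpha\varphi(t)$. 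Substituting $F(z)=z^{2^k}$ gives \eqref{integration by part} for each fixed $x$.

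The main obstacle is the rigorous justification of the representation formula, in particular the integration by parts and the vanishing of the boundary contribution at $s=t$, together with the absolute convergence of the resulting singular integral. Once that is secured the convexity step is elementary, since it uses only the sign of the kernels and the fact that $F'(\varphi(t))$ is independent of the integration variable $s$. A minor point to record is that $F(z)=z^{2^k}$ is genuinely convex because $2^k$ is even, so $F''(z)=2^k(2^k-1)z^{2^k-2}\geq0$; the case $k=1$, where $F'(z)=2z$, is included under the convention $|z|^0=1$.
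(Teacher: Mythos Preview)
Your proof is correct and takes a genuinely different route from the paper. The paper argues by induction on $k$: for $k=1$ it sets $F(s)=\tfrac12|\psi(s,x)-\psi(t,x)|^2\geq0$, observes $F(t)=0$, and uses integration by parts on $\int_0^t(t-s)^{-\alpha}F'(s)\,ds$ to obtain the base case; the induction step then applies the case $k-1$ to $\tilde\psi=\psi^2$ followed by the case $k=1$. You instead prove the general convexity inequality $\partial_t^\alpha(F\circ\varphi)\leq F'(\varphi)\,\partial_t^\alpha\varphi$ in one stroke, via the alternative representation $\partial_t^\alpha\varphi(t)=\Gamma(1-\alpha)^{-1}\bigl[t^{-\alpha}(\varphi(t)-\varphi(0))+\alpha\int_0^t(t-s)^{-\alpha-1}(\varphi(t)-\varphi(s))\,ds\bigr]$ and the tangent-line inequality, then specialize to $F(z)=z^{2^k}$. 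Your approach is more transparent and delivers the result for any convex $C^1$ function at no extra cost; the paper's induction is tailored to the dyadic powers $2^k$ and hides the underlying convexity mechanism, though its base case is essentially the same integration-by-parts computation you perform (the function $F(s)=\tfrac12|\psi(s,x)-\psi(t,x)|^2$ is exactly what makes the tangent-line inequality appear for the quadratic). Note also that $\psi(0,x)=0$ because $\psi\in C_c^\infty((0,\infty)\times\bR^d)$, so your boundary term $t^{-\alpha}(\varphi(t)-\varphi(0))$ simplifies, matching the paper's implicit use of this fact.
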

\begin{proof}
We employ the mathematical induction. The results and proof are motivated by (4.2) of \cite{dong2019lp}. 

\textit{(Step 1).} 
First, we consider the case $k = 1$. Although the proof is in the proof of \cite[Proposition 4.1]{dong2019lp}, we include the proof for the completeness of this paper.

Let $\psi\in C_c^\infty((0,\infty)\times\bR^d)$ and $t\in(0,\infty)$ and $x\in\bR^d$. For $s\in (0,t]$, set
\begin{equation*}
F_1(s) := \frac{1}{2}|\psi(s,x)|^2,\quad F_2(s) := \psi(s,x)\psi(t,x),
\end{equation*}
and
\begin{equation*}
\begin{aligned}
F(s) 
&:= \frac{1}{2}\left( |\psi(s,x)|^2 - |\psi(t,x)|^2 \right) - (\psi(s,x) - \psi(t,x))\psi(t,x). \\
\end{aligned}
\end{equation*}
Further,
\begin{equation*}
F(s) = \frac{1}{2}|\psi(s,x) - \psi(t,x)|^2 \geq 0
\end{equation*}
on $s\leq t$, and the equality holds for $s = t$. Notice that the integration by parts implies that
\begin{equation*}
\begin{aligned}
\int_0^t (t-s)^{-\alpha}(F_1'(s) - F_2'(s))ds = \int_0^t (t-s)^{-\alpha} F'(s)ds\leq 0.
\end{aligned}
\end{equation*}
Then, by the definition of $\partial_t^\alpha$ (Definition \ref{def of time fractional derivative}), we have \eqref{integration by part} with $k = 1$.

\vspace{2mm}

\textit{(Step 2).}
Let $n\in\bN$ and assume that the results hold for $k = 1,2,\dots,n-1$. Set $\tilde \psi(t,x):=(\psi(t,x))^{2}$. Since $\tilde \psi(t,x)\in C_c^\infty((0,\infty)\times\bR^d)$, we have
\begin{equation*}
\begin{aligned}
\partial_t^\alpha (\psi(\cdot,x))^{2^n}(t)
& = \partial_t^\alpha (\tilde \psi(\cdot,x))^{2^{n-1}}(t) \\
& \leq 2^{n-1} \tilde \psi(t,x) \left| \tilde \psi(t,x) \right|^{2^{n-1}-2} \partial_t^\alpha \tilde \psi(t,x)\\
& = 2^{n-1}  \left| \psi(t,x) \right|^{2^{n}-2}\partial_t^\alpha (\psi(t,x))^2 \\
& \leq 2^n  \psi(t,x)\left|  \psi(t,x) \right|^{2^{n}-2}\partial_t^\alpha \psi(t,x).
\end{aligned}
\end{equation*}
The lemma is proved.

\end{proof}

\begin{theorem}[Theorem 8 of \cite{almeida2017gronwall}]
\label{time fractional integral gronwall ineq}
Let $\psi(t)$ be a nonnegative integrable function on $[0,T]$. For a constant $N_1$, if the function $\psi$ satisfies
$$ \psi(t) \leq \psi_0 + N_1I_t^\alpha \psi 
$$
on $t\in[0,T]$, then 
$$ \psi(t) \leq \left(1 + \sum_{k=0}^\infty  \frac{N_1^k }{\Gamma(k\alpha)}\frac{(\Gamma(\alpha)t^{\alpha})^k}{k\alpha}  \right)\psi_0
$$
on $t\in[0,T]$.
\end{theorem}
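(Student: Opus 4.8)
The statement is a fractional Gr\"onwall inequality, and the plan is to prove it by Picard-type iteration of the positive integral operator $B\phi:=N_1 I_t^\alpha\phi$, mirroring the classical Gr\"onwall argument. First I would assume $N_1\ge 0$ (the bound being vacuous otherwise) and record that $B$ is order preserving on nonnegative integrable functions, since its kernel $\tfrac{N_1}{\Gamma(\alpha)}(t-s)^{\alpha-1}$ is nonnegative, and that $B\phi$ is finite a.e.\ for $\phi\in L_1((0,T))$ by \eqref{ineq:lp ineq for time fractional int}. In this notation the hypothesis reads $\psi\le\psi_0+B\psi$.

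The heart of the proof is the iteration estimate
\[
\psi(t)\ \le\ \sum_{k=0}^{n-1}B^k\psi_0(t)+B^n\psi(t),\qquad t\in[0,T],\ n\ge1,
\]
which I would establish by induction on $n$: the case $n=1$ is the hypothesis, and to pass from $n$ to $n+1$ I apply the monotone operator $B^n$ to $\psi\le\psi_0+B\psi$, obtaining $B^n\psi\le B^n\psi_0+B^{n+1}\psi$, and substitute this into the right-hand side. Next I would evaluate the iterates on the constant $\psi_0$. Using the semigroup identity $I^\alpha I^\beta=I^{\alpha+\beta}$ recorded in the remark following the definition of $I_t^\alpha$, one has $B^k=N_1^k I_t^{k\alpha}$; and iterating the Beta-integral identity $\tfrac{1}{\Gamma(\alpha)}\int_0^t(t-s)^{\alpha-1}s^{\beta}\,ds=\tfrac{\Gamma(\beta+1)}{\Gamma(\alpha+\beta+1)}t^{\alpha+\beta}$ from $\beta=0$ gives $B^k\psi_0(t)=\psi_0 N_1^k t^{k\alpha}/\Gamma(k\alpha+1)$.

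It then remains to send the remainder to zero. Writing $B^n\psi(t)=\tfrac{N_1^n}{\Gamma(n\alpha)}\int_0^t(t-s)^{n\alpha-1}\psi(s)\,ds$ and, once $n$ is large enough that $n\alpha\ge1$, bounding $(t-s)^{n\alpha-1}\le T^{n\alpha-1}$, I would get $B^n\psi(t)\le \tfrac{N_1^n T^{n\alpha-1}}{\Gamma(n\alpha)}\|\psi\|_{L_1((0,T))}\to 0$, since $\Gamma(n\alpha)$ grows faster than any geometric sequence. Letting $n\to\infty$ yields $\psi(t)\le\psi_0\sum_{k=0}^\infty N_1^k t^{k\alpha}/\Gamma(k\alpha+1)=\psi_0 E_\alpha(N_1 t^\alpha)$, the Mittag-Leffler function, whose series converges for all $t$; rewriting $\Gamma(k\alpha+1)=k\alpha\,\Gamma(k\alpha)$ recasts this into the displayed series, with the leading $1$ being the $k=0$ term.

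I expect the remainder estimate to be the main obstacle, since $\psi$ is only assumed integrable rather than bounded, so the exponent $n\alpha-1$ can be controlled only in the regime $n\alpha\ge1$ and the decay rests entirely on the super-geometric growth of $\Gamma(n\alpha)$. The rest is $\Gamma$-function bookkeeping needed to match the stated constant exactly; the factors $\Gamma(\alpha)^k$ appearing there reflect the normalization of the fractional integral used in \cite{almeida2017gronwall}, and with the normalization of $I_t^\alpha$ fixed in this paper one lands directly on the Mittag-Leffler bound above.
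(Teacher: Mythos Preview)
The paper does not prove this theorem; it is simply quoted from \cite{almeida2017gronwall} and used as a black box in Lemma~\ref{lemma:nonlinear control}. Your Picard-iteration argument is the standard and correct proof of the fractional Gr\"onwall inequality, and it is essentially what the cited reference does. Your observation at the end is also accurate: the bound you actually derive, $\psi(t)\le\psi_0\,E_\alpha(N_1 t^\alpha)=\psi_0\sum_{k\ge0}\frac{(N_1 t^\alpha)^k}{\Gamma(k\alpha+1)}$, is sharper than the displayed one by the extra factor $\Gamma(\alpha)^k$ in each term (harmless since $\Gamma(\alpha)\ge1$ for $\alpha\in(0,1]$), and this discrepancy indeed stems from a different normalization of the fractional integral in \cite{almeida2017gronwall}; either form suffices for the application in Lemma~\ref{lemma:nonlinear control}.
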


We consider following lemma to control the remainder of the local solution $u_m$.
\begin{lemma}
\label{lemma:nonlinear control}
Let $u_m\in\cH_p^\gamma(T)$ and $v\in\cH_p^\gamma(T)$ be functions introduced in Lemmas \ref{lemma:existence uniqueness reg of local solution} and \ref{lemma:noise dominant part}, and $\tau_m^m$ be the stopping time introduced in \eqref{stopping_time_taumm}. Then, 
\begin{equation*}
\begin{aligned}
&\| u_m(t,\cdot) - v(t,\cdot) \|_{L_p}^p \\
& \leq N \sup_{t\leq T}\sup_{x\in\bR^d}|v(t,x)|^p\sup_{t\leq T}\| v(s,\cdot) \|_{L_{p}}^{p} \left[ 1+ \sum_{k=0}^\infty\frac{\left(  1+\sup_{s\leq t,x\in\bR^d}|v(s,x)|^2 \right)^k }{\Gamma(k\alpha)} \frac{(\Gamma(\alpha)T^{\alpha})^k}{k\alpha}\right]
\end{aligned}
\end{equation*}
for all $t\leq \tau_m^m$ almost surely, where $N = N(p,K)$.
\end{lemma}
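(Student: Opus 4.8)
The plan is to set $w:=u_m-v$ and run an $L_p$-energy estimate for $w$ using the fractional chain-rule inequality \eqref{integration by part} together with the fractional Gr\"onwall inequality (Theorem \ref{time fractional integral gronwall ineq}). Since $u_m$ and $v$ share the same initial data and the same stochastic part $\partial_t^\beta\int_0^t\sigma(u_m)\eta^k\,dw_s^k$, the difference $w$ solves, for each fixed $\omega$, the \emph{noise-free} equation $\partial_t^\alpha w = Lw + \bar b^i\big(u_m^2\rho_m(u_m)\big)_{x^i}$ with $w(0,\cdot)=0$. The decisive simplification is that on $\{t\le\tau_m^m\}$ we have $\sup_{x}|u_m(t,x)|\le m$ by the definition \eqref{stopping_time_taumm} of $\tau_m^m$, hence $\rho_m(u_m)=1$ identically and the forcing reduces to $\bar b^i(u_m^2)_{x^i}$; this is exactly what keeps every constant below independent of $m$, since no derivative of $\rho_m$ (which would cost a power of $m$) ever appears.

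First I would fix $t\le\tau_m^m$, apply \eqref{integration by part} with $2^k=p$ pointwise in $x$, and integrate in $x$ (interchanging $\partial_t^\alpha$ with $\int_{\bR^d}dx$ by Fubini) to obtain
\begin{equation*}
\partial_t^\alpha\|w(t,\cdot)\|_{L_p}^p \le p\int_{\bR^d} w|w|^{p-2}\big(Lw + \bar b^i(u_m^2)_{x^i}\big)\,dx.
\end{equation*}
The elliptic term is handled by the classical $L_p$ computation: integrating by parts and using the ellipticity \eqref{ellipticity_of_leading_coefficients_white_noise_in_time} produces the good term $-c\int|w|^{p-2}|\nabla w|^2\,dx$ plus lower-order contributions bounded, via Young's inequality and the boundedness \eqref{boundedness_of_deterministic_coefficients_white_noise_in_time}, by $\ep\int|w|^{p-2}|\nabla w|^2\,dx + N\|w\|_{L_p}^p$. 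For the Burgers term I would integrate by parts in $x^i$, which is legitimate precisely because $\bar b^i$ is independent of $x^i$ (Remark \ref{rmk: bar b is independent of xi}), and then expand $u_m^2=w^2+2wv+v^2$. The crucial point is that the $w^2$-piece is a perfect derivative, $|w|^{p-2}w^2 w_{x^i}=\tfrac{1}{p+1}\partial_{x^i}(|w|^pw)$, so that $\bar b^i\int_{\bR^d}\partial_{x^i}(|w|^pw)\,dx=0$ by the fundamental theorem of calculus and the spatial decay of $w$; thus the genuinely nonlinear contribution vanishes.

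It then remains to absorb the two mixed terms. The term $\int|w|^{p-2}ww_{x^i}v\,dx$ is bounded by Young's inequality by $\ep\int|w|^{p-2}|\nabla w|^2\,dx + N\sup_{x}|v|^2\,\|w\|_{L_p}^p$, and the term $\int|w|^{p-2}w_{x^i}v^2\,dx$ by $\ep\int|w|^{p-2}|\nabla w|^2\,dx + N\|w\|_{L_p}^p + N\int_{\bR^d}v^{2p}\,dx$, where in the last integral I use $\int_{\bR^d}v^{2p}\,dx\le \sup_{x}|v|^p\,\|v\|_{L_p}^p$. Choosing $\ep$ small enough to absorb all the gradient terms into the good elliptic term yields, on $\{t\le\tau_m^m\}$,
\begin{equation*}
\partial_t^\alpha\|w(t,\cdot)\|_{L_p}^p \le N\big(1+\sup_{s\le t,x}|v|^2\big)\|w(t,\cdot)\|_{L_p}^p + N\sup_{s\le t,x}|v|^p\,\sup_{s\le t}\|v(s,\cdot)\|_{L_p}^p,
\end{equation*}
with $N=N(p,K)$. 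Applying $I_t^\alpha$ and using $w(0,\cdot)=0$ (so that $I_t^\alpha\partial_t^\alpha\|w\|_{L_p}^p=\|w\|_{L_p}^p$) converts this into the hypothesis $\psi\le\psi_0+N_1 I_t^\alpha\psi$ of Theorem \ref{time fractional integral gronwall ineq} with $\psi=\|w\|_{L_p}^p$, $N_1\sim 1+\sup_{s\le t,x}|v|^2$ and $\psi_0\sim \sup_{s\le t,x}|v|^p\,\sup_{s\le t}\|v\|_{L_p}^p$; running Gr\"onwall on $[0,t]$ for each fixed $t$ produces exactly the asserted series bound.

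The hard part will be the rigorous justification rather than the algebra. The chain-rule inequality \eqref{integration by part} is stated only for $\psi\in C_c^\infty$, so $w$ must first be mollified in $(t,x)$, the estimate carried out for the approximations, and the limit taken; one must also justify interchanging $\partial_t^\alpha$ with the spatial integral and the vanishing of the boundary terms coming from the fundamental theorem of calculus, both of which rest on the continuity, boundedness and spatial decay of $u_m$ and $v$ supplied by Lemma \ref{lemma:existence uniqueness reg of local solution} and Corollary \ref{corollary:continuous embedding of solution}. The other delicate point, already flagged above, is to keep every constant free of $m$: this is guaranteed only because on $\{t\le\tau_m^m\}$ the cutoff is inactive, so that effectively $G(z)=z^2$ rather than $z^2\rho_m(z)$.
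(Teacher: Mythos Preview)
Your proposal is correct and follows essentially the same route as the paper: set $w_m=u_m-v$, use the fractional chain-rule inequality (Lemma \ref{lemma: chain ineq}) on $|w_m|^p$, integrate by parts in $x$ using ellipticity and the fact that $\bar b^i$ is independent of $x^i$, exploit that the pure $w^2$-contribution is an exact $x^i$-derivative, absorb the cross terms by Young's inequality, and close with the fractional Gr\"onwall inequality (Theorem \ref{time fractional integral gronwall ineq}).

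One technical difference worth noting: the paper reverses your order of operations. Rather than writing $\partial_t^\alpha\|w_m\|_{L_p}^p$ and then applying $I_t^\alpha$ at the end, the paper applies $I_t^\alpha$ \emph{pointwise in $x$} immediately after the chain-rule step (so one works with $|w_m^{(\varepsilon)}(t,x)|^p\le I_t^\alpha[\,\cdots\,]$), and only then integrates over $\bR^d$. This sidesteps having to justify the interchange of $\partial_t^\alpha$ with $\int_{\bR^d}dx$ (which is not literally Fubini, since $\partial_t^\alpha$ involves a $t$-derivative), and it also makes the passage to the limit in the two-layer approximation---first $w_m^n\in C_c^\infty((0,\infty)\times\bR^d)$ approximating $w_m$ in $L_p((0,t);H_p^\gamma)$, then an additional spatial mollification $(\cdot)^{(\varepsilon)}$ to enable integration by parts with the rough coefficients $a^{ij}$---more transparent. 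Your final paragraph already anticipates that this approximation is the real work; the paper's ordering is simply the cleanest way to carry it out.
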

\begin{proof}
Set 
\begin{equation*}
w_m := u_m - v\quad\text{and}\quad f_m:=L w_m + \bar b^i((u_m)^2 \rho_m(u_m))_{x^i}.
\end{equation*}
Then, we have $f_m\in\bH_p^{\gamma-2}(T)$ since $w_m,u_m\in \bH_p^{\gamma}(T)$ and estimates similar to \eqref{estimate for local sol 3}. Additionally, $\partial_t^\alpha w_m = f_m$. Let $(\omega,t)\in\opar0,\tau_m^m\cbrk$. Due to \cite[Remark 2.9]{kim2019sobolev}, there exists $w_m^n\in C_c^\infty((0,\infty)\times\bR^d)$ such that $w_m^n\to w_m$ in $L_p((0,t); H_p^{\gamma})$, and $\partial_t^\alpha w_m^n$ is a Cauchy in $L_p((0,t); H_p^{\gamma-2})$. Define 
$$f_m^n := \partial_t^\alpha w_m^n.
$$ 
Moreover, $f_m$ is the limit of $f_m^n$ as $n\to\infty$ in $L_p((0,t); H_p^{\gamma-2})$ (see \cite[Remark 2.9]{kim2019sobolev}).  Choose a nonnegative function $\zeta\in C_c^\infty(\bR^d)$ with a unit integral and set $\zeta_\ep(x) := \ep^{-d}\zeta(x/\ep)$ for $\ep>0$. For $h\in L_{1,loc}(\bR^d)$, set $h^{(\ep)}(x) := \int_{\bR^d} h(y)\zeta_{\ep}(x-y)dy.$

Next, let $\ep>0$ and $x\in\bR^d$. Since $w_m^{n(\ep)}\in C_c^\infty((0,\infty)\times\bR^d)$ and $p = 2^k$, Lemma \ref{lemma: chain ineq} yields
\begin{equation}
\label{apply iteration lemma}
\begin{aligned}
\frac{1}{p}\partial_t^\alpha \left(w_m^{n(\ep)}(\cdot,x)\right)^p(t) 
&\leq  f_m^{n(\ep)}  (t,x) w_m^{n(\ep)}(t,x) \left|w_m^{n(\ep)}(t,x) \right|^{p-2}
\end{aligned}
\end{equation}
on $t\in(0,\infty)$.
Additionally, as $w_m^{n}(0,x) = 0$, we have 
\begin{equation}
\label{initial data 0}
w_m^{n(\ep)}(0,x) = 0.
\end{equation}
Thus, if we take stochastic integral $I_t^\alpha$ on both sides of \eqref{apply iteration lemma}, we have
\begin{equation}
\label{remainder part integral form}
\begin{aligned}
\frac{1}{p} \left|w_m^{n(\ep)}(t,x)\right|^p 
\leq I_t^{\alpha}\left[ f_m^{n(\ep)}(\cdot,x)  w_m^{n(\ep)}(\cdot,x) \left| w_m^{n(\ep)}(\cdot,x) \right|^{p-2} \right]
\end{aligned}
\end{equation}
due to $\left( w_m^{n(\ep)} \right)^p\in C_c^\infty((0,\infty)\times\bR^d)$, \eqref{initial data 0}, and Remark \ref{rmk:prop of fractional calculus}.
Observe that \eqref{ineq:lp ineq for time fractional int} with $q = \infty$ and the H\"older inequality imply that
\begin{equation}
\label{approximation of integral form of remainder part 1}
\begin{aligned}
&\left\| I^\alpha_{\cdot} \left[  f_m^{n(\ep)}(\cdot,x)w^{n(\ep)}_m(\cdot,x) \left| w^{n(\ep)}_m(\cdot,x) \right|^{p-2} - f_m^{(\ep)}(\cdot,x) w^{(\ep)}_m(\cdot,x)\left| w^{(\ep)}_m(\cdot,x) \right|^{p-2} \right] \right\|_{L_1((0,t))} \\
& \leq \int_0^t \left|  f_m^{n(\ep)}(s,x) w^{n(\ep)}_m(s,x)\left| w^{n(\ep)}_m(s,x) \right|^{p-2}  - f_m^{(\ep)}(s,x) w^{(\ep)}_m(s,x) \left| w^{(\ep)}_m(s,x) \right|^{p-2}   \right| ds \\
& \leq N\int_0^t \left|     f_m^{n(\ep)}(s,x)  - f_m^{(\ep)}(s,x) \right| \left| w^{n(\ep)}_m(s,x) \right|^{p-1} ds \\
&\quad+ N\int_0^t\left|f_m^{(\ep)}(s,x) \left[ w^{n(\ep)}_m(s,x)\left| w^{n(\ep)}_m(s,x) \right|^{p-2} -  w^{(\ep)}_m(s,x)\left| w^{(\ep)}_m(s,x) \right|^{p-2} \right]  \right| ds \\
& \leq N \bigg[A_n\left\| w^{n(\ep)}_m(\cdot,x) \right\|_{L_p(0,t)}^{2} 
+  B_n C_n\left\| f_m^{(\ep)}(\cdot,x) \right\|_{L_p(0,t)} \bigg] \left\| w_m^{n(\ep)}(\cdot,x) \right\|_{L_p(0,t)}^{p-3},  \\
\end{aligned}
\end{equation}
where 
\begin{equation*}
\begin{aligned}
A_n = \left\| f_m^{n(\ep)}(\cdot,x)  - f_m^{(\ep)}(\cdot,x) \right\|_{L_p(0,t)} \quad
B_n = \left\| w_m^{n(\ep)}(\cdot,x) - w_m^{(\ep)}(\cdot,x) \right\|_{L_p(0,t)},
\end{aligned}
\end{equation*}
and
$$ C_n = \left\| w_m^{n(\ep)}(\cdot,x) \right\|_{L_p(0,t)} + \left\| w_m^{(\ep)}(\cdot,x) \right\|_{L_p(0,t)}.
$$
Moreover,
\begin{equation}
\label{approximation of integral form of remainder part 2}
A_n,B_n\to0\quad\text{and}\quad C_n \to 2  \left\| w_m^{(\ep)}(\cdot,x) \right\|_{L_p(0,t)}\quad\text{as}\quad n\to \infty
\end{equation}
since $w_m^{n}\to w_m$ and $f_m^n \to f_m$ in $L_p((0,t);H_p^\gamma)$. Then, by applying \eqref{approximation of integral form of remainder part 2} to \eqref{approximation of integral form of remainder part 1}, we have
\begin{equation*}
\left\| I^\alpha_{\cdot} \left[  f_m^{n(\ep)}(\cdot,x)w^{n(\ep)}_m(\cdot,x) \left| w^{n(\ep)}_m(\cdot,x) \right|^{p-2} - f_m^{(\ep)}(\cdot,x) w^{(\ep)}_m(\cdot,x)\left| w^{(\ep)}_m(\cdot,x) \right|^{p-2} \right] \right\|_{L_1((0,t))} \to 0
\end{equation*}
as $n\to\infty$. Therefore, there exists a sequence $n_l$ such that $w^{n_l(\ep)}_m(\cdot,x) \to  w^{(\ep)}_m(\cdot,x)$ and $I_\cdot^\alpha \left[ f_m^{n_l(\ep)} w^{n_l(\ep)}_m \left| w^{n_l(\ep)}_m \right|^{p-2} \right] \to I_\cdot^\alpha \left[ f_m^{(\ep)} w^{(\ep)}_m \left| w^{(\ep)}_m \right|^{p-2} \right]$ almost everywhere on $[0,t]$. Furthermore, the convergence holds everywhere on $[0,t]$ due to the continuity in $t$. Then, by considering sequence $n_l$ instead of $n$ and letting $l\to\infty$ for \eqref{remainder part integral form}, we have
\begin{equation*}
\begin{aligned}
\frac{1}{p} \left|w_m^{(\ep)}(t,x)\right|^p \leq I_t^{\alpha}\left[  f_m^{(\ep)}(\cdot,x)  w^{(\ep)}_m(\cdot,x) \left|  w^{(\ep)}_m(\cdot,x)  \right|^{p-2} \right].
\end{aligned}
\end{equation*}
Since $t \leq \tau_m^m$, $\rho_m(u_m) = 1$. By integrating with respect to $x$, we have
\begin{equation}
\label{controlling coefficients - start}
\begin{aligned}
& \frac{\Gamma(\alpha)}{p}\int_{\bR^d}\left| w_m^{(\ep)}(t,x) \right|^p dx \\
&\leq \int_0^t (t-s)^{\alpha-1} \int_{\bR^d} ( Lw_m(s,\cdot) )^{(\ep)}(x) w_m^{(\ep)}(s,x) \left|w_m^{(\ep)}(s,x) \right|^{p-2} dxds \\
&\quad + \int_0^t (t-s)^{\alpha-1} \int_{\bR^d}\left[ \bar b^i(s,\bar x^i)\left(|w_m(s,\cdot)+v(s,\cdot)|^2\right)_{x^i}^{{(\ep)}}(x) \right]  w_m^{(\ep)}(s,x) \left| w_m^{(\ep)}(s,x) \right|^{p-2} dxds.
\end{aligned}
\end{equation}
Furthermore, by integration by parts, we obtain
\begin{equation}
\label{controlling coefficients - 0}
\begin{aligned}
& \int_{\bR^d}\left[ ( Lw_m(s,\cdot) )^{(\ep)}(x) + \bar b^i\left(|w_m(s,\cdot)+v(s,\cdot)|^2\right)_{x^i}^{{(\ep)}}(x) \right]  w_m^{(\ep)}(s,x) \left| w_m^{(\ep)}(s,x) \right|^{p-2} dx \\
& \leq -(p-1)\int_{\bR^d} \left(a^{ij}w_{m}\right)_{x^j}^{(\ep)}(s,x)   \left|w_m^{(\ep)}(s,x) \right|^{p-2} w^{(\ep)}_{mx^i}(s,x) dx \\
&\quad + (p-1)\int_{\bR^d}   \left(\left( 2a^{ij}_{x^j} - b^i \right) w_m\right)^{(\ep)}(s,x)  \left|w_m^{(\ep)}(s,x) \right|^{p-2} w^{(\ep)}_{mx^i}(s,x) dx \\
&\quad + \int_{\bR^d} \left(\left(a^{ij}_{x^ix^j} - b^i_{x^i} + c \right)w_m\right)^{(\ep)}(s,x)  w_m^{(\ep)}(s,x)  \left| w_m^{(\ep)}(s,x) \right|^{p-2}dx\\
&\quad - (p-1)\int_{\bR^d} \bar{b}^i(s,\bar x^i)\left( (w_m(s,\cdot) + v(s,\cdot))^{2} \right)^{(\ep)}(x) \left| w_m^{(\ep)}(s,x) \right|^{p-2} w_{mx^i}^{(\ep)}(s,x)dx.
\end{aligned}
\end{equation}
Additionally, observe that
\begin{equation}
\label{controlling coefficients - 1}
\begin{aligned}
&\left| \left(a^{ij}w_{m}\right)_{x^j}^{(\ep)}(s,x) - a^{ij}(s,x)w_{mx^j}^{(\ep)}(s,x) \right| \\
&\quad = \ep^{-1}\left|\int_{\bR^d}  \left( a^{ij}(s,x-\ep y) - a^{ij}(s,x) \right)w_m(s,x-\ep y)\zeta_{y^j}(y)  dy \right|\\
&\quad \leq N(K)\int_{\bR^d} |w_m(s,x-\ep y)||y||\zeta_{y}(y)|dy,
\end{aligned}
\end{equation}
and by \eqref{ellipticity_of_leading_coefficients_white_noise_in_time},
\begin{equation}
\label{controlling coefficients - 2}
\begin{aligned}
&-\int_{\bR^d}  a^{ij}(s,x) w_{mx^i}^{(\ep)}(s,x)w_{mx^j}^{(\ep)}(s,x) \left|w_m^{(\ep)}(s,x)\right|^{p-2} dx \\
&\quad\leq -K^{-1} \int_{\bR^d} \left|w_m^{(\ep)}(s,x)\right|^{p-2} \left|w_{mx}^{(\ep)}(s,x)\right|^2  dx.
\end{aligned}
\end{equation}
Thus, by combining \eqref{controlling coefficients - 1} and \eqref{controlling coefficients - 2}
\begin{equation}
\label{controlling coefficients - 3}
\begin{aligned}
&-\int_{\bR^d} \left(a^{ij}w_{m}\right)_{x^j}^{(\ep)}(s,x)\left|w_m^{(\ep)}(s,x)\right|^{p-2}  w_{mx^i}^{(\ep)}(s,x)  dx \\
&\quad = -\int_{\bR^d} \left|w_m^{(\ep)}(s,x)\right|^{p-2}  w_{mx^i}^{(\ep)}(s,x) \left[ \left(aw_{m}\right)_{x^j}^{(\ep)}(s,x) - a(s,x)w_{mx^j}^{(\ep)}(s,x) \right] dx \\
&\quad\quad - \int_{\bR^d} a^{ij}(s,x)  w_{mx^i}^{(\ep)}(s,x)w_{mx^j}^{(\ep)}(s,x) \left|w_m^{(\ep)}(s,x)\right|^{p-2} dx \\
&\quad \leq N\int_{\bR^d} \left|w_m^{(\ep)}(s,x)\right|^{p-2} \left| w_{mx^i}^{(\ep)}(s,x) \right|  \int_{\bR^d} |w_m(s,x-\ep y)||y||\zeta_{y}(y)|dy dx \\
&\quad\quad- K^{-1}\int_{\bR^d} \left|w_m^{(\ep)}(s,x)\right|^{p-2}  \left| w_{mx}^{(\ep)}(s,x) \right|^2 dx\\
&\quad \leq N\int_{\bR^d} \left|w_m^{(\ep)}(s,x)\right|^{p-2}  \left( \int_{\bR^d} |w_m(s,x-\ep y)||y||\zeta_{y}(y)|dy\right)^2 dx \\
&\quad\quad- \frac{1}{2}K^{-1}\int_{\bR^d} \left|w_m^{(\ep)}(s,x)\right|^{p-2}  \left| w_{mx}^{(\ep)}(s,x) \right|^2 dx,
\end{aligned}
\end{equation}
where $N = N(K)$.
Moreover,
\begin{equation}
\label{controlling coefficients - 4}
\begin{aligned}
\left| \left(\left(2a^{ij}_{x^j} - b^i\right)w_m\right)^{(\ep)}(s,x) \right| 
&= \left| \int_{\bR^d}\left(2a^{ij}_{y^j}(s,y) - b^i(s,y)\right)w_m(s,y)\zeta_\ep(x-y)dy \right| \\
&\leq K\int_{\bR^d}|w_m(s,y)|\zeta_\ep(x-y)dy \\
&= K(|w_m(s,\cdot)|)^{(\ep)}(x)
\end{aligned}
\end{equation}
and 
\begin{equation}
\label{controlling coefficients - 5}
\left| \left(\left( a^{ij}_{x^ix^j} - b^i_{x^i} + c \right)w_m\right)^{(\ep)}(s,x) \right| \leq  K(|w_m(s,\cdot)|)^{(\ep)}(x).
\end{equation}
Thus, by applying H\"older's inequality, \eqref{controlling coefficients - 3}, \eqref{controlling coefficients - 4}, and \eqref{controlling coefficients - 5} to \eqref{controlling coefficients - 0}, we have
\begin{equation}
\label{controlling coefficients - 6}
\begin{aligned}
&\int_{\bR^d}\left[  ( L w_m(s,\cdot) )^{(\ep)}(x) + \bar b^i\left(|w_m(s,\cdot)+v(s,\cdot)|^2\right)_{x^i}^{{(\ep)}}(x) \right] w^{(\ep)}_m(s,x) \left| w^{(\ep)}_m(s,x) \right|^{p-2} dx \\
&\quad \leq N\int_\bR \left|w_m^{(\ep)}(s,x)\right|^{p-2}  \left( \int_{\bR} |w_m(s,x-\ep y)||y||\zeta_{y}(y)|dy\right)^2 dx \\
&\quad\quad - \frac{p-1}{4K}\sum_{i}\int_\bR \left|w_m^{(\ep)}(s,x)\right|^{p-2}  \left| w_{mx^i}^{(\ep)}(s,x) \right|^2 dx\\
&\quad\quad + N\sum_{i}\int_{\bR^d}  \left( (|w_m(s,\cdot)|)^{(\ep)}(x) \right)^2 \left| w_m^{(\ep)}(s,x) \right|^{p-2}  dx \\
&\quad\quad + N\int_{\bR^d} (|w_m(s,\cdot)|)^{(\ep)}(x)   \left| w_m^{(\ep)}(s,x) \right|^{p-1}dx.\\
&\quad\quad - (p-1)\sum_{i}\int_{\bR^d}  \bar b^i(s,\bar x^i)\left(   (w_m(s,\cdot) + v(s,\cdot))^{2} \right)^{(\ep)}(x)  \left| w_m^{(\ep)}(s,x) \right|^{p-2} w_{mx^i}^{(\ep)}(s,x)dx,
\end{aligned}
\end{equation}
where $N = N(K)$.
Furthermore, note that 
\begin{equation}
\label{int by part}
\int_{\bR^d}\left|w_m^{(\ep)}(s,x)\right|^{p} w_{mx^i}^{(\ep)}(s,x)dx  = 0\quad\text{for}\quad s\leq t.
\end{equation} 
Indeed, take a nonnegative smooth function $\phi\in C_c^\infty(\bR^d)$ such that $\phi(x) = 1$ on $|x|<1$, $\phi(x) = 0$ on $|x|>2$, and $\sup_{x\in\bR^d}|\phi'(x)|\leq 2$. Then, integration by parts yields
\begin{equation*}
\begin{aligned}
&\int_{\bR^d} \left|w_m^{(\ep)}(s,x)\right|^{p} w_{mx^i}^{(\ep)}(s,x)\phi(x/n) dx \\
&\quad = -p\int_{\bR^d} \left|w_m^{(\ep)}(s,x)\right|^{p}w_{mx^i}^{(\ep)}(s,x)\phi(x/n) dx - \frac{1}{n}\int_{\bR^d}\left| w_m^{(\ep)}(s,x) \right|^p w_m^{(\ep)}(s,x)\phi'(x/n)dx.
\end{aligned}
\end{equation*}
Thus, we have
\begin{equation}
\label{proof by int by part}
\limsup_{n\to\infty}\left| \int_{\bR^d} \left|w_m^{(\ep)}(s,x)\right|^{p} w_{mx^i}^{(\ep)}(s,x)\phi(x/n) dx \right|
 \leq \limsup_{n\to\infty}\frac{2}{n(p+1)}\int_{\bR^d}\left| w_m^{(\ep)}(s,x) \right|^{p+1} dx=0
\end{equation}
and \eqref{proof by int by part} yields \eqref{int by part}. Then, from the last term of \eqref{controlling coefficients - 6}, by applying \eqref{int by part} and the H\"older's inequality, we have
\begin{equation}
\label{controlling coefficients - 7}
\begin{aligned}
&\left|\sum_{i} \int_{\bR^{d-1}}\bar b^i(s,\bar x^i)\int_{\bR} \left(\left|  w_m(t,\cdot)+v(t,\cdot)\right|^{2}  \right)^{(\ep)} (x)  \left| w_m^{(\ep)}(t,x) \right|^{p-2} w_{mx^i}^{(\ep)}(t,x) dx^i d\bar x^i \right|\\
&\quad\leq N \sum_{i} \int_{\bR^{d}} \left|\left(\left|  w_m(t,\cdot)+v(t,\cdot)\right|^{2}  \right)^{(\ep)} (x)- \left|w_{m}^{(\ep)}(t,x)\right|^{2}\right|  \left| w_m^{(\ep)}(t,x) \right|^{p-2} \left| w_{mx^i}^{(\ep)}(t,x) \right| dx \\
&\quad \leq N\int_{\bR^d} \left(\left(\left|  w_m(t,\cdot)+v(t,\cdot)\right|^{2}  \right)^{(\ep)} (x)- \left|w_{m}^{(\ep)}(t,x)\right|^{2}\right)^2  \left| w_m^{(\ep)}(t,x) \right|^{p-2} dx \\
&\quad\quad + \frac{1}{8KN} \sum_{i} \int_{\bR^d}\left| w_{mx^i}^{(\ep)}(t,x) \right|^2  \left| w_m^{(\ep)}(t,x) \right|^{p-2} dx,
\end{aligned}
\end{equation}
where $N = N(K)$.
Then, by applying \eqref{controlling coefficients - 6} and \eqref{controlling coefficients - 7} to \eqref{controlling coefficients - start}, we have
\begin{equation*}
\begin{aligned}
&\int_{\bR^d}\left|w_m^{(\ep)}(t,x)\right|^p dx  \\
&\quad\leq NI^\alpha_t \int_\bR \left|w_m^{(\ep)}(s,x)\right|^{p-2}  \left( \int_{\bR} |w_m(s,x-\ep y)||y||\zeta_{y}(y)|dy\right)^2 dx \\
&\quad\quad + NI^\alpha_t\int_{\bR^d}  \left| (|w_m(s,\cdot)|)^{(\ep)}(x) \right|^2 \left| w_m^{(\ep)}(s,x) \right|^{p-2} + (|w_m(s,\cdot)|)^{(\ep)}(x)   \left| w_m^{(\ep)}(s,x) \right|^{p-1} dx \\
&\quad\quad + NI^\alpha_t\int_{\bR^d} \left(\left(\left|  w_m(t,\cdot)+v(t,\cdot)\right|^{2}  \right)^{(\ep)} (x)- \left|w_{m}^{(\ep)}(t,x)\right|^{2}\right)^2  \left| w_m^{(\ep)}(t,x) \right|^{p-2} dx,
\end{aligned}
\end{equation*}
where $N = N(p,K)$.
By letting $\ep\downarrow0$, we have
\begin{equation*}
\begin{aligned}
&\|w_m(t,\cdot)\|_{L_p}^p \\
&\leq NI^\alpha_t \int_\bR \left|w_m(\cdot,x)\right|^{p} dx + NI^\alpha_t\int_{\bR^d} \left( \left|  w_m(\cdot,x)+v(\cdot,x)\right|^{2}  - \left|w_{m}(\cdot,x)\right|^{2}\right)^2  \left| w_m(\cdot,x) \right|^{p-2} dx \\
& \leq NI^\alpha_t \int_\bR \left|w_m(\cdot,x)\right|^{p} dx + NI^\alpha_t\int_{\bR^d} \left| v(\cdot,x) \right|^2\left| w_m(\cdot,x) \right|^p +\left| v(\cdot,x) \right|^{4} \left|w_m(\cdot,x) \right|^{p-2} dx \\
&\leq N\left(1 + \sup_{s\leq t,x\in\bR^d}|v(s,x)|^2 \right) I_t^\alpha \left\| w_m(\cdot,\cdot) \right\|_{L_p}^p + N \sup_{s\leq t}\| v(s,\cdot) \|_{L_{2p}}^{2p}.
\end{aligned}
\end{equation*}
for all $t\leq \tau_m^m$. Then, by Theorem \ref{time fractional integral gronwall ineq}, we obtain
\begin{equation*}
\| w_m(t,\cdot) \|_{L_p}^p \leq N \sup_{s\leq t}\| v(s,\cdot) \|^{2p}_{L_{2p}} \left[ 1+ \sum_{k=0}^\infty\frac{\left(  1+\sup_{s\leq t,x\in\bR^d}|v(s,x)|^2 \right)^k }{\Gamma(k\alpha)} \frac{(\Gamma(\alpha)T^{\alpha})^k}{k\alpha}\right]
\end{equation*}
for all $t\leq \tau_m^m$.  The lemma is proved.

\end{proof}

Finally, we demonstrate that the global solution candidate does not explode in a finite time.
\begin{lemma}
\label{lemma:key_lemma}
For any $T<\infty$, we have
\begin{equation*}
\lim_{R\to\infty}P\left( \left\{ \omega\in\Omega:\sup_{t\leq T,x\in\bR^d}|u(t,x)| > R \right\} \right) = 0.
\end{equation*}

\end{lemma}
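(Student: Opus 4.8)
The plan is to reduce everything to showing $P(\tau_m^m<T)\to0$ as $m\to\infty$. By the construction in Remark \ref{def_u}, $u=u_m=u_R$ on $[0,\tau_R^R]$ for every $m\ge R$ (using \eqref{local_existence_time_increasing}), and $\sup_x|u_R(t,\cdot)|\le R$ there; hence on $\{\tau_R^R=T\}$ we have $\sup_{t\le T,x}|u(t,x)|\le R$, so that $\{\sup_{t\le T,x}|u(t,x)|>R\}\subseteq\{\tau_R^R<T\}$. Since $\tau_R^R$ is exactly the stopping time $\tau_m^m$ in \eqref{stopping_time_taumm} evaluated at $m=R$, the desired conclusion follows once $P(\tau_m^m<T)\to0$.

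The engine is a bound on $\|u_m\|_{\cH_p^\gamma}$ that is uniform in $m$ on a localized interval. Writing $u_m=v+w_m$ with $v$ the noise-dominating part of Lemma \ref{lemma:noise dominant part}, the cut-off is inactive on $[0,\tau_m^m]$ (so $\rho_m(u_m)=1$), and Lemma \ref{lemma:nonlinear control} gives the pathwise bound $\sup_{t\le\tau_m^m}\|u_m(t,\cdot)\|_{L_p}^p\le C(\omega)$ with $C$ almost surely finite and independent of $m$. Because $C$ is a Mittag--Leffler-type functional of $v$ and need not be integrable, I localize by the stopping time $\gamma_K:=\inf\{t\ge0:\sup_x|v(t,x)|\ge K\text{ or }\|v(t,\cdot)\|_{L_p}\ge K\}\wedge T$, which satisfies $\gamma_K\uparrow T$ almost surely; on $[0,\gamma_K]$ the right-hand side of Lemma \ref{lemma:nonlinear control} is dominated by a deterministic constant $C_K$, so $\sup_{t\le\tau_m^m\wedge\gamma_K}\|u_m(t,\cdot)\|_{L_p}^p\le C_K$.

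Next I upgrade this $L_p$ control to a uniform $\cH_p^\gamma$ bound. On $[0,\tau_m^m\wedge\gamma_K]$ the function $u_m$ solves the linear equation with free terms $f=\bar b^i(u_m^2)_{x^i}$ and $g=\sigma(u_m)\bm{\eta}$, so the a priori estimate behind Lemma \ref{lemma:quasi-linear results} and \eqref{quasi-linear_estimate} applies. The stochastic term is handled as in \eqref{bound of stochastic part}: $\|g\|_{\bH_p^{\gamma-2+c_0}(\tau_m^m\wedge\gamma_K,l_2)}^p\le N\|h\|_{\bL_p}^p$. For the nonlinearity, Lemma \ref{lemma:properties of bessel potential spaces} \eqref{bounded_operator} and \eqref{pointwise_multiplier} together with the kernel of Remark \ref{Kernel} give $\|\bar b^i(u_m^2)_{x^i}\|_{H_p^{\gamma-2}}\le N\|u_m^2\|_{H_p^{\gamma-1}}\le N\|R_{1-\gamma}\ast u_m^2\|_{L_p}$, and the crucial step is to estimate this by Young's inequality as $\|R_{1-\gamma}\ast u_m^2\|_{L_p}\le\|R_{1-\gamma}\|_{L_{p'}}\|u_m^2\|_{L_{p/2}}=N\|u_m(t,\cdot)\|_{L_p}^2$, where $1/p'=1-1/p$ and $R_{1-\gamma}\in L_{p'}$ holds precisely because $p>d/(1-\gamma)$ in \eqref{conditions of p in theorem} forces $\gamma+d/p<1$. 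This introduces no factor of $m$ and uses only the already-controlled $L_p$ norm, whence $\|f\|_{\bH_p^{\gamma-2}(\tau_m^m\wedge\gamma_K)}^p\le N\bE\int_0^{\tau_m^m\wedge\gamma_K}\|u_m(t,\cdot)\|_{L_p}^{2p}\,dt\le NTC_K^2$ and therefore $\|u_m\|_{\cH_p^\gamma(\tau_m^m\wedge\gamma_K)}^p\le N(\|u_0\|_{U_p^\gamma}^p+TC_K^2+\|h\|_{\bL_p}^p)=:D_K$, uniformly in $m$.

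Finally I invoke the continuity embedding \eqref{continuity of local solutions}, which gives $\bE\sup_{t\le\tau_m^m\wedge\gamma_K}\sup_x|u_m|^p\le ND_K$. On $\{\tau_m^m<\gamma_K\}$ one has $\tau_m^m<T$, so by the continuity of $u_m$ we have $\sup_x|u_m(\tau_m^m,\cdot)|=m$, and Chebyshev's inequality yields $m^pP(\tau_m^m<\gamma_K)\le ND_K$. Since $\{\tau_m^m<T\}\subseteq\{\tau_m^m<\gamma_K\}\cup\{\gamma_K<T\}$, letting $m\to\infty$ and then $K\to\infty$ (with $\gamma_K\uparrow T$) gives $P(\tau_m^m<T)\to0$, as required. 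The main obstacle is the uniform-in-$m$ estimate of the quadratic term above: the Lipschitz bound used for local existence costs a factor scaling like $m^p$ that would exactly cancel the gain from Chebyshev's inequality, so it is essential to measure $u_m^2$ in $L_{p/2}$ via $R_{1-\gamma}\in L_{p'}$, which is available only under $p>d/(1-\gamma)$; a secondary difficulty, resolved by the localization $\gamma_K$, is that the pathwise Mittag--Leffler $L_p$ bound from Lemma \ref{lemma:nonlinear control} has infinite expectation.
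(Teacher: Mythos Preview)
Your proof is correct and follows essentially the same approach as the paper's: localize by stopping times on $v$ (your $\gamma_K$ is precisely the paper's $\tau^1(S)\wedge\tau^2(S)$), use Lemma \ref{lemma:nonlinear control} to get a deterministic $L_p$ bound on $u_m$ on the localized interval, feed this into the linear a priori estimate from Lemma \ref{lemma:quasi-linear results} with the kernel bound on $R_{1-\gamma}$ (requiring $p>d/(1-\gamma)$) to control $\|u_m\|_{\cH_p^\gamma}$ uniformly in $m$, and conclude via \eqref{continuity of local solutions} and Chebyshev. The only cosmetic differences are that you estimate $\|R_{1-\gamma}\ast u_m^2\|_{L_p}$ directly by Young's inequality in the form $\|R_{1-\gamma}\|_{L_{p'}}\|u_m\|_{L_p}^2$, whereas the paper reaches the same bound through a H\"older--Minkowski combination, and that you phrase the endgame as $P(\tau_m^m<T)\to0$ rather than bounding $P(\sup|u|>R)$ first.
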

\begin{proof}
Let $v$ be the function introduced in Lemma \ref{lemma:noise dominant part}. Define 
\begin{equation*}
\begin{aligned}
\tau^1(S) &:=\inf\left\{ t\geq 0:\| v(t,\cdot) \|_{L_{p}}\geq S \right\}\wedge T,\quad
\tau^2(S) :=\inf\left\{ t\geq 0:\sup_{x\in\bR^d}|v(t,x)|\geq S \right\}\wedge T,
\end{aligned}
\end{equation*}
and
\begin{equation*}
\tau_m^0(S) := \tau_m^m \wedge \tau^1(S) \wedge \tau^2(S),
\end{equation*}
where $\tau_m^m$ is the stopping time introduced in \eqref{stopping_time_taumm}.
Set $r := \frac{p}{p-1}$. 
Then, by Lemmas \ref{lemma:quasi-linear results} and \ref{lemma:properties of bessel potential spaces}, \eqref{pointwise_multiplier}, H\"older inequality, and Minkowski inequality, we have
\begin{equation}
\label{solution norm control}
\begin{aligned}
&\| u_m \|^p_{\cH_{p}^{\gamma}(\tau_m^0(S) )} - N\| u_0 \|^p_{U_p^{\gamma}}\\
&\quad\leq N\left\| \sum_{i}\bar b^i (u_m^2\rho_m(u_m))_{x^i}  \right\|_{\bH_p^{\gamma-2}(\tau_m^0(S))}^p + N\| \sigma(u_m)\bm{\eta} \|_{\bH_p^{\gamma-2+c_0}(\tau_m^0(S),l_2)}^p \\
&\quad\leq N\left\|  u_m^{2} \right\|_{\bH_p^{\gamma-1}(\tau_m^0(S))}^p + N\| \sigma(u_m)\bm{\eta} \|_{\bH_p^{\gamma-2+c_0}(\tau_m^0(S),l_2)}^p \\
&\quad\leq N\bE\int_0^{\tau_m^0(S) }\int_{\bR^d} \left| \int_{\bR^d} R_{1-\gamma}(y)|u_m(s,x-y)|^{2} dy \right|^p dxds \\
&\quad\quad + N\bE\int_0^{\tau_m^0(S) }\int_{\bR^d} \left| \int_{\bR^d} \left| R_{-\gamma+2-c_0}(y) \right|^2|u_m(s,x-y)|^{2} dy \right|^{p/2} dxds \\
&\quad\leq N\bE\int_0^{\tau_m^0(S) }  \int_{\bR^d} \left| \int_{\bR^d} \left| R_{1-\gamma}(y) \right|^r|u_m(s,x-y)|^{r} dy \right|^{p/r} dx \int_{\bR^d} |u_m(s,x)|^{p}  dx  ds \\
&\quad\quad +  N\bE\int_0^{\tau_m^0(S) }  \left( \int_{\bR^d} \left|R_{-\gamma+2-c_0}(x)\right|^2 dx\right)^{p/2} \int_{\bR^d} |u_m(s,x)|^{p}  dx  ds \\
&\quad\leq N_0\bE\int_0^{\tau_m^0(S)} \left[ 1 +  \int_{\bR^d} |u_m(s,x)|^{p} dx \right]  \int_{\bR^d} |u_m(s,x)|^p dx ds,
\end{aligned}
\end{equation}
where $N_0 = N(\alpha,\beta,\gamma,d,p,K,T)\left[ \left(\int_{\bR^d} \left|R_{1-\gamma}(x)\right|^r dx\right)^{p/r}+\left( \int_{\bR^d} \left|R_{-\gamma+2-c_0}(x)\right|^2 dx\right)^{p/2} \right]$. Note that $N_0<\infty$ due to $r < \frac{d}{d+\gamma-1} $ and Remark \ref{Kernel}. Then, by Lemma \ref{lemma:nonlinear control} and the definitions of $\tau_1(S)$ and $\tau_2(S)$,
\begin{equation}
\label{lp norm is bounded}
\begin{aligned}
&\int_{\bR^d} |u_m(t,x)|^p dx \\
&\leq N\int_{\bR^d} |u_m(t,x)-v(t,x)|^p + |v(t,x)|^p dx \\
&\leq N \sup_{s\leq t}\sup_{x\in\bR^d}|v(t,x)|\sup_{s\leq t}\| v(s,\cdot) \|_{p}^{p} \left[ 1+ \sum_{k=0}^\infty\frac{\left(  1+\sup_{s\leq t,x\in\bR^d}|v(t,x)|^2 \right)^k }{\Gamma(k\alpha)} \frac{(\Gamma(\alpha)T^{\alpha})^k}{k\alpha}\right] \\
&\quad  + \int_{\bR^d} |v(t,x)|^p dx \\
&<N(p,S,K).
\end{aligned}
\end{equation}
Therefore, by combining \eqref{solution norm control} and \eqref{lp norm is bounded}, we have
\begin{equation}
\label{solution norm of u is bounded}
\| u_m \|^p_{\cH_{p}^{\gamma}(\tau_m^0(S) )}  \leq N + N\| u_0 \|^p_{U_p^{\gamma}},
\end{equation}
where $N = N(\alpha,\beta,\gamma,d,p,S,K,T)$.
It should be noted that the right-hand side of \eqref{solution norm of u is bounded} is independent of $m$. Therefore, by Chebyshev's inequality and Lemma \ref{lemma:existence uniqueness reg of local solution}, we have
\begin{equation*}
\begin{aligned}
P\left( \sup_{t\leq \tau^0_m(S)}\sup_{x\in\bR^d}|u(t,x)| > R \right) 
&\leq \frac{1}{R^p}\bE\sup_{t\leq \tau^0_m,x\in\bR^d}|u(t,x)|^p  \\
&\leq \frac{1}{R^p}\bE\sup_{t\leq \tau^0_m,x\in\bR^d}|u_m(t,x)|^p  \\
&\leq \frac{1}{R^p}\| u_m \|_{\cH_p^{\gamma}(\tau^0_m)}^p  \\
&\leq \frac{N}{R^p},
\end{aligned}
\end{equation*}
where $N = N(u_0,\alpha,\beta,\gamma,d,p,S,K,T).$
In contrast, by Lemma \ref{lemma:noise dominant part},
\begin{equation*}
\begin{aligned}
& P\left(\tau^1(S)<T\right) + P\left(\tau^2(S)<T\right) \\
&\leq P\left( \sup_{t\leq T}\| v(t,\cdot)\|_{L_{p}} > S \right) + P\left( \sup_{t\leq T}\sup_{x\in\bR^d} |v(t,x)| > S \right) \\
&\leq \frac{1}{S^p}\bE\sup_{t\leq T}\| v(t,\cdot) \|_{L_{p}}^{p} + \frac{1}{S^p}\bE\sup_{t\leq T,x\in\bR^d} |v(t,x)|^p \\
&\leq \frac{1}{S^p}N(u_0,h,\alpha,\beta,\gamma,d,p,K,T).
\end{aligned}
\end{equation*}
Thus, 
\begin{equation*}
\begin{aligned}
&P\left(\sup_{t\leq T,x\in\bR^d}|u(t,x)| > R\right)  \\
&\leq \liminf_{m\to\infty}P\left(\sup_{t\leq \tau_m^0(S),x\in\bR^d}|u(t,x)| > R\right) + P\left(\tau^1(S)<T\right) + P\left(\tau^2(S)<T\right) \\
&\leq \frac{N_1}{R^p} + \frac{N_2}{S^p},
\end{aligned}
\end{equation*}
where $N_1 = N_1(u_0,\alpha,\beta,\gamma,d,p,S,K,T)$ and $N_2 = N_2(u_0,h,\alpha,\beta,\gamma,d,p,K,T)$.
The lemma is proved by letting $R\to\infty$ and $S\to\infty$ in order.

\end{proof}

\begin{proof}[\bf{Proof of Theorem \ref{main theorem}}]

{\it Step 1. (Uniqueness). }
Suppose $u,\bar u\in \cH_{p,loc}^{\gamma}$ are nonnegative solutions of equation \eqref{eq:target equation}. By Definition \ref{def:solution space}, there are bounded stopping times $\tau_n$ $(n = 1,2,\cdots)$ 
such that
\begin{equation*}
\tau_n\uparrow\infty\quad\mbox{and}\quad u, \bar u \in \cH_{p}^{\gamma}(\tau_n).
\end{equation*}
Fix $n\in\bN$. Note that $u,\bar{u} \in C([0,\tau_n];C(\bR^d))$ almost surely and
\begin{equation}
\label{embedding in the proof of uniqueness_infinite_noise_1}
\bE\sup_{t\leq\tau_n}\sup_{x\in\bR^d}|u(t,x)|^p + \bE\sup_{t\leq\tau_n}\sup_{x\in\bR^d}|\bar u(t,x)|^p < \infty.
\end{equation}
Then, for $m \in \bN$, define
\begin{equation*}
\begin{gathered}
\tau_{m,n}^1:=\inf\left\{t\geq0:\sup_{x\in\bR^d}|u(t,x)|> m\right\}\wedge\tau_n, \\
\tau_{m,n}^2:=\inf\left\{t\geq0:\sup_{x\in\bR^d}|\bar u(t,x)|> m\right\}\wedge\tau_n,
\end{gathered}
\end{equation*}
and 
\begin{equation} 
\label{stopping_time_cutting}
\tau_{m,n}:=\tau_{m,n}^1\wedge\tau_{m,n}^2.
\end{equation}
Due to \eqref{embedding in the proof of uniqueness_infinite_noise_1}, $\tau_{m,n}^1$ and $\tau_{m,n}^2$ are well-defined stopping times; thus, $\tau_{m,n}$ is a stopping time.
Observe that $u,\bar u\in\cH_p^{\gamma}(\tau_{m,n})$ and  $\tau_{m,n}\uparrow \tau_n$ as $m\to\infty$ almost surely. Fix $m\in\bN$. Notice that $u,\bar u\in\cH_p^{\gamma}(\tau_{m,n})$ are solutions to equation
\begin{equation*}
\partial_t^\alpha v = Lv + \bar{b}^i \left( v^{2}\rho_m(v) \right)_{x^i} +\partial_t^\beta\int_0^t \sigma(v)  dW_t, \quad 0<t\leq\tau_{m,n};\quad v(0,\cdot)=u_0,
\end{equation*}
where $Lv = a^{ij}v_{x^ix^j} + b^i v_{x^i} + cv$.
By the uniqueness result in Lemma \ref{lemma:existence uniqueness reg of local solution}, we conclude that $u=\bar u$ in $\cH_p^{\gamma}(\tau_{m,n})$ for each $m\in\bN$. The monotone convergence theorem yields $u=\bar u$ in $\cH_p^{\gamma}(\tau_n)$, which implies $u = \bar u$ in $\cH_{p,loc}^{\gamma}$.

\vspace{2mm}

{\it Step 2 (Existence.)}.  
Let $T<\infty$. For $m\in\bN$, define $\tau_m^m$ and $u$ as in Remark \ref{def_u}. Observe that
\begin{equation*}
\begin{aligned}
&P\left(  \tau_m^m < T \right) \leq P\left(   \sup_{t\leq T,x\in\bR^d}|u(t,x)| \geq m \right).
\end{aligned}
\end{equation*}
Indeed, if $\tau_m^m < T$, then $\sup_{t\leq\tau_m^m,x\in\bR^d}|u(t,x)| = \sup_{t\leq\tau_m^m,x\in\bR^d}|u_m(t,x)| = m$ almost surely. Then, by Lemma \ref{lemma:key_lemma}, we have
\begin{equation*}
\begin{aligned}
\limsup_{m\to\infty} P\left(\tau_m^m < T\right) 
&\leq \limsup_{m\to\infty} P\left(\sup_{t\leq T,x\in\bR^d}|u(t,x)|\geq m\right) 
&= 0
\end{aligned}
\end{equation*}
Since $T<\infty$ is arbitrary, $\tau_m^m\to\infty$ in probability. In addition, we conclude that $\tau_m^m \uparrow \infty$ almost surely, because $\tau_m^m$ is increasing in $m$.

Last, set $\tau_m := \tau_m^m\wedge m$. Note that (see Remark \ref{def_u})
$$ u(t,x) = u_m(t,x)\quad\text{for}\quad t\in[0,\tau_m].
$$
Observe that $\sup_{x\in\bR^d}|u_m(t,x)|\leq m$ for $t\in[0,\tau_m]$; thus, $u_m$ satisfies \eqref{eq:target equation} almost everywhere $t\in[0,\tau_m]$ almost surely. Because $u = u_m$ for $t\in[0,\tau_m]$ and $u_m\in\cH_p^\gamma(\tau_m)$, it follows that $u\in\cH_p^{\gamma}(\tau_m)$ and $u$ satisfies \eqref{eq:target equation} for all $t\leq\tau_m$ almost surely. We have $u \in \cH_{p,loc}^\gamma$ because $\tau_m\uparrow\infty$ as $m\to\infty$ almost surely. 

To obtain H\"older regularity, we employ Corollary \ref{corollary:continuous embedding of solution}. The theorem is proved.

\end{proof}

\begin{proof}[\bf{Proof of Theorem \ref{uniqueness_in_p}}]
The proof of Theorem \ref{uniqueness_in_p} is motivated by \cite[Corollarly 5.11]{kry1999analytic}. Since $q>p$, by Theorem \ref{main theorem}, there exists a unique solution $\bar{u}\in \cH_{q,loc}^{\gamma}$ satisfying equation \eqref{eq:target equation}. By Definition \ref{def:solution space}, there exists $\tau_n$ such that $\tau_n\to\infty$ almost surely as $n\to\infty$, $u\in\cH_p^{\gamma}(\tau_n)$ and $\bar{u} \in\cH_q^{\gamma}(\tau_n)$. Fix $n\in\bN$. Because $\frac{2+\alpha d}{\alpha \gamma} < p < q$, we can define $\tau_{m,n}$ $(m\in\bN)$ as in \eqref{stopping_time_cutting}. 
Notice that for any $p_0 > p$, we have
\begin{equation*}
u\in \bL_{p_0}(\tau_{m,n}) 
\end{equation*}
since
\begin{equation*}
\bE\int_0^{\tau_{m,n}} \int_{\bR} |u(t,x)|^{p_0} dxdt \leq m^{p_0-p}\bE\int_0^{\tau_{m,n}} \int_{\bR} |u(t,x)|^{p} dxdt < \infty.
\end{equation*} 
Observe that $\bar{b}^i u u_{x^i} \in \bH_q^{\gamma-2}(\tau_{m,n}) \subset \bH_q^{-2}(\tau_{m,n})$. Indeed, similar to \eqref{estimate for local sol 1},
\begin{equation*}
\begin{aligned}
\bE \int_0^{\tau_{m,n}} \left\| \frac{1}{2}\bar{b}^i(s,\cdot)  \left( (u(s,\cdot))^2 \right)_{x^i} \right\|_{H_q^{\gamma-2}}^q ds  
&\leq N \bE \int_0^{\tau_{m,n}} \int_{\bR}|u(s,x)|^{2q} dxds 
< \infty.
\end{aligned}
\end{equation*}
Additionally, we have
\begin{equation*}
\begin{aligned}
&au_{xx} \in \bH_{q}^{-2}(\tau_{m,n}), \quad b u_{x} \in \bH_{q}^{-1}(\tau_{m,n}), \quad\text{and}\quad c u \in \bL_{q}(\tau_{m,n}).
\end{aligned}
\end{equation*}
Therefore, because $\bL_{q}(\tau_{m,n})\subset \bH_{q}^{-1}(\tau_{m,n}) \subset \bH_{q}^{-2}(\tau_{m,n})$,
\begin{equation} \label{deterministic_part_of_u_infinite_noise_1}
a^{ij}u_{x^ix^j} + b^i u_{x^i} + c u + \bar b^i u u_{x^i} \in \bH_{q}^{-2}(\tau_{m,n}).
\end{equation}
Similar to \eqref{bound of stochastic part}, we have
\begin{equation} 
\label{proof_of_claim_stochastic_part}
\| \sigma(u) \bm{\eta} \|^q_{\bH^{\gamma-2+c_0}_q(\tau_{m,n},l_2)}  \leq N\int_0^{\tau_{m,n}}\left\| h(t,\cdot) \right\|_{L_q}^q dt < \infty.
\end{equation}
Thus, we have
\begin{equation} \label{stochastic_part_of_u_infinite_noise_1}
\sigma( u )\bm{\eta} \in \bH_q^{\gamma-2+c_0}(\tau_{m,n},l_2) \subset \bH_{q}^{-2+c_0}(\tau_{m,n},l_2).
\end{equation}
Due to \eqref{deterministic_part_of_u_infinite_noise_1}, \eqref{stochastic_part_of_u_infinite_noise_1}, and Lemma \ref{lemma:quasi-linear results}, $u$ is in $\cL_q(\tau_{m,n})$ and $u$ satisfies \eqref{eq:target equation} for almost everywhere $t\leq \tau_{m,n}$ almost surely. On the other hand, since $\bar{b}^i uu_{x^i} \in \bH_q^{\gamma-2}(\tau_{m,n})$ and $\sigma( u )\bm{\eta} \in \bH^{\gamma-2+c_0}_q(\tau_{m,n},l_2) $, Lemma \ref{lemma:quasi-linear results} implies that there exists $v\in \cH_q^{\gamma}(\tau_{m,n})$ satisfying
\begin{equation}
\label{equation_in_proof_of_consistency}
\partial_t^\alpha v = Lv +  \bar{b}^i uu_{x^i} + \partial_t^\alpha \int_0^t \sigma(u) \eta_k dw^k_t, \quad 0<t\leq\tau_{m,n}\,; \quad v(0,\cdot)=u_0,
\end{equation}
where $Lv = a^{ij}v_{x^ix^j} + b^i v_{x^i} + cv $.
In \eqref{equation_in_proof_of_consistency}, note that $\bar{b}^i uu_{x^i}$ and $\sigma^k(u)$ are used instead of $\bar{b}^i vv_{x^i}$ and $\sigma^k(v)$. Moreover, because $u\in\cL_q(\tau_{m,n})$ satisfies equation \eqref{equation_in_proof_of_consistency}, $\bar v := u-v\in \cL_q(\tau_{m,n})$ satisfies 
\begin{equation*}
\partial_t^\alpha \bar v = a^{ij}\bar v_{x^ix^j} + b^i \bar v_{x^i} + c\bar v, \quad 0<t\leq\tau_{m,n}\,; \quad \bar v(0,\cdot)=0. 
\end{equation*}
By the deterministic version of Lemma \ref{lemma:quasi-linear results}, we have $\bar v = 0$ in $\cL_q(\tau_{m,n})$; thus, $u = v$ in $\cL_q(\tau_{m,n})$. Therefore, $u$ is in $\cH_q^{\gamma}(\tau_{m,n})$. As $\bar{u}\in \cH_q^{\gamma}(\tau_{m,n})$ and $\bar{u}$ satisfies equation \eqref{eq:target equation}, by Lemma \ref{lemma:existence uniqueness reg of local solution}, we have $u = \bar{u}$ in $\cH_q^{\gamma}(\tau_{m,n})$.  The theorem is proved.
\end{proof}

\vspace{2mm}


\section{ Proof of Theorem \texorpdfstring{\ref{thm:embedding theorem for sol space} }{Lg} }
\label{section: proof of continuity of the solution in t}

This section provides a proof of the embedding theorem for solution spaces $\cH_p^\gamma(\tau)$. Consider the following fractional diffusion equation
\begin{equation} 
\label{eq:example of fractional diffusion-wave equation}
\partial_t^\alpha u =  \Delta u \quad t>0\,
;\quad u(0,\cdot) = u_0(\cdot),
\end{equation}
where $\alpha\in(0,1)$ and $u_0(\cdot)\in C_c^\infty(\bR^d)$.
It turns out that a fundamental solution $p(t,x)$ exists such that 
\begin{equation*}
p(t,\cdot)\in L_1(\bR^d)\quad \mbox{and}\quad \cF(p(t,\cdot))(\xi) = E_\alpha(-t^\alpha|\xi|^2)
\end{equation*}
(e.g., \cite[Theorem 2.1]{kim2015asymptotic}), and the solution of \eqref{eq:example of fractional diffusion-wave equation} is given by
\begin{equation*}
\begin{aligned}
u(t,x) &= (u_0\ast p(t,\cdot))(x) = \int_{\bR^d}u_0(y)p(t,x-y)dy.
\end{aligned}
\end{equation*}
For convenience, define
\begin{equation*}
q_{\alpha,\beta}(t,x):=
\begin{cases}
I^{\alpha - \beta}p(t,x)\quad \quad &\text{if}\quad \quad\alpha\geq\beta \\
D^{\beta - \alpha}p(t,x)\quad \quad &\text{if}\quad \quad\alpha < \beta.
\end{cases}
\end{equation*}
We gather some facts related to $p(t,x)$ and $q_{\alpha,\beta}(t,x)$ (for more information, see \cite{kim2017lq,kim2019sobolev,kim2015asymptotic}).

\begin{lemma}
\label{lemma:properties of kernel}
Let $d\in\bN$, $\alpha\in(0,1)$, $\beta < \alpha + 1/2$, $\gamma\in[0,2)$, and $\sigma\in\bR$. 
\begin{enumerate}[(i)]

\item 
\label{item: prop of fundamental solution}
For all $t\neq 0$ and $x\neq 0$,
\begin{equation*}
\partial_t^\alpha p(t,x) = \Delta p(t,x)\quad \text{and} \quad \partial_t p(t,x) = \Delta q_{\alpha,1}(t,x).
\end{equation*}
Additionally, for each $x\neq 0$, $\frac{\partial}{\partial t}p(t,x)\to0$ as $t \downarrow 0$. Moreover, $\frac{\partial}{\partial t}p(t,x)$ is integrable in $\bR^d$ uniformly on $t\in [\delta,T]$ for any $\delta>0$.

\item 
\label{item:upper bound properties of kernel_1}
There exist constants $c = c(\alpha,d)$ and $N = N(\alpha,d)$ such that if $|x|^2 \geq t^\alpha$,
$$ |p(t,x)| \leq N|x|^{-d}\exp\left\{ -c|x|^{\frac{2}{2-\alpha}}t^{-\frac{\alpha}{2-\alpha}} \right\}.
$$

\item 
\label{item:upper bound properties of kernel_2}
Let $n\in \bN$. Then, there exists $N = N(\alpha,\gamma,n)$ such that
\begin{equation*}
\left| D_t^\sigma D_x^n (-\Delta)^{\gamma/2}q_{\alpha,\beta}(1,x)\right| \leq N\left( |x|^{-d+2-\gamma-n}\wedge |x|^{-d-\gamma-n} \right).
\end{equation*}

\item 
\label{item:scaling properties of kernel}
The scaling properties hold. In other words,
\begin{equation*}
\begin{gathered}
q_{\alpha,\beta}(t,x) = t^{-\frac{\alpha d}{2}+\alpha-\beta}q_{\alpha,\beta}(1,xt^{-\frac{\alpha}{2}}),\\
 (-\Delta)^{\gamma/2}q_{\alpha,\beta}(t,x) = t^{-\frac{\alpha(d+\gamma)}{2}+\alpha-\beta} (-\Delta)^{\gamma/2}q_{\alpha,\beta}(1,xt^{-\frac{\alpha}{2}}).
\end{gathered}
\end{equation*}

\end{enumerate}
\end{lemma}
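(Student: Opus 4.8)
The plan is to dispatch the four parts in increasing order of analytic difficulty, extracting everything that is algebraic from the Fourier/Mittag--Leffler representation $\cF(p(t,\cdot))(\xi)=E_\alpha(-t^\alpha|\xi|^2)$ together with the fractional calculus identities of Remark \ref{rmk:prop of fractional calculus}, and deferring only the sharp pointwise bounds to the asymptotic analysis of \cite{kim2015asymptotic,kim2019sobolev,kim2017lq}.

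First I would establish the scaling of part \eqref{item:scaling properties of kernel}, since it is purely algebraic and is precisely what reduces the estimates in \eqref{item:upper bound properties of kernel_1} and \eqref{item:upper bound properties of kernel_2} to the single time slice $t=1$. Writing $E_\alpha(-t^\alpha|\xi|^2)=E_\alpha(-|t^{\alpha/2}\xi|^2)$ on the Fourier side and inverting gives the self-similar form $p(t,x)=t^{-\alpha d/2}p(1,t^{-\alpha/2}x)$. Feeding this into the definition $q_{\alpha,\beta}=I^{\alpha-\beta}p$ (or $D^{\beta-\alpha}p$ when $\alpha<\beta$) and changing variables $s=t\tau$ in the time integral produces exactly the factor $t^{-\alpha d/2+\alpha-\beta}$ together with the rescaled argument $xt^{-\alpha/2}$; the identity for $(-\Delta)^{\gamma/2}q_{\alpha,\beta}$ follows because the homogeneous Riesz operator contributes an additional factor $t^{-\alpha\gamma/2}$, giving the exponent $-\alpha(d+\gamma)/2+\alpha-\beta$.

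Next, for the differential relations in part \eqref{item: prop of fundamental solution}, I would use that $y(t)=E_\alpha(-\lambda t^\alpha)$ is the unique solution of $\partial_t^\alpha y=-\lambda y$ with $y(0)=1$; taking $\lambda=|\xi|^2$ yields $\partial_t^\alpha\hat p=-|\xi|^2\hat p$, which inverts to $\partial_t^\alpha p=\Delta p$ once the bounds of \eqref{item:upper bound properties of kernel_1}--\eqref{item:upper bound properties of kernel_2} justify differentiating under $\cF^{-1}$ for $t,x\neq0$ (and noting $p(0,x)=0$ for $x\neq0$, so that the Caputo and Riemann--Liouville derivatives coincide). The identity $\partial_t p=\Delta q_{\alpha,1}$ then follows by writing $\partial_t=D_t^{1-\alpha}D_t^\alpha$, which is legitimate by the composition rule $D_t^{1-\alpha}D_t^\alpha\varphi=D_t^1\varphi$ of Remark \ref{rmk:prop of fractional calculus}, commuting the time operator $D_t^{1-\alpha}$ past the spatial $\Delta$, and recognizing $q_{\alpha,1}=D_t^{1-\alpha}p$. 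The vanishing $\tfrac{\partial}{\partial t}p(t,x)\to0$ as $t\downarrow0$ for fixed $x\neq0$, and the integrability of $\tfrac{\partial}{\partial t}p$ uniformly on $[\delta,T]$, are then read off from the bound in \eqref{item:upper bound properties of kernel_2} (with $\sigma=1$, $n=0$, $\gamma=0$) combined with the scaling of \eqref{item:scaling properties of kernel}.

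The genuine analytic content, and the step I expect to be the main obstacle, is the pointwise kernel control in parts \eqref{item:upper bound properties of kernel_1} and \eqref{item:upper bound properties of kernel_2}. These require the precise asymptotics of the Mittag--Leffler function and of the inverse Fourier transform of $E_\alpha(-|\xi|^2)$: the sub-exponential tail $\exp\{-c|x|^{2/(2-\alpha)}t^{-\alpha/(2-\alpha)}\}$ in \eqref{item:upper bound properties of kernel_1} arises from a steepest-descent/contour analysis in the regime $|x|^2\geq t^\alpha$, while the two-sided power behavior $|x|^{-d+2-\gamma-n}\wedge|x|^{-d-\gamma-n}$ in \eqref{item:upper bound properties of kernel_2} reflects the near-origin singularity against the decay at infinity of $(-\Delta)^{\gamma/2}q_{\alpha,\beta}(1,\cdot)$ and its derivatives. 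I would reduce the general-time claim to $t=1$ via \eqref{item:scaling properties of kernel} and then invoke the estimates established in \cite{kim2015asymptotic,kim2019sobolev,kim2017lq}; reproducing those bounds from scratch would demand the full contour-integral machinery for the Mittag--Leffler kernel, and this is the only part of the argument that is not elementary.
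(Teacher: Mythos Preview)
Your proposal is correct and aligns with the paper's approach: the paper's own proof consists entirely of citations to Theorems 2.1, 2.3 and equation (5.2) of \cite{kim2019sobolev}, so both your argument and the paper ultimately defer to the same references for the analytic content of parts \eqref{item:upper bound properties of kernel_1} and \eqref{item:upper bound properties of kernel_2}. Your sketch of the scaling identity \eqref{item:scaling properties of kernel} via the Fourier representation and the change of variables $s=t\tau$, and of the differential relations in \eqref{item: prop of fundamental solution} via the Mittag--Leffler ODE and the composition rule $D_t^{1-\alpha}D_t^\alpha=D_t^1$, is sound and in fact more explicit than what the paper records.
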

\begin{proof}
To see \eqref{item: prop of fundamental solution}, \eqref{item:upper bound properties of kernel_1}, and \eqref{item:upper bound properties of kernel_2} follow from Theorems 2.1 and 2.3 of \cite{kim2019sobolev}. For \eqref{item:scaling properties of kernel}, see (5.2) in \cite{kim2019sobolev}.
\end{proof}

\begin{remark}
\label{intro operators}
To prove Theorem \ref{thm:embedding theorem for sol space}, we define the operators. Let $\phi\in C_c^\infty(\bR^d)$ and $f\in C_c^\infty((0,\infty)\times\bR^d)$. Take a function $g = (g^1,g^2,\dots)$ satisfying the form
\begin{equation}
\label{form of g}
g^k(t,x) = 
\begin{cases}
\sum_{i = 1}^{n} 1_{\opar\tau_{i-1},\tau_{i}\cbrk}(t)g^{ik}(x)  &\quad\mbox{for}\quad k = 1,2,\dots,n,\\
\quad\quad\quad\quad 0 &\quad\mbox{for}\quad k = n+1,\dots
\end{cases}
\end{equation}
for some $n\in\bN$, where $\tau_i$ is the bounded stopping time and $g^{ik}\in C_c^\infty(\bR^d)$. Further, we set
\begin{gather}
T^1_t \phi(x) := \int_{\bR^d}p(t,x-y)\phi(y)dy, \label{def of t1}\\
T^2_t f(t,x) := \int_0^t \int_{\bR^d} q_{\alpha,1}(t-r,x-y) f(r,y) dy ds, \label{def of t2}\\
T^3_t g(t,x) := \sum_{k}\int_0^t \int_{\bR^d} q_{\alpha,\beta}(t-r,x-y) g^k(r,y) dy dw_s^k. \label{def of t3}
\end{gather}
It is well-known that $T_t^1 \phi$, $T_t^2 f$, and $T_t^3 g$ are solutions to 
\begin{equation}
\label{eq only initial data}
\partial_t^\alpha u^1 = \Delta u^1;\quad u^1(0,\cdot) = \phi,
\end{equation}
\begin{gather*}
\partial_t^\alpha u^2 = \Delta u^2 + f;\quad u^2(0,\cdot) = 0, \label{eq only external force}\\
\partial_t^\alpha u^3 = \Delta u^3 + \partial^\beta_t\int_0^t g^kdw_s^k;\quad u^3(0,\cdot) = 0, \label{eq only random force}\\
\end{gather*}
respectively (for more information, see \cite{kim2017lq,kim2019sobolev,kim2015asymptotic}).

\end{remark}

First, we provide a smoothing effect of $T_t^1$, which implies the continuity of $T_t^1\phi$ in $t$.

\begin{lemma} 
\label{sup norm control lemma}
Let $\tau\leq T$ be a bounded stopping time and $\alpha\in(0,1)$. If $p\in (1,\infty)$, $\theta\in[0,\alpha)$, $\phi\in C_c^\infty(\bR^d)$, and $t\in(0,T)$, we have
\begin{equation} 
\label{ineq:smoothing effect 1}
e^{-t}\left\| T^1_t \phi \right\|_{H^{\frac{2\theta}{\alpha}}_p}\leq Nt^{-\theta}\| \phi \|_{L_p}
\end{equation}
where $N = N(\alpha,\theta,d,p,T)$.

\end{lemma}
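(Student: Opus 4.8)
The plan is to exploit the convolution structure $T_t^1\phi = p(t,\cdot)\ast\phi$ and reduce the claim to an $L_1$ bound on a fractional power of the fundamental solution. Writing $a:=\theta/\alpha\in[0,1)$, the left-hand norm is $\|T_t^1\phi\|_{H_p^{2a}}=\|(1-\Delta)^{a}(p(t,\cdot)\ast\phi)\|_{L_p}$. When $a=0$ (i.e.\ $\theta=0$) there is nothing fractional: Young's convolution inequality together with $\|p(t,\cdot)\|_{L_1}=\|p(1,\cdot)\|_{L_1}\le N$ (which follows from the $\gamma=0$, $\beta=\alpha$ scaling in Lemma \ref{lemma:properties of kernel} \eqref{item:scaling properties of kernel}) gives $\|T_t^1\phi\|_{L_p}\le N\|\phi\|_{L_p}$ at once. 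So from now on I assume $a\in(0,1)$.

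First I would trade the \emph{inhomogeneous} operator for the \emph{homogeneous} one, since only the latter scales. By the comparison estimate of Lemma \ref{lemma:properties of bessel potential spaces} \eqref{multiplier theorem} (applied with exponent $a\in(0,1)$),
\[
\|(1-\Delta)^{a}T_t^1\phi\|_{L_p}\le N\big(\|T_t^1\phi\|_{L_p}+\|(-\Delta)^{a}T_t^1\phi\|_{L_p}\big).
\]
Both fractional operators commute with convolution, so $(-\Delta)^{a}T_t^1\phi=\big[(-\Delta)^{a}p(t,\cdot)\big]\ast\phi$, and Young's inequality bounds each term by an $L_1$ norm of the kernel times $\|\phi\|_{L_p}$: the first by $\|p(t,\cdot)\|_{L_1}\le N$, the second by $\|(-\Delta)^{a}p(t,\cdot)\|_{L_1}$. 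Thus the whole problem collapses to estimating $\|(-\Delta)^{a}p(t,\cdot)\|_{L_1}$.

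For this I would use $p=q_{\alpha,\alpha}$ together with the scaling identity in Lemma \ref{lemma:properties of kernel} \eqref{item:scaling properties of kernel} with $\gamma=2a$, $\beta=\alpha$, namely $(-\Delta)^{a}p(t,x)=t^{-\alpha(d+2a)/2}(-\Delta)^{a}p(1,xt^{-\alpha/2})$; a change of variables then yields
\[
\|(-\Delta)^{a}p(t,\cdot)\|_{L_1}=t^{-\alpha a}\,\|(-\Delta)^{a}p(1,\cdot)\|_{L_1}=t^{-\theta}\,\|(-\Delta)^{a}p(1,\cdot)\|_{L_1}.
\]
The remaining constant is finite by the pointwise kernel bound of Lemma \ref{lemma:properties of kernel} \eqref{item:upper bound properties of kernel_2} (with $n=0$, $\sigma=0$, $\gamma=2a$), which gives $|(-\Delta)^{a}p(1,x)|\le N(|x|^{-d+2-2a}\wedge|x|^{-d-2a})$; this is integrable near the origin because $2-2a>0$ and near infinity because $2a>0$, i.e.\ exactly when $0<a<1$. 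Combining, $\|(1-\Delta)^{a}T_t^1\phi\|_{L_p}\le N(1+t^{-\theta})\|\phi\|_{L_p}\le N(1+T^{\theta})t^{-\theta}\|\phi\|_{L_p}$ for $t\in(0,T)$, and multiplying by $e^{-t}\le1$ finishes the estimate with $N=N(\alpha,\theta,d,p,T)$.

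The only genuine obstacle is the mismatch between $(1-\Delta)^{a}$ and $(-\Delta)^{a}$: the Bessel operator does not interact cleanly with the parabolic scaling $x\mapsto xt^{-\alpha/2}$, so one cannot scale directly, and this is precisely what Lemma \ref{lemma:properties of bessel potential spaces} \eqref{multiplier theorem} is used to repair. The second delicate point is the borderline integrability of $(-\Delta)^{a}p(1,\cdot)$ at the origin, which requires the singularity exponent $-d+2-2a$ to stay above $-d$; this is where the hypothesis $\theta<\alpha$ (equivalently $a<1$) enters in an essential way.
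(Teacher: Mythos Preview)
Your proof is correct and follows essentially the same approach as the paper's: both reduce the Bessel-potential norm to the homogeneous one via Lemma~\ref{lemma:properties of bessel potential spaces}~\eqref{multiplier theorem}, then use Young/Minkowski plus the scaling and pointwise bounds on $(-\Delta)^{\theta/\alpha}p(t,\cdot)$ from Lemma~\ref{lemma:properties of kernel} to get the $t^{-\theta}$ factor. The only cosmetic difference is that the paper converts the constant term into $t^{-\theta}$ via $e^{-t}\le Nt^{-\theta}$, whereas you use $1\le T^{\theta}t^{-\theta}$ on $(0,T)$; both are perfectly fine.
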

\begin{proof}

In the case of $\theta=0$, by Mink\"owski's inequality, we have
\begin{equation} 
\label{ineq:minkowski ineq of kernel}
\begin{aligned}
\| T^1_t \phi \|_{L_p} \leq \| p(t,\cdot)\ast \phi \|_{L_p} \leq \|p(t,\cdot)\|_{L_1}\| \phi \|_{L_p}\leq \| \phi \|_{L_p}.
\end{aligned}
\end{equation}
Thus, we have \eqref{ineq:smoothing effect 1}. For $\theta\in(0,\alpha)$, observe that 
\begin{equation*}
\| e^{-t}T^1_t\phi \|_{H_p^{2\theta/\alpha}}  = \|(1-\Delta)^{\theta/\alpha}(e^{-t}T^1_t\phi)\|_{L_p}  \leq \| e^{-t}T^1_t\phi \|_{L_p}+\| (-\Delta)^{\theta/\alpha}(e^{-t}T^1_t\phi) \|_{L_p},
\end{equation*}
where the last inequality follows from Lemma \ref{lemma:properties of bessel potential spaces} \eqref{multiplier theorem}.
As $e^{-t}\leq (N t^{-\theta}\wedge 1)$, we have
\begin{equation}
\label{proof of smoothing ef - 1}
\left\| e^{-t}T^1_t \phi \right\|_{H_p^{2\theta/\alpha}}   \leq N\left(t^{-\theta}\left\| T^1_t\phi \right\|_{L_p} + \left\| (-\Delta)^{\theta/\alpha}T^1_t\phi \right\|_{L_p}\right).
\end{equation}
By inequality \eqref{ineq:minkowski ineq of kernel}, we have
\begin{equation}
\label{proof of smoothing ef - 2}
t^{-\theta}\| T^1_t\phi \|_{L_p} \leq t^{-\theta}\| \phi \|_{L_p}.
\end{equation}
On the other hand, Minkowski's inequality yields
\begin{equation}
\label{proof of smoothing ef - 3}
\| (-\Delta)^{\theta/\alpha}T^1_th \|_{L_p} = \| (-\Delta)^{\theta/\alpha}(p(t,\cdot)\ast \phi) \|_{L_p} \leq  \| ((-\Delta)^{\theta/\alpha}p)(t,\cdot) \|_{L_1}\| \phi \|_{L_p}.
\end{equation}
Additionally Lemma \ref{lemma:properties of kernel} \eqref{item:scaling properties of kernel}, \eqref{item:upper bound properties of kernel_1}, and \eqref{item:upper bound properties of kernel_2} imply 
\begin{equation}
\label{proof of smoothing ef - 4}
\begin{aligned}
\| ((-\Delta)^{\theta/\alpha}p)(t,\cdot) \|_{L_1} 
&= \int_{\bR^d} |((-\Delta)^{\theta/\alpha}p)(t,x)|dx \\
&\leq t^{-\theta}\int_{\bR^d}|((-\Delta)^{\theta/\alpha}p)(1,x)|dx \\
&\leq N(\alpha,\theta,d,p)t^{-\theta}.
\end{aligned}
\end{equation}
Then, by applying \eqref{proof of smoothing ef - 4} to \eqref{proof of smoothing ef - 3}, we have
\begin{equation}
\label{proof of smoothing ef - 5}
\| (-\Delta)^{\theta/\alpha}T^1_t\phi \|_{L_p} \leq N t^{-\theta} \| \phi \|_{L_p}.
\end{equation}
Thus, by plugging in \eqref{proof of smoothing ef - 2} and \eqref{proof of smoothing ef - 5} into \eqref{proof of smoothing ef - 1}, we have \eqref{ineq:smoothing effect 1}. 
The lemma is proved.
\end{proof}

To proceed further, we introduce one of the embedding theorems for Slobodetskii's spaces.

\begin{lemma} 
\label{slobodetskii's embedding}
If $\mu p >1$ and $p\geq 1$, for any continuous $L_p$-valued function $\phi$ and $\gamma\leq \rho$, we have the following:
\begin{equation}
\label{ineq:slobodetskii's embedding 1}
\begin{aligned}
\| \phi(\rho) - \phi(\gamma) \|_{L_p}^p \leq N(\rho-\gamma)^{\mu p -1}\int_\gamma^\rho\int_\gamma^\rho 1_{t > s}\frac{\| \phi(t) - \phi(s) \|_{L_p}^p}{|t-s|^{1+\mu p}} dsdt\quad\left( \frac{0}{0} := 0 \right),
\end{aligned}
\end{equation}
where $N = N(\mu,p)$. In particular,
\begin{equation}
\label{ineq:slobodetskii's embedding 2}
\begin{aligned}
\bE \sup_{0\leq s < t \leq T}\frac{\| \phi(t) - \phi(s) \|_{L_p}^p}{|t-s|^{\mu p - 1}} 
&\leq N \int_0^{T}\int_0^{T} 1_{t>s} \frac{\bE\left\| \phi(t) - \phi(s) \right\|_{L_p}^p}{|t-s|^{1 + \mu p}}dsdt.
\end{aligned}
\end{equation}

\end{lemma}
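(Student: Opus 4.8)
The plan is to establish the pointwise bound \eqref{ineq:slobodetskii's embedding 1} first and then deduce \eqref{ineq:slobodetskii's embedding 2} by applying it on subintervals. For \eqref{ineq:slobodetskii's embedding 1} I would begin by reducing to the normalized case $\gamma=0$, $\rho=1$. Setting $\psi(t):=\phi(\gamma+(\rho-\gamma)t)$ and changing variables $t\mapsto\gamma+(\rho-\gamma)t$, $s\mapsto\gamma+(\rho-\gamma)s$ shows that the double integral of $\psi$ over $[0,1]^2$ equals $(\rho-\gamma)^{\mu p-1}$ times the double integral of $\phi$ over $[\gamma,\rho]^2$, while $\psi(1)-\psi(0)=\phi(\rho)-\phi(\gamma)$. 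Hence it suffices to prove $\|\phi(1)-\phi(0)\|_{L_p}^p\le N\,I$, where $I:=\int_0^1\int_0^1 1_{t>s}|t-s|^{-(1+\mu p)}\|\phi(t)-\phi(s)\|_{L_p}^p\,ds\,dt$; if $I=\infty$ the claim is trivial, so I assume $I<\infty$.

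The heart of the argument is a dyadic chaining. For an interval $J\subset[0,1]$ write $\phi_J:=|J|^{-1}\int_J\phi(r)\,dr$ for the $L_p$-valued average, and introduce the two chains $J_n^0:=[0,2^{-n}]$ and $J_n^1:=[1-2^{-n},1]$ (with $J_0^0=J_0^1=[0,1]$). Continuity of $\phi$ gives $\phi_{J_n^0}\to\phi(0)$ and $\phi_{J_n^1}\to\phi(1)$ in $L_p$, so telescoping yields
$$\phi(1)-\phi(0)=\sum_{k\ge1}\bigl(\phi_{J_k^1}-\phi_{J_{k-1}^1}\bigr)-\sum_{k\ge1}\bigl(\phi_{J_k^0}-\phi_{J_{k-1}^0}\bigr).$$
Since $J_k^1\subset J_{k-1}^1$, I would express $\phi_{J_k^1}-\phi_{J_{k-1}^1}$ as the double average of $\phi(t)-\phi(s)$ over $J_k^1\times J_{k-1}^1$, pull the $L_p$ norm inside by Minkowski's inequality, and apply H\"older's inequality in $(t,s)$ to factor out the weight $|t-s|^{-(1+\mu p)}$. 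Writing $A_k:=\int_{J_k^1}\int_{J_{k-1}^1}|t-s|^{-(1+\mu p)}\|\phi(t)-\phi(s)\|_{L_p}^p\,ds\,dt$ and using $|t-s|\le2^{1-k}$ together with $|J_k^1|\,|J_{k-1}^1|=2^{1-2k}$, the powers of $2$ collapse to the exponent $(1-\mu p)/p$, giving
$$\bigl\|\phi_{J_k^1}-\phi_{J_{k-1}^1}\bigr\|_{L_p}\le N\,2^{-k\delta}A_k^{1/p},\qquad \delta:=\frac{\mu p-1}{p}>0,$$
and the analogous estimate for the $J^0$ chain. This is precisely where the hypothesis $\mu p>1$ enters: it is exactly what forces $\delta>0$ and renders the subsequent geometric series summable, and carefully verifying that the exponent bookkeeping produces $2^{-k\delta}$ is the main (if routine) obstacle.

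To conclude \eqref{ineq:slobodetskii's embedding 1}, note that each region $J_k^1\times J_{k-1}^1$ lies in $[0,1]^2$ and the integrand is nonnegative and symmetric, so $A_k\le 2I$ for every $k$. Summing the bounds above and using $\sum_{k\ge1}2^{-k\delta}<\infty$,
$$\|\phi(1)-\phi(0)\|_{L_p}\le N\sum_{k\ge1}2^{-k\delta}A_k^{1/p}\le N\,(2I)^{1/p}\sum_{k\ge1}2^{-k\delta}=N(\mu,p)\,I^{1/p},$$
which is \eqref{ineq:slobodetskii's embedding 1} once the normalization is undone. Note in particular that no control of overlaps between the chaining regions is needed, since each $A_k$ is dominated by the full integral $I$.

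Finally, for \eqref{ineq:slobodetskii's embedding 2} I would apply \eqref{ineq:slobodetskii's embedding 1} on each subinterval $[s,t]\subset[0,T]$, obtaining
$$\frac{\|\phi(t)-\phi(s)\|_{L_p}^p}{|t-s|^{\mu p-1}}\le N\int_s^t\int_s^t 1_{t'>s'}\frac{\|\phi(t')-\phi(s')\|_{L_p}^p}{|t'-s'|^{1+\mu p}}\,ds'\,dt'\le N\int_0^T\int_0^T 1_{t'>s'}\frac{\|\phi(t')-\phi(s')\|_{L_p}^p}{|t'-s'|^{1+\mu p}}\,ds'\,dt'.$$
The right-hand side is independent of $(s,t)$, so taking the supremum over $0\le s<t\le T$, then the expectation, and exchanging $\bE$ with the nonnegative double integral by Tonelli's theorem yields \eqref{ineq:slobodetskii's embedding 2}.
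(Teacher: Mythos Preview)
The paper does not actually prove this lemma; it is stated as a known embedding result for Slobodetskii spaces and then used as a black box in the subsequent estimates. Your proof is correct: the scaling reduction, the dyadic chaining with telescoping averages over $J_k^0$ and $J_k^1$, the H\"older step producing the exponent $2^{-k\delta}$ with $\delta=(\mu p-1)/p>0$, and the final passage from \eqref{ineq:slobodetskii's embedding 1} to \eqref{ineq:slobodetskii's embedding 2} via monotonicity and Tonelli all go through as you describe. This is essentially the standard Garsia--Rodemich--Rumsey/Campanato-type argument for the Slobodetskii embedding, and it is exactly the kind of proof one would supply if asked to justify the citation.
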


With the help of Lemma \ref{slobodetskii's embedding}, we obtain the H\"older continuity of $T_t^1\phi$, $T_t^2f$, and $T_t^3g$ on $t\in[0,T]$.

\begin{lemma} 
\label{holder semi norm control lemma}
Let $\alpha\in(0,1)$.
\begin{enumerate}[(i)]

\item 
\label{item: holder conti of T1}
If $p \in (1,\infty)$, $\theta\in(0,\alpha)$, and $\mu\in(0,1)$ satisfy
\begin{equation}
\label{condition of p and mu}
p > \frac{1}{\alpha-\theta},\quad  \mu\in \left( \frac{1}{\alpha p}, \frac{\alpha-\theta}{\alpha} \right)
\end{equation}
and $\phi\in C_c^\infty(\bR^d)$, then for $t,s\in(0,T)$,
\begin{equation}
\label{eq: holder conti of T1-1}
\left\| T^1_{t} f - T^1_{s} f \right\|_{H^{\frac{2\theta}{\alpha}-2}_p}^p 
\leq N|t-s|^{\alpha\mu p - 1}\left\| \phi \right\|_{L_p}^p,
\end{equation}
where $N = N(\alpha,\theta,d,p,T)$. Additionally,
\begin{equation} 
\label{eq: holder conti of T1-2}
\sup_{0 \leq s < t \leq T}\frac{\left\| T_{t}^1 \phi - T_{s}^1 \phi \right\|_{H_p^{\frac{2\theta}{\alpha}-2}}^p}{|t-s|^{\alpha\mu p - 1}} \leq N\left\| \phi \right\|_{L_p}^p
\end{equation}
where $N = N(\alpha,\theta,d,p,T)$.

\item 
\label{item: holder conti of T2}
If $p\in(1,\infty)$, $\theta\in(0,\alpha)$, and $\mu \in (0,1)$ satisfy \eqref{condition of p and mu} and $f\in C_c^\infty((0,T)\times\bR^d)$, then for $t,s\in(0,T)$,
\begin{equation}
\label{eq: holder conti of T2-1}
\left\| T^2_{t} f - T^2_{s} f \right\|_{H^{2\theta/\alpha}_p}^p 
\leq N|t-s|^{\alpha\mu p - 1}\int_0^T\left\| f(r,\cdot) \right\|_{L_p}^pdr
\end{equation}
where $N = N(\alpha,\theta,d,p,T)$. Additionally, 
\begin{equation}
\label{eq: holder conti of T2-2}
\sup_{0 \leq s < t \leq T}\frac{\left\| T^2_{t} f - T^2_{s} f \right\|_{H^{2\theta/\alpha}_p}^p}{|t-s|^{\alpha\mu p - 1}} 
\leq N\int_0^T\left\| f(r,\cdot) \right\|_{L_p}^pdr,
\end{equation}
where $N = N(\alpha,\theta,d,p,T)$.

\item 
\label{item: holder conti of T3}
Let $\tau\leq T$ be a bounded stopping time. If $\beta < \alpha + \frac{1}{2}$, $p\in[2,\infty)$, $\theta\in(0,\alpha -\beta  + 1/2)$, and $\mu\in(0,1)$ satisfy
$$ p > \frac{1}{(\alpha-\beta-\theta)\wedge 1/2 + 1/2},\quad  \mu\in\left( \frac{1}{\alpha p}, \frac{(\alpha  - \beta - \theta)\wedge 1/2 + 1/2}{\alpha} \right),
$$
and $g = (g^1,g^2,\dots)$ is a function of type \eqref{form of g}, then, for $t,s\in(0,T)$, we have
\begin{equation}
\label{eq: holder conti of T3}
\bE \left\| T_{t\wedge \tau}^3 g - T_{s\wedge \tau}^3 g \right\|_{H^{2\theta/\alpha}_p(l_2)}^p 
 \leq N|t-s|^{\alpha \mu p - 1}\| g \|_{\bL_p(\tau,l_2)}^p,
\end{equation}
where $N = N(\alpha,\beta,\theta,d,p,T)$.
Additionally,
\begin{equation}
\label{eq: holder conti of T3-2}
\bE\sup_{0 \leq s < t \leq \tau}\frac{ \left\| T_{t}^3 g - T_{s}^3 g \right\|_{H^{2\theta/\alpha}_p(l_2)}^p }{|t-s|^{\alpha \mu p - 1}}
\leq N\| g \|_{\bL_p(\tau,l_2)}^p,
\end{equation}
where $N = N(\alpha,\beta,\theta,d,p,T)$.

\end{enumerate}
\end{lemma}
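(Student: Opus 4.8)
The plan is to prove, for each $i=1,2,3$, a two-point-in-time bound of the shape $\|T^i_t\,\cdot-T^i_s\,\cdot\|^p\le N|t-s|^{\alpha\mu p-1}\times(\text{norm of the data})$, namely \eqref{eq: holder conti of T1-1}, \eqref{eq: holder conti of T2-1} and \eqref{eq: holder conti of T3}, and then to upgrade each of these to the corresponding H\"older-seminorm bound \eqref{eq: holder conti of T1-2}, \eqref{eq: holder conti of T2-2}, \eqref{eq: holder conti of T3-2} by feeding it into the Slobodetskii embedding of Lemma \ref{slobodetskii's embedding}. The engine for all three two-point bounds is the scaling identity of Lemma \ref{lemma:properties of kernel} \eqref{item:scaling properties of kernel} combined with the pointwise kernel bounds \eqref{item:upper bound properties of kernel_2}: a change of variables gives $\|(-\Delta)^{\theta/\alpha}q_{\alpha,\beta}(\rho,\cdot)\|_{L_1}=C\rho^{\alpha-\beta-\theta}$ with $C<\infty$ (using $2\theta/\alpha<2$), and likewise $\|\partial_\rho(-\Delta)^{\theta/\alpha}q_{\alpha,\beta}(\rho,\cdot)\|_{L_1}=C'\rho^{\alpha-\beta-\theta-1}$ via the $\sigma=1$ case of \eqref{item:upper bound properties of kernel_2}. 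These power laws localize every time-integral below.

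For \eqref{item: holder conti of T1} I would exploit that $T^1_t\phi=p(t,\cdot)\ast\phi$ depends on $t$ only through the kernel, so by Lemma \ref{lemma:properties of kernel} \eqref{item: prop of fundamental solution} one may write, for $s<t$,
\[
T^1_t\phi-T^1_s\phi=\int_s^t\partial_r\big(p(r,\cdot)\ast\phi\big)\,dr=\int_s^t\Delta\big(q_{\alpha,1}(r,\cdot)\ast\phi\big)\,dr.
\]
Pushing $\Delta$ into the $H_p^{2\theta/\alpha-2}$ norm (Lemma \ref{lemma:properties of bessel potential spaces}) and estimating $\|q_{\alpha,1}(r,\cdot)\ast\phi\|_{H_p^{2\theta/\alpha}}$ through Young's inequality, the multiplier bound \eqref{multiplier theorem}, and the $L_1$ scaling above yields $\le N\int_s^t r^{\alpha-1-\theta}\,dr\,\|\phi\|_{L_p}\le N|t-s|^{\alpha-\theta}\|\phi\|_{L_p}$, the $r$-integral converging precisely because $\theta<\alpha$. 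Since $\phi$ carries no time dependence, there is no loss from an $L_p$-in-time H\"older step; raising to the power $p$ and comparing exponents (using $|t-s|\le T$ and $\alpha-\theta>\alpha\mu$) gives \eqref{eq: holder conti of T1-1}.

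Parts \eqref{item: holder conti of T2} and \eqref{item: holder conti of T3} are genuine time-convolutions, so for $s<t$ I would split
\[
T^\bullet_t-T^\bullet_s=\int_s^t q(t-r,\cdot)\ast\psi(r,\cdot)\,dr+\int_0^s\big[q(t-r,\cdot)-q(s-r,\cdot)\big]\ast\psi(r,\cdot)\,dr,
\]
where $q=q_{\alpha,1},\ \psi=f$ for $T^2$, and $q=q_{\alpha,\beta},\ \psi=g^k$ with the $dr$ read as stochastic differentials $dw^k_r$ summed in $k$ for $T^3$. On the near part one applies the $L_1$ scaling bound and then H\"older in $r$; the requirement $p>\tfrac1{\alpha-\theta}$ (resp. $p>\tfrac1{(\alpha-\beta-\theta)\wedge1/2+1/2}$) is exactly what places the singular factor $(t-r)^{\alpha-\beta-\theta}$ in $L_{p'}$ near $r=t$ and produces a positive net time-exponent $\alpha\mu-1/p$. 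On the far part one writes the kernel difference as $\int_{s-r}^{t-r}\partial_\rho q(\rho,\cdot)\,d\rho$ and invokes the $\rho^{\alpha-\beta-\theta-1}$ bound. The stochastic case \eqref{item: holder conti of T3} is treated the same way after first applying the Burkholder--Davis--Gundy inequality (with Minkowski's inequality, valid since $p\ge2$) to replace the stochastic integral by the $L_2(dr)$-in-time norm of the convolved kernel; here square-integrability of $(t-r)^{\alpha-\beta-\theta}$ at $r=t$ forces exactly $\theta<\alpha-\beta+1/2$, while the martingale contributes an extra $1/2$ of time regularity capped at $1/2$, which is the origin of the quantity $(\alpha-\beta-\theta)\wedge1/2+1/2$ in the hypotheses.

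Finally, each H\"older-seminorm estimate follows by applying Lemma \ref{slobodetskii's embedding} with Slobodetskii index $\alpha\mu$ (admissible because $\mu>\tfrac1{\alpha p}$ makes $\alpha\mu p>1$) to the $H_p^{2\theta/\alpha}$- (resp. $H_p^{2\theta/\alpha-2}$-) valued path: convergence of the Gagliardo double integral near the diagonal holds because the two-point exponent strictly exceeds $\alpha\mu p$, which is what the strict upper bounds on $\mu$ together with the open range of $\theta$ encode. I expect \eqref{item: holder conti of T3} to be the main obstacle, since one must simultaneously manage the BDG estimate, the $\ell_2$-summation over the driving noises, and the fractional-kernel bookkeeping for $q_{\alpha,\beta}$ (whose very definition branches at $\alpha=\beta$); it is there that the $1/2$ gained from stochastic integration, the $-\beta$ lost to $\partial^\beta_t$, and the cap $\wedge1/2$ must all be tracked carefully to land on the stated ranges.
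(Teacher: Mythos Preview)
Your plan is correct and rests on the same core ingredients as the paper (the kernel scaling of Lemma \ref{lemma:properties of kernel}, the near/far split for time-convolutions, BDG for the stochastic piece, and the Slobodetskii embedding), but you reverse the order in which Slobodetskii enters. The paper applies Lemma \ref{slobodetskii's embedding} \emph{first}, in all three parts, so that both the two-point bound and the sup bound reduce to showing that the Gagliardo double integral $\int_0^T\int_0^T 1_{t>s}|t-s|^{-1-\alpha\mu p}\|T_t-T_s\|^p\,ds\,dt$ is controlled by the data; the extra $s$-integration then lets a Minkowski step cleanly pull out $\|f\|_{\bL_p}$ or $\|g\|_{\bL_p(l_2)}$, leaving a pure kernel integral to estimate. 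Your route instead proves the pointwise-in-$(s,t)$ increment bound directly via H\"older in the time variable, and the hypothesis on $p$ in each part is exactly what makes that H\"older step converge: for (ii) one needs $(t-r)^{\alpha-1-\theta}\in L_{p'}$, which is $p>1/(\alpha-\theta)$, while for (iii), after BDG, one needs $(t-r)^{2(\alpha-\beta-\theta)}\in L_{p/(p-2)}$, which unwinds to $p>1/((\alpha-\beta-\theta)\wedge\tfrac12+\tfrac12)$ --- so your informal phrase ``in $L_{p'}$'' should be read with this $L_2$-from-BDG adjustment. Your treatment of (i) via $\partial_t p=\Delta q_{\alpha,1}$ is in fact more direct than the paper's, which writes $T_t^1\phi-\phi=I_t^\alpha\Delta T^1\phi$ and invokes the smoothing estimate of Lemma \ref{sup norm control lemma}. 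Be aware that the far part of (iii) still requires a genuine case analysis on the sign and size of $\alpha-\beta-\theta$: the paper splits the auxiliary quantity $\int_0^{T-t}\big(\int_r^{t+r}\rho^{\alpha-\beta-\theta-1}d\rho\big)^2dr$ into five cases, and your direct approach will meet the same branching.
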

\begin{proof}[\textit{Proof of \eqref{item: holder conti of T1}}] By \eqref{ineq:slobodetskii's embedding 1}, we have
\begin{equation}
\label{proof of holder conti - 1}
\begin{aligned}
\left\| T_\rho^1 \phi - T_\gamma^1 \phi \right\|_{H_p^{\frac{2\theta}{\alpha}-2}}^p 
&\leq N |\rho-\gamma|^{\alpha\mu p - 1 }\int_0^{T}\int_0^{T} 1_{t>s} \frac{\left\| T_{t}^1 \phi - T_{s}^1 \phi \right\|_{H_p^{\frac{2\theta}{\alpha}-2}}^p}{|t-s|^{1 + \alpha\mu p}}dsdt\\
&\leq N |\rho-\gamma|^{\alpha\mu p - 1 } \int_0^{T}\int_0^{T-t} t^{-1-\alpha \mu p} \left\| T_{t+s}^1 \phi - T_{s}^1 \phi \right\|_{H_p^{\frac{2\theta}{\alpha}-2}}^pdsdt.
\end{aligned}
\end{equation}
Since $T_t^1\phi$ is a solution to \eqref{eq only initial data}, Lemma \ref{sup norm control lemma} yields 
\begin{equation}
\label{proof of holder conti - 2}
\begin{aligned}
&\left\| T_{t+s}^1\phi - T_s^1\phi \right\|_{H_p^{\frac{2\theta}{\alpha}-2}} \\
&\quad\leq \left\| (T_{t+s}^1\phi - \phi) - (T_s^1\phi - \phi) \right\|_{H_p^{\frac{2\theta}{\alpha}-2}} \\
&\quad\leq \left\| \int_0^{t+s}(t+s-r)^{\alpha-1}\Delta T_r^1\phi dr - \int_0^s (s-r)^{\alpha-1}\Delta T_r^1\phi dr \right\|_{H_p^{\frac{2\theta}{\alpha}-2}} \\
&\quad\leq \int_s^{t+s}(t+s-r)^{\alpha-1}\left\|  T_r^1\phi \right\|_{H_p^{\frac{2\theta}{\alpha}}} dr  \\
&\quad\quad + \int_0^s \left|  (t+s-r)^{\alpha-1} - (s-r)^{\alpha-1}\right|\left\|T_r^1\phi\right\|_{H_p^{\frac{2\theta}{\alpha}}}  dr  \\
& \quad \leq N(2t^\alpha+s^\alpha-(t+s)^{\alpha})t^{-\theta}\| \phi \|_{L_p}.
\end{aligned}
\end{equation}
Since $0\leq 2t^\alpha+s^\alpha-(t+s)^{\alpha} \leq 2t^\alpha$, by applying \eqref{proof of holder conti - 2} to \eqref{proof of holder conti - 1},
\begin{equation*}
\begin{aligned}
\left\| T_t^1 \phi - T_s^1 \phi \right\|_{H_p^{\frac{2\theta}{\alpha}-2}}^p
&\leq N |\rho-\gamma|^{\alpha \mu p - 1} \int_0^{T} t^{-1-\alpha \mu p + \alpha p - \theta p}dt\|\phi\|_{L_p}^p \\
&\leq N |\rho-\gamma|^{\alpha \mu p - 1} \|\phi\|_{L_p}^p.
\end{aligned}
\end{equation*}
Thus, we have \eqref{eq: holder conti of T1-1}.

To obtain \eqref{eq: holder conti of T1-2}, use \eqref{ineq:slobodetskii's embedding 2} instead of \eqref{ineq:slobodetskii's embedding 1} and repeat the proof. \\

\noindent\textit{Proof of \eqref{item: holder conti of T2}}. 
Let $\rho>0$ and $\gamma>0$. Notice that \eqref{ineq:slobodetskii's embedding 1} yields
\begin{equation}
\label{start of proof of holder conti of T2 - 1}
\begin{aligned}
\left\| T_\rho^2 f - T_\gamma^2 f \right\|_{H^{2\theta/\alpha}_p}^p 
&\leq N |\rho-\gamma|^{\alpha\mu p - 1} \int_0^{T}\int_0^{T} 1_{t>s} \frac{\left\| T_{t}^2 f - T_{s}^2 f \right\|_{H^{2\theta/\alpha}_p}^p}{|t-s|^{1+\alpha\mu p}}dsdt.
\end{aligned}
\end{equation}
Then, by definition of $T_t^2f$ (see \eqref{def of t2}),
\begin{equation}
\label{start of proof of holder conti of T2 - 2}
\begin{aligned}
&\left\| T^2_{t} f - T^2_{s} f \right\|_{H_p^{2\theta/\alpha}} \\
&\quad\leq \left\| \int_{s}^{t}\int_{\bR^d} q_{\alpha,1}(t-r,y)f(r,\cdot-y)dydr\right\|_{H_p^{2\theta/\alpha}} \\
&\quad\quad + \left\|\int_0^{s}\int_{\bR^d}\left( q_{\alpha,1}(t-r,y) - q_{\alpha,1}(s-r,y) \right)f(r,\cdot-y)dydr \right\|_{H_p^{2\theta/\alpha}}.
\end{aligned}
\end{equation}
Set
\begin{equation}
\label{start of proof of holder conti of T2 - 3}
\begin{aligned}
I_1 &:= \int_0^{T}\int_0^{T} 1_{t>s}\frac{\left\| \int_{s}^{t}\int_{\bR^d} q_{\alpha,1}(t-r,y)f(r,\cdot-y)dydr\right\|_{H_p^{2\theta/\alpha}}^p}{|t-s|^{1+\alpha \mu p}}  dsdt,\\
I_2 &:= \int_0^{T}\int_0^{T} 1_{t>s}\frac{\left\|\int_0^{s}\int_{\bR^d}\left( q_{\alpha,1}(t-r,y) - q_{\alpha,1}(s-r,y) \right)f(r,\cdot-y)dydr \right\|_{H_p^{2\theta/\alpha}}^p}{|t-s|^{1+\alpha \mu p}} dsdt.
\end{aligned}
\end{equation}
Then, apply \eqref{start of proof of holder conti of T2 - 2} and \eqref{start of proof of holder conti of T2 - 3} to \eqref{start of proof of holder conti of T2 - 1},
\begin{equation}
\label{cal for holder regularity - deterministic part - 0}
\begin{aligned}
\left\| T_t^2 f - T_s^2 f \right\|_{H^{2\theta/\alpha}_p}^p &\leq  |t-s|^{\alpha\mu p - 1}(I_1 + I_2).
\end{aligned}
\end{equation}
To deal with $I_1$, we employ Minkowski's inequality, the change of variable, and Lemma \ref{lemma:properties of bessel potential spaces} \eqref{multiplier theorem}. Then,
\begin{equation}
\label{cal for holder regularity - deterministic part - 1}
\begin{aligned}
I_1 
&\leq \int_0^{T}t^{-1-\alpha \mu p}  \left( \int_{0}^{t}\int_{\bR^d}\left| \left( (1-\Delta)^{\theta/\alpha}q_{\alpha,1} \right)(r,y) \right| dy dr  \right)^p dt\int_0^{T}  \left\|f(s,\cdot)\right\|_{L_p}^p ds  \\
& \leq I_{11} + I_{12},
\end{aligned}
\end{equation}
where
\begin{equation*}
\begin{aligned}
I_{11} &:= \int_0^{T}t^{-1-\alpha \mu p}  \left( \int_{0}^{t}\int_{\bR^d}\left|q_{\alpha,1}(r,y)\right| dy dr  \right)^p dt\int_0^{T}  \left\|f(s,\cdot)\right\|_{L_p}^p ds, \\
I_{12} &:= \int_0^{T}t^{-1-\alpha \mu p}  \left( \int_{0}^{t}\int_{\bR^d} \left|\left( \Delta^{\theta/\alpha}q_{\alpha,1} \right)(r,y)\right| dy dr  \right)^p dt\int_0^{T}  \left\|f(s,\cdot)\right\|_{L_p}^p ds.
\end{aligned}
\end{equation*}
Because $\mu < 1$, Lemma \ref{lemma:properties of kernel} \eqref{item:scaling properties of kernel} and \eqref{item:upper bound properties of kernel_2} yield
\begin{equation}
\label{cal for holder regularity - deterministic part - 11}
\begin{aligned}
I_{11} 
&= \int_0^{T}t^{-1 - \alpha\mu p + \alpha p} dt \left( \int_{\bR^d}\left| q_{\alpha,1}(1,y) \right| dy  \right)^p \int_0^{T}  \left\|f(s,\cdot)\right\|_{L_p}^p ds \\
&\leq N   \int_0^{T}  \left\|f(s,\cdot)\right\|_{L_p}^p ds. \\
\end{aligned}
\end{equation}
Similarly, since $\mu < 1-\theta/\alpha$,
\begin{equation}
\label{cal for holder regularity - deterministic part - 12}
\begin{aligned}
I_{12} 
&= \int_0^{T}t^{-1 -\alpha \mu p  + \alpha p - \theta p} dt \left( \int_{\bR^d}\left| \left( \Delta^{\theta/\alpha}q_{\alpha,1} \right)(1,y) \right| dy  \right)^p \int_0^{T}  \left\|f(s,\cdot)\right\|_{L_p}^p ds \\
&\leq N  \int_0^{T}  \left\|f(s,\cdot)\right\|_{L_p}^p ds. \\
\end{aligned}
\end{equation}
Thus, by applying \eqref{cal for holder regularity - deterministic part - 11} and \eqref{cal for holder regularity - deterministic part - 12} to \eqref{cal for holder regularity - deterministic part - 1}, we have
\begin{equation}
\label{cal for holder regularity - deterministic part - end - 1}
I_{1} \leq N\int_0^T\|f(s,\cdot)\|_{L_p}^p ds.
\end{equation}
Next, we address $I_2$. Similar to the case for $I_1$, we have
\begin{equation}
\label{cal for holder regularity - deterministic part - 2}
\begin{aligned}
I_2 
&\leq \int_0^{T}\frac{\int_0^{T-t} \left(\int_0^{s}\int_{\bR^d}\left| (1-\Delta)^{\theta/\alpha}\left( q_{\alpha,1}(t+r,y) - q_{\alpha,1}(r,y) \right) \right|dy\|f(s - r,\cdot)\|_{L_p}dr \right)^p ds}{t^{1+\alpha \mu p}}dt \\
& \leq I_{21} + I_{22},
\end{aligned}
\end{equation}
where 
\begin{equation*}
\begin{aligned}
I_{21} &:= \int_0^{T}\frac{\int_0^{T-t} \left(\int_0^{s}\int_{\bR^d}\left| q_{\alpha,1}(t+r,y) - q_{\alpha,1}(r,y) \right|dy \|f(s - r,\cdot)\|_{L_p} dr \right)^p ds}{t^{1+\alpha \mu p}}dt, \\
I_{22} &:= \int_0^{T} \frac{\int_0^{T-t} \left(\int_0^{s}\int_{\bR^d}\left| \Delta^{\theta/\alpha}\left( q_{\alpha,1}(t+r,y) - q_{\alpha,1}(r,y) \right) \right|dy \|f(s - r,\cdot)\|_{L_p} dr \right)^p ds}{t^{1+\alpha \mu p}} dt.
\end{aligned}
\end{equation*}
Since $\mu < 1$, by Minkowski's inequality and the fundamental theorem of calculus, we have
\begin{equation}
\label{cal for holder regularity - deterministic part - 21}
\begin{aligned}
I_{21} 
&\leq \int_0^{T} t^{-1-\alpha \mu p}  \left(\int_0^{T-t}\int_{\bR^d}\left| q_{\alpha,1}(t+r,y) - q_{\alpha,1}(r,y) \right| dy dr \right)^p dt  \int_0^T\|f(s,\cdot)\|_{L_p(\bR^d)}^pds \\
&\leq \int_0^{T} t^{-1-\alpha \mu p}  \left(\int_0^{T-t}\int_r^{t+r}\int_{\bR^d}\left| q_{\alpha,2}(s,y) \right| dy ds dr \right)^p dt \int_0^T\|f(s,\cdot)\|_{L_p(\bR^d)}^pds \\
&\leq N \int_0^{T} t^{-1-\alpha \mu p}  \left(\int_0^{T-t}  r^{\alpha-1} - (t+r)^{\alpha-1}  dr \right)^p dt \int_0^T\|f(s,\cdot)\|_{L_p(\bR^d)}^pds \\
&\leq N \int_0^{T} t^{-1-\alpha \mu p + \alpha p} dt \int_0^T\|f(s,\cdot)\|_{L_p(\bR^d)}^pds \\
&\leq N  \int_0^T\|f(s,\cdot)\|_{L_p(\bR^d)}^pds. \\
\end{aligned}
\end{equation}
Additionally, since $\mu < 1-\theta/\alpha$, 
\begin{equation}
\label{cal for holder regularity - deterministic part - 22}
\begin{aligned}
I_{22} 
&\leq \int_0^{T} t^{-1-\alpha \mu p}  \left(\int_0^{T-t}\int_{\bR^d}\int_r^{t+r}\left| (\Delta^{\theta/\alpha}q_{\alpha,2}(s,y) \right| ds dy dr \right)^p dt \int_{0}^{T} \|f(s,\cdot)\|_{L_p}^p ds \\
&\leq N \int_0^{T} t^{-1-\alpha \mu p + \alpha p - \theta p } dt \int_{0}^{T} \|f(s,\cdot)\|_{L_p}^p ds \\
&\leq N \int_{0}^{T} \|f(s,\cdot)\|_{L_p}^p ds.
\end{aligned}
\end{equation}
Therefore, by employing \eqref{cal for holder regularity - deterministic part - 21} and \eqref{cal for holder regularity - deterministic part - 22} to \eqref{cal for holder regularity - deterministic part - 2}, we have
\begin{equation}
\label{cal for holder regularity - deterministic part - end - 2}
I_{2} \leq N\int_0^T\|f(s,\cdot)\|_{L_p}^p ds,
\end{equation}
and thus by combining \eqref{cal for holder regularity - deterministic part - end - 1} and \eqref{cal for holder regularity - deterministic part - end - 2} to \eqref{cal for holder regularity - deterministic part - 0}, we have \eqref{eq: holder conti of T2-1}. 

To obtain \eqref{eq: holder conti of T2-2}, employ \eqref{ineq:slobodetskii's embedding 2} instead of \eqref{ineq:slobodetskii's embedding 1} and repeat the proof word for word.

\vspace{2mm}

\noindent\textit{Proof of \eqref{item: holder conti of T3}}.  By \eqref{ineq:slobodetskii's embedding 1}, we have
\begin{equation*}
\begin{aligned}
\bE \left\| T_{\rho}^3 g - T_{\gamma}^3 g \right\|_{H^{2\theta/\alpha}_p(l_2)}^p  
&\leq N |\rho-\gamma|^{\alpha\mu p - 1} \int_0^{T}\int_0^{T} 1_{t>s} \frac{\bE\left\| T_{t}^3 g - T_{s}^3 g \right\|_{H^{2\theta/\alpha}_p(l_2)}^p}{|t-s|^{1+\alpha\mu p}}dsdt.
\end{aligned}
\end{equation*}
Notice that the Burkholder-Davis-Gundy and Minkowski's inequalities imply that
\begin{equation*} 
\begin{aligned}
&\bE\left\| T^3_{t} g - T^3_{s} g \right\|_{H_p^{2\theta/\alpha}(l_2)}^p \\
&\leq  N\int_{\bR^d} \bE \left( \sum_{k}\left| \int_{s}^{t}\int_{\bR^d} (\left( 1-\Delta \right)^{\theta/\alpha}q_{\alpha,\beta})(t-r,y)g^k(r,x-y) dydw_r^k  \right|^2 \right)^{p/2} \\
&\quad +  \left( \sum_{k}\left| \int_0^{s}\int_{\bR^d} \left[\left( 1-\Delta \right)^{\theta/\alpha}\left( q_{\alpha,\beta}(t-r,y)  - q_{\alpha,\beta}(s-r,y) \right)\right]g^k(r,x-y) dydw_r^k \right|^2 \right)^{p/2}dx \\
&\leq  N\int_{\bR^d} \bE \left( \int_0^{t-s}\left( \int_{\bR^d} \left|(\left( 1-\Delta \right)^{\theta/\alpha}q_{\alpha,\beta})(t-s-r,y)\right||g(s+r,x-y)|_{l_2}dy \right)^2dr \right)^{p/2} \\
&\quad +  \left(  \int_0^{s}\left( \int_{\bR^d}  \left|\left( 1-\Delta \right)^{\theta/\alpha}\left( q_{\alpha,\beta}(t-r,y)  - q_{\alpha,\beta}(s-r,y) \right)\right| |g(r,x-y)|_{l_2} dy \right)^2 dr \right)^{p/2}dx \\
&\leq N\bE\left( \int_0^{t-s} \left( \int_{\bR^d} \left|(\left( 1-\Delta \right)^{\theta/\alpha}q_{\alpha,\beta})(t-s-r,y)\right| dy \right)^2 \|g(s+r,\cdot)\|_{L_p(l_2)}^2   dr \right)^{p/2} \\
&\quad + N\bE\left( \int_0^{s} \left( \int_{\bR^d} \left|\left( 1-\Delta \right)^{\theta/\alpha}\left(q_{\alpha,\beta}(t-r,y) - q_{\alpha,\beta}(s-r,y)\right)\right| dy \right)^2 \| g(r,\cdot) \|_{L_p(l_2)}^2  dr \right)^{p/2}.
\end{aligned}
\end{equation*}
Then, set
\begin{equation*}
\begin{aligned}
I_3:=&\int_0^{T}\int_0^{T} 1_{t>s}|t-s|^{-1-\alpha\mu p}\bE\left( \int_0^{t-s} A(t,s,r) \| g(s+r,\cdot) \|_{L_p(l_2)}^2  dr \right)^{p/2} dsdt, \\
I_4:=&\int_0^{T}\int_0^{T} 1_{t>s}|t-s|^{-1-\alpha\mu p} \bE\left( \int_0^{s} B(t,s,r) \|g(r,\cdot)\|_{L_p(l_2)}^2  dr \right)^{p/2} dsdt,
\end{aligned}
\end{equation*}
where 
\begin{equation*}
\begin{aligned}
A(t,s,r) &= \left( \int_{\bR^d} |\left( 1-\Delta \right)^{\theta/\alpha} q_{\alpha,\beta}(t-s-r,y)| dy \right)^2, \\
B(t,s,r) &= \left( \int_{\bR^d} \left|\left( 1-\Delta \right)^{\theta/\alpha} \left( q_{\alpha,\beta}(t-r,y) - q_{\alpha,\beta}(s-r,y) \right)\right| dy \right)^2.
\end{aligned}
\end{equation*}
Note that Minkowski's inequality and Lemma \ref{lemma:properties of bessel potential spaces} \eqref{multiplier theorem} imply that
\begin{equation}
\label{cal for holder regularity - stochastic part - 1}
\begin{aligned}
I_3 
& \leq \int_0^{T} t^{-1-\alpha\mu p}\left( \int_0^{t} \left( \int_{\bR^d} \left| (\left( 1-\Delta \right)^{\theta/\alpha}q_{\alpha,\beta})(t-r,y) \right| dy \right)^2 dr \right)^{p/2}  dt \| g \|_{\bL_p(T,l_2)}^p\\
& \leq I_{31} + I_{32},
\end{aligned}
\end{equation}
where
\begin{equation*}
\begin{aligned}
I_{31} &:= \int_0^{T} t^{-1-\alpha\mu p}\left( \int_0^{t} \left( \int_{\bR^d} |q_{\alpha,\beta}(r,y)| dy \right)^2 dr \right)^{p/2}  dt \| g \|_{\bL_p(T,l_2)}^p, \\
I_{32} & := \int_0^{T} t^{-1-\alpha\mu p}\left( \int_0^{t} \left( \int_{\bR^d} \left| \left( \Delta^{\theta/\alpha}q_{\alpha,\beta}\right)(r,y) \right| dy \right)^2 dr \right)^{p/2}  dt \| g \|_{\bL_p(T,l_2)}^p.
\end{aligned}
\end{equation*}
Since $\frac{1}{\alpha}\left( \alpha - \beta + \frac{1}{2} \right) > \mu$, by Lemma \ref{lemma:properties of kernel} \eqref{item:upper bound properties of kernel_2}, we have
\begin{equation}
\label{cal for holder regularity - stochastic part - 31}
\begin{aligned}
I_{31} 
&= \int_0^{T} t^{-1-\alpha\mu p}\left( \int_0^{t} \left( \int_{\bR^d} |q_{\alpha,\beta}(r,y)| dy \right)^2 dr \right)^{p/2}  dt \| g \|_{\bL_p(T,l_2)}^p \\
&= \int_0^{T} t^{-1-\alpha\mu p}\left( \int_0^{t}  r^{2(\alpha-\beta)} dr \right)^{p/2}  dt \left( \int_{\bR^d} |q_{\alpha,\beta}(1,y)| dy \right)^p \| g \|_{\bL_p(T,l_2)}^p \\
&= \int_0^{T} t^{- 1 + ( \alpha - \beta + 1/2 - \alpha\mu )p}  dt \left( \int_{\bR^d} |q_{\alpha,\beta}(1,y)| dy \right)^p \| g \|_{\bL_p(T,l_2)}^p \\
&\leq N\|g\|_{\bL_p(T,l_2)}^p. \\
\end{aligned}
\end{equation}
Similarly, as $\frac{1}{\alpha}\left( \alpha - \beta - \theta + \frac{1}{2} \right)  > \mu$,
\begin{equation}
\label{cal for holder regularity - stochastic part - 32}
\begin{aligned}
I_{32} 
&\leq \int_0^{T} t^{- 1 + ( - \alpha\mu + \alpha - \beta - \theta + 1/2 )p}  dt \left( \int_{\bR^d} |( \Delta^{\theta/\alpha}q_{\alpha,\beta})(1,y)| dy \right)^p \| g \|_{\bL_p(T,l_2)}^p \\
&\leq N\| g \|_{\bL_p(T,l_2)}^p.
\end{aligned}
\end{equation}
Therefore, by employing \eqref{cal for holder regularity - stochastic part - 31} and \eqref{cal for holder regularity - stochastic part - 32} to \eqref{cal for holder regularity - stochastic part - 1}, we have
\begin{equation}
\label{cal for holder regularity - stochastic part - end - 1}
I_{3} \leq N\|g\|_{\bL_p(T,l_2)}^p.
\end{equation}

\vspace{2mm}

In the case of $I_{4}$, by Minkowski's inequality and Lemma \ref{lemma:properties of bessel potential spaces} \eqref{multiplier theorem}, we have
\begin{equation*}
\begin{aligned}
I_4
& \leq I_{41} + I_{42}.
\end{aligned}
\end{equation*}
Further,
\begin{equation*}
\begin{aligned}
I_{41} &:= \int_0^{T} t^{-1-\alpha\mu p}\bE\int_0^{T-t}  \left( \int_0^{s} \left( \int_{\bR^d} |C(t,r,y)| dy \right)^2 \|g(s-r,\cdot)\|_{L_p(l_2)}^2  dr \right)^{p/2} ds dt, \\
I_{42} & := \int_0^{T} t^{-1-\alpha\mu p}\bE\int_0^{T-t} \left( \int_0^{s} \left( \int_{\bR^d} |\Delta^{\theta/\alpha}C(t,r,y)| dy \right)^2\|g(s-r,\cdot)\|_{L_p(l_2)}^2 dr \right)^{p/2} ds dt,
\end{aligned}
\end{equation*}
where
$$ C(t,r,y) = q_{\alpha,\beta}(t+r,y) - q_{\alpha,\beta}(r,y).
$$
We address $I_{41}$. By Minkowski's inequality, the fundamental theorem of calculus, and Lemma \ref{lemma:properties of kernel}, we have
\begin{equation}
\label{cal for holder regularity - stochastic part - 41}
\begin{aligned}
I_{41} 
& \leq N\int_0^T t^{-1-\alpha\mu p}\left( \int_0^{T-t} \left( \int_{\bR^d} |q_{\alpha,\beta}(t+r,y) - q_{\alpha,\beta}(r,y)| dy \right)^2 dr \right)^{p/2}dt\| g \|^p_{\bL_p(T,l_2)} \\
& \leq N\int_0^T t^{-1-\alpha\mu p}\left( \int_0^{T-t} \left( \int_r^{t+r} \int_{\bR^d} | q_{\alpha,\beta+1}(s,y)| dy ds \right)^2 dr \right)^{p/2}dt\| g \|^p_{\bL_p(T,l_2)} \\
& \leq N H_1 \| g \|^p_{\bL_p(T,l_2)},
\end{aligned}
\end{equation}
where
\begin{equation}
\label{def of h1}
H_1 := \int_0^T t^{-1-\alpha\mu p}\left( \int_0^{T-t} \left( \int_r^{t+r}  s^{\alpha-\beta-1}ds \right)^2 dr \right)^{p/2}  dt
\end{equation}
and  $N = N(\alpha,\beta,d,p,T)$.
Next, we claim that 
\begin{equation}
\label{h1 is finite}
H_1<\infty
\end{equation}
To demonstrate \eqref{h1 is finite}, set
\begin{equation*}
\begin{aligned}
\chi(t) 
:= \int_0^{T-t} \left( \int_r^{t+r}  s^{\alpha-\beta-1}ds \right)^2 dr.
\end{aligned}
\end{equation*}
Furthermore,
\begin{equation}
\label{relation between H and chi}
H_1 = \int_0^T t^{-1-\alpha\mu p}(\chi(t))^{p/2}dt
\end{equation}
Depending on the range of $\alpha-\beta$, we consider the following five cases.

\vspace{2mm}

\noindent\textit{(Case 1.)} $-1/2 < \alpha - \beta < 0$ 

For $t\in(0,T)$, we have
\begin{equation}
\label{bound of h - 1}
\begin{aligned}
0\leq \chi(t)
\leq N\int_0^{T-t} r^{2(\alpha-\beta)} - (t+r)^{2(\alpha-\beta)} dr 
\leq Nt^{2(\alpha-\beta)+1},
\end{aligned}
\end{equation}
where $N = N(\alpha,\beta)$.
Then, since $\frac{1}{\alpha}(\alpha-\beta+1/2) > \mu$, by combining \eqref{relation between H and chi} and \eqref{bound of h - 1},
\begin{equation*}
\begin{aligned}
H_1
\leq N(\alpha,\beta,p)\int_0^{T} t^{-1+p(-\alpha\mu  + \alpha-\beta + 1/2)} dt < \infty.
\end{aligned}
\end{equation*}

\vspace{2mm}

\noindent\textit{(Case 2.)} $\alpha - \beta = 0$

Notice that
\begin{equation*}
\begin{aligned}
\chi(t) = \int_0^{T-t} \left( \int_r^{t+r}  s^{-1}ds \right)^2 dr = \int_0^{T-t} \left( \log\left( \frac{t+r}{r} \right) \right)^2 dr.
\end{aligned}
\end{equation*}
Obviously, $\chi(0) = 0$. Note that
\begin{equation*}
\begin{aligned}
\chi'(t) 
\leq  2\int_0^{\infty}  \frac{1}{1+r} \log\left( \frac{1+r}{r} \right) dr 
=  2\int_0^\infty x(e^{x}-1)^{-1}dx 
= \pi^2/3 
\end{aligned}
\end{equation*}
on $t\in(0,T/2)$. Thus,
\begin{equation}
\label{bound of h - 2}
\begin{aligned}
0\leq \chi(t) = \chi(t) - \chi(0) = \int_0^t \chi'(s)ds \leq \frac{\pi^2}{3}t
\end{aligned}
\end{equation}
on $t\in(0,T/2)$.
Additionally, $\chi(t) \leq N$ on $t\in(T/2,T)$. Therefore, 
\begin{equation*}
H_1 
\leq N\int_0^{T/2} t^{-1+(-\alpha\mu+1/2) p}  dt + N <\infty,
\end{equation*}
where $N = N(\alpha,\beta,p)$.

\vspace{2mm}

\noindent\textit{(Case 3.)} $0 < \alpha - \beta < 1/2$

Observe that $\chi$ is twice continuously differentiable, and
\begin{equation*}
\begin{aligned}
\chi'(t) 
&= (\alpha-\beta)^{-2}\left[ -T^{2(\alpha-\beta)} - (T-t)^{2(\alpha-\beta)} + 2T^{\alpha-\beta}(T-t)^{\alpha-\beta} \right] \\
&\quad + 2(\alpha-\beta)^{-1}\int_0^{T-t}(t+r)^{2(\alpha-\beta)-1} - (t+r)^{\alpha-\beta-1}r^{\alpha-\beta} dr,
\end{aligned}
\end{equation*}
and
\begin{equation}
\label{second derivative of h}
\begin{aligned}
\chi''(t) 
&= 2(\alpha-\beta)^{-1}(T-t)^{2(\alpha-\beta)-1} - 2(\alpha-\beta)^{-1}T^{\alpha-\beta}(T-t)^{\alpha-\beta-1} \\
&\quad + 2(\alpha-\beta)^{-1} T^{\alpha-\beta-1}(T-t)^{\alpha-\beta} - 2(\alpha-\beta)^{-1}t^{2(\alpha-\beta)-1} \\
&\quad  - 2(\alpha-\beta)^{-1}(\alpha-\beta-1)\int_0^{T-t}(t+r)^{\alpha-\beta-2}r^{\alpha-\beta}dr.
\end{aligned}
\end{equation}
In addition, $\chi(0) = \chi'(0) = 0$. Then, by using the fundamental theorem of calculus and $\alpha-\beta\in(0,1/2)$, we obtain
\begin{equation}
\label{estimate of chi}
\begin{aligned}
\chi(t) 
& = \int_0^t\int_0^s \chi''(\rho)d\rho ds \\
& \leq 2(\alpha-\beta)^{-1}\int_0^t\int_0^s 
(T-\rho)^{2(\alpha-\beta)-1}  d\rho ds\\
&\quad - 2(\alpha-\beta)^{-1}(\alpha-\beta-1)  \int_0^t\int_0^s \int_0^{T-\rho}(\rho+r)^{\alpha-\beta-2}r^{\alpha-\beta}dr
d\rho ds \\
& \leq N \int_0^t\left( T^{2(\alpha-\beta)} - (T-s)^{2(\alpha-\beta)} \right) ds + N \int_0^t\int_0^s \int_0^{T-\rho}(\rho+r)^{2(\alpha-\beta)-2}drd\rho ds \\
& \leq N \int_0^t  s^{2(\alpha-\beta)}  ds + N \int_0^t\int_0^s T^{2(\alpha-\beta)-1}drd\rho ds \\
& \leq Nt^{2(\alpha-\beta)+1},
\end{aligned}
\end{equation}
where $N = N(\alpha,\beta,T)$. Thus, 
\begin{equation*}
H_1 
\leq N(\alpha,\beta,p,T)\int_0^{T} t^{-1+p(-\alpha\mu + \alpha - \beta +1/2)}  dt <\infty.
\end{equation*}

\vspace{2mm}

\noindent\textit{(Case 4.)} $\alpha - \beta = 1/2$

Because $\chi(0) = \chi'(0) = 0$, by the fundamental theorem of calculus, we have
\begin{equation*}
\begin{aligned}
\chi(t) 
& = \int_0^t\int_0^s \chi''(\rho)d\rho ds \\
& \leq N\int_0^t\int_0^s 
(T-\rho)^{1/2} + N\int_0^{T-\rho}(\rho+r)^{-3/2}r^{1/2}dr
d\rho ds \\
&\leq Nt^{2}(1+|\log t|),
\end{aligned}
\end{equation*}
where $N = N(T)$. Therefore, 
\begin{equation*}
H_1 
\leq N(p,T)\int_0^{T} t^{-1+p(-\alpha\mu + 1)}(1+|\log t|)^{p/2}  dt <\infty.
\end{equation*}

\vspace{2mm}
\newpage

\noindent\textit{(Case 5.)} $\alpha - \beta  > 1/2$

Similar to before, $\chi(0) = \chi'(0) = 0$. Additionally, as in \eqref{estimate of chi}, we have
\begin{equation*}
\begin{aligned}
\chi(t) 
& = \int_0^t\int_0^s \chi''(r)drds \\
&\leq N \int_0^t\left( T^{2(\alpha-\beta)} - (T-s)^{2(\alpha-\beta)} \right) ds + N \int_0^t\int_0^s \int_0^{T-\rho}(\rho+r)^{2(\alpha-\beta)-2}drd\rho ds \\
& \leq N \int_0^t  s  ds + N \int_0^t\int_0^s  d\rho ds \\
&\leq Nt^2,
\end{aligned}
\end{equation*}
where $N = N(\alpha,\beta,T)$. Therefore,
\begin{equation*}
H_1 
\leq N(\alpha,\beta,p,T)\int_0^{T} t^{-1+p(-\alpha\mu + 1)}  dt <\infty.
\end{equation*}
Thus, we have \eqref{h1 is finite}. Then, by combining \eqref{h1 is finite} and \eqref{cal for holder regularity - stochastic part - 41}, we have $I_{41} \leq N\| g \|^p_{\bL_p(T,l_2)}$.

Next, we deal with $I_{42}$. Minkowski's inequality, the fundamental theorem of calculus, and Lemma \ref{lemma:properties of kernel} yield
\begin{equation*}
\begin{aligned}
I_{42} 
& \leq N\int_0^T t^{-1-\alpha\mu p}\left( \int_0^{T-t} \left( \int_{\bR^d} |\Delta^{\theta/\alpha}q_{\alpha,\beta}(t+r,y) - \Delta^{\theta/\alpha}q_{\alpha,\beta}(r,y)| dy \right)^2 dr \right)^{p/2}dt\| g \|^p_{\bL_p(T,l_2)} \\
& \leq N\int_0^T t^{-1-\alpha\mu p}\left( \int_0^{T-t} \left( \int_r^{t+r} \int_{\bR^d} |\partial_s \Delta^{\theta/\alpha} q_{\alpha,\beta}(s,y)| dy ds \right)^2 dr \right)^{p/2}dt\| g \|^p_{\bL_p(T,l_2)} \\
& \leq N H_2 \| g \|^p_{\bL_p(T,l_2)},
\end{aligned}
\end{equation*}
where $N = N(\alpha,\beta,d,p,T)$ and $H_2 := \int_0^T t^{-1-\alpha\mu p}\left( \int_0^{T-t} \left( \int_r^{t+r}  s^{\alpha-\beta-\theta-1}ds \right)^2 dr \right)^{p/2}  dt$. Similar to the case of $H_1$ (see \eqref{def of h1} and \eqref{h1 is finite}), we demonstrate that
\begin{equation*}
H_2<\infty
\end{equation*}
by considering five cases for $\alpha - \beta - \theta$ instead of $\alpha - \beta$. Then, we have $I_{42}\leq N\| g \|^p_{\bL_p(T,l_2)}$. The lemma is proved.

\end{proof}

\begin{proof}[\textbf{Proof of Theorem \ref{thm:embedding theorem for sol space}}]
It suffices to show that the assertion holds for $\tau = T$. Indeed, assume the results holds for $\cH_p^\gamma(T)$, and let $\tau\leq T$ be a bounded stopping time and $u\in\cH_{p}^{\gamma}(\tau)$. Then, by Definition \ref{eq:def_of_sol} for $\ep>0$, there exists $(f,g)\in \bH_p^{\gamma-2}(\tau)\times\bH_p^{\gamma-2+c_0}(\tau,l_2)$ such that
\begin{equation*}
\partial_t^\alpha u =  f + \partial_t^\beta\int_0^t  g^k dw_t^k\,;\quad u(0,\cdot) = u_0(\cdot)
\end{equation*}
and
\begin{equation*}
\| u \|_{\bH_p^{\gamma}(\tau)} + \| u_0 \|_{U_p^{\gamma}} + \| f \|_{\bH_p^{\gamma-2}(\tau)} + \| g \|_{\bH_p^{\gamma-2+c_0}(\tau,l_2)} \leq \| u \|_{\cH_p^\gamma(\tau)} + \ep.
\end{equation*}
Set $\bar f := (f - \Delta u)1_{t\leq \tau}$ and $\bar g := g1_{t\leq \tau}$; thus, $u$ satisfies 
\begin{equation} 
\label{eq:conti of u to extend T}
\partial_t^\alpha u = \Delta u + \bar f + \partial_t^\beta\int_0^t \bar g^k dw_t^k,\quad 0<t\leq \tau\,;\quad u(0,\cdot) = u_0(\cdot).
\end{equation}
In contrast, by \cite[Theorem 2.18]{kim2020sobolev}, there exists $v\in \cH_p^{\gamma}(T)$ such that $v$ satisfies 
\begin{equation*}
\partial_t^\alpha v = \Delta v + \bar f + \partial_t^\beta\int_0^t \bar g^k dw_t^k, \quad 0 < t \leq \tau\,;\quad v(0,\cdot) = u_0(\cdot).
\end{equation*}
Additionally, 
\begin{equation} 
\label{embedding estimate}
\begin{aligned}
\| v \|_{\cH_p^{\gamma}(T)} 
&\leq N\left( \left\| \bar f \right\|_{\bH_p^{\gamma-2}(T)} + \left\| \bar g \right\|_{\bH_p^{\gamma-2+c_0}(T,l_2)} + \left\| u_0 \right\|_{U_p^{\gamma}} \right) \\
&\leq N\left( \| u \|_{\bH_p^{\gamma}(\tau)} + \| u_0 \|_{U_p^{\gamma}} + \| f \|_{\bH_p^{\gamma-2}(\tau)} + \| g \|_{\bH_p^{\gamma-2+c_0}(\tau,l_2)} \right)  \\
&\leq  N\| u \|_{\cH_p^\gamma(\tau)} + N\ep,
\end{aligned}
\end{equation}
where $N$ is independent of $\ep$. Because $v\in \cH_p^{\gamma}(T)$, $v\in C^{\alpha\mu - 1/p}\left( [0,T];H_p^{\gamma-2\nu} \right)$ almost surely and 
\begin{equation}
\label{embedding for v}
\bE\| v \|_{C^{\alpha\mu - 1/p}([0,T];H_p^{\gamma-2\nu})}^p \leq N\| v \|_{\cH_p^{\gamma}(T)}^p
\end{equation}
by the hypothesis. Therefore, due to $\tau\leq T$, \eqref{embedding estimate} and \eqref{embedding for v} yield
\begin{equation}
\label{ineq: proof of continuous version}
\bE\| u \|_{C^{\alpha\mu - 1/p}([0,\tau];H_p^{\gamma-2\nu})}^p 
\leq\bE\| v \|_{C^{\alpha\mu - 1/p}([0,T];H_p^{\gamma-2\nu})}^p
\leq N\| v \|_{\cH_p^{\gamma}(T)}^p 
\leq N\| u \|_{\cH_p^{\gamma}(\tau)}^p + N\ep,
\end{equation}
where $N$ is independent of $\ep$.
Note that $\bar u:=u-v$ satisfies
\begin{equation*}
\partial_t^\alpha \bar u = \Delta \bar u,\quad 0< t \leq \tau;\quad \bar u(0,\cdot) = 0.
\end{equation*}
Then, by the deterministic version of \cite[Theorem 2.18]{kim2020sobolev}, we have $u(t,\cdot)=v(t,\cdot)$ for almost every $(\omega,t)\in\Omega\times[0,\tau]$.  Thus, $v$ is an $H_p^{\gamma-2\nu}$-valued continuous version of $u$. Additionally, from \eqref{ineq: proof of continuous version}
\begin{equation*}
\bE\| u \|_{C^{\alpha\mu - 1/p}([0,\tau];H_p^{\gamma-2\nu})}^p
\leq N\| u \|_{\cH_p^{\gamma}(\tau)}^p + N\ep.
\end{equation*}
In addition, as $\ep>0$ is arbitrary and $N$ is independent of $\ep$, we have \eqref{item:time holder continuity of sol}. Additionally, notice that \eqref{embedding estimate} allows us to prove the assertions with $\left\|  f \right\|_{\bH_p^{\gamma-2}(T)} + \left\|  g \right\|_{\bH_p^{\gamma-2+c_0}(T,l_2)} + \left\| u_0 \right\|_{U_p^{\gamma}}$ instead of $\| u \|_{\cH_p^{\gamma}(T)}$. 

Due to Lemma \ref{lemma:properties of bessel potential spaces} \eqref{item:isometry}, we only consider the case $\gamma = 2\nu$, where $\nu\in(0,1)$ satisfies \eqref{conditions of mu and nu}. Moreover, by using the approximation to the identity, we may assume that $u_0$ is infinitely differentiable and compactly supported in $x$. Furthermore, we also assume that $f$ and $g = (g^1,g^2,\dots)$ denotes the function of the form satisfying \eqref{form of g} (see \cite[Theorem 3.10]{kry1999analytic}). 
Additionally, it should be remarked that $u$ can be written as
\begin{equation*}
u(t,x) = T^1_t u_0 + T^2_{t}f + T^3_{t}g
\end{equation*}
since $u$ satisfies
\begin{equation*}
\partial_t^\alpha u = \Delta u + f + \partial_t^\beta\int_0^t g^k dw_t^k\,;\quad u(0,\cdot) = u_0(\cdot)
\end{equation*}
for almost every $(\omega,t)\in \opar0,\tau\cbrk$ (see \cite{kim2017lq,kim2019sobolev,kim2015asymptotic}).

Now, observe that $u\in C^{\alpha\mu - 1/p}([0,\tau];H_p^{\gamma-2\nu})$ almost surely. Indeed, by Lemma \ref{holder semi norm control lemma}, we have
\begin{equation*}
\begin{aligned}
\bE\left\| T^1_{t} f - T^1_{s} f \right\|_{H^{\gamma-2\nu}_p}^p
&\leq N|t-s|^{\alpha\mu p - 1}\left\| \phi \right\|_{H^\gamma_p}^p,\\
\bE\left\| T^2_{t} f - T^2_{s} f \right\|_{H^{\gamma-2\nu}_p}^p
&\leq N|t-s|^{\alpha\mu p - 1}\left\| f \right\|_{\bH^{\gamma-2}_p(\tau)}^pdr,\\
\bE \left\| T_{t\wedge \tau}^3 g - T_{s\wedge \tau}^3 g \right\|_{H^{\gamma-2\nu}_p(l_2)}^p 
&\leq N|t-s|^{\alpha \mu p - 1}\| g \|_{\bH^{\gamma-2+c_0}_p(\tau,l_2)}^p.
\end{aligned}
\end{equation*}
Thus, Kolmogorov continuity theorem (e.g., \cite[Theorem 1.4.8]{krylov2002introduction}) and the above inequalities again, we have the continuity of $u$.

Next we prove \eqref{item:time holder continuity of sol}. Choose $\mu$ and $\nu$ satisfy \eqref{conditions of mu and nu}. Observe that Lemma \ref{lemma:properties of bessel potential spaces} \eqref{item:isometry} and \eqref{eq: holder conti of T1-1} with $\theta = \alpha - \alpha\nu$ imply that
\begin{equation*}
\begin{aligned}
\bE\sup_{0\leq s < t \leq T}\frac{\| T_t^1 u_0 - T_s^1 u_0 \|_{H^{\gamma - 2\nu}_p}^p}{|t-s|^{\alpha\mu p - 1}} 
\leq  N\bE\left\| u_0 \right\|_{H^{\gamma }_p}^p.
\end{aligned}
\end{equation*}
As \eqref{conditions of mu and nu} is assumed, Lemma \ref{lemma:properties of bessel potential spaces} \eqref{item:isometry} and \eqref{eq: holder conti of T2-2}  with $\theta = \alpha -\alpha \nu$ yield the following:
\begin{equation*}
\begin{aligned}
\bE\sup_{0\leq s < t \leq \tau} \frac{\| T^2_t f - T^2_2 f \|_{H^{\gamma- 2\nu}_p}^p}{|t-s|^{\alpha\mu p - 1}} 
&\leq N \| f \|_{\bH_p^{\gamma  - 2}(\tau)}^p.
\end{aligned}
\end{equation*}
Furthermore, by \eqref{conditions of mu and nu}, Lemma \ref{lemma:properties of bessel potential spaces} \eqref{item:isometry}, and \eqref{eq: holder conti of T3-2} with $\theta = \alpha(1-c_0/2)-\alpha\nu$, we have
\begin{equation*}
\begin{aligned}
\bE \sup_{0\leq s < t \leq T}\frac{\left\| T_t^3 g - T_s^3 g \right\|_{H^{\gamma - 2\nu}_p(l_2)}^p}{|t-s|^{\alpha \mu p - 1}} 
&\leq N \| g \|_{\bH_p^{\gamma - 2 + c_0}(\tau,l_2)}^p.
\end{aligned}
\end{equation*}
The theorem is proved.

\end{proof}


\bibliographystyle{plain}

\begin{thebibliography}{10}





\bibitem{garra2011fractional}
Garra, Roberto.
\newblock Fractional-calculus model for temperature and pressure waves in fluid-saturated porous rocks.
\newblock {\em Physical Review E}, 84(3):036605, 2011.


\bibitem{keller1981propagation}
Keller, Jakob J.
\newblock Propagation of simple non-linear waves in gas filled tubes with friction.
\newblock {\em Zeitschrift f{\"u}r angewandte Mathematik und Physik ZAMP}, 32(2):170--181, 1981.



\bibitem{akram2020efficient}
Tayyaba Akram, Muhammad Abbas, Muhammad Riaz, Ahmad Ismail, and Norhashidah Ali.
\newblock An efficient numerical technique for solving time fractional burgers
  equation.
\newblock {\em Alexandria Engineering Journal}, 59(4):2201--2220, 2020.

\bibitem{almeida2017gronwall}
Ricardo Almeida.
\newblock A Gronwall inequality for a general caputo fractional operator.
\newblock {\em Mathematical Inequalities Applications}, 20(4), 2017.

\bibitem{bagley1985fractional}
Ronald Bagley and Peter Torvik.
\newblock Fractional calculus in the transient analysis of viscoelastically
  damped structures.
\newblock {\em AIAA journal}, 23(6):918--925, 1985.

\bibitem{baleanu2012fractional}
Dumitru Baleanu, Kai Diethelm, Enrico Scalas, and Juan Trujillo.
\newblock {\em Fractional calculus: models and numerical methods}, volume~3.
\newblock World Scientific, 2012.

\bibitem{chen2015fractional}
Zhen-Qing Chen, Kyeong-Hun Kim, and Panki Kim.
\newblock Fractional time stochastic partial differential equations.
\newblock {\em Stochastic Processes and their Applications}, 125(4):1470--1499,
  2015.

\bibitem{dong2019lp}
Hongjie Dong and Doyoon Kim.
\newblock $L_p$-estimates for time fractional parabolic equations with coefficients measurable in time.
\newblock {\em Advances in Mathematics}, 345:289--345, 2019.

\bibitem{el2012parametric}
Talaat El-Danaf and Adel Hadhoud.
\newblock Parametric spline functions for the solution of the one time fractional burgers’ equation.
\newblock {\em Applied Mathematical Modelling}, 36(10):4557--4564, 2012.

\bibitem{esen2015numerical}
Alaattin Esen and Orkun Tasbozan.
\newblock Numerical solution of time fractional burgers equation.
\newblock {\em Acta Universitatis Sapientiae, Mathematica}, 7(2):167--185,
  2015.

\bibitem{giona1992fractional}
Massimiliano Giona and Hector Eduardo Roman.
\newblock Fractional diffusion equation for transport phenomena in random media.
\newblock {\em Physica A: Statistical Mechanics and its Applications},
  185(1-4):87--97, 1992.

\bibitem{grafakos2009modern}
Loukas Grafakos.
\newblock {\em Modern fourier analysis}, volume 250.
\newblock Springer, 2009.

\bibitem{gyongy1998existence}
Istv{\'a}n Gy{\"o}ngy.
\newblock Existence and uniqueness results for semilinear stochastic partial differential equations.
\newblock {\em Stochastic Processes and their Applications}, 73(2):271--299,
  1998.

\bibitem{gyongy1999stochastic}
Istv{\'a}n Gy{\"o}ngy and David Nualart.
\newblock On the stochastic Burgers’ equation in the real line.
\newblock {\em The Annals of Probability}, 27(2):782--802, 1999.

\bibitem{han2022lp}
Beom-Seok Han.
\newblock $L_p$-regularity theory for semilinear stochastic partial differential
  equations with multiplicative white noise.
\newblock {\em Journal of Mathematical Analysis and Applications}, page 126366,
  2022.

\bibitem{han2022regularity}
Beom-Seok Han.
\newblock A regularity theory for stochastic generalized Burgers’ equation driven by a multiplicative space-time white noise.
\newblock {\em Stochastics and Partial Differential Equations: Analysis and
  Computations}, pages 1--41, 2022.

\bibitem{herrmann2011fractional}
Richard Herrmann.
\newblock {\em Fractional calculus: an introduction for physicists}.
\newblock World Scientific, 2011.

\bibitem{hilfer2000applications}
Rudolf Hilfer.
\newblock {\em Applications of fractional calculus in physics}.
\newblock World scientific, 2000.

\bibitem{ichise1971analog}
Mitsunojo Ichise, Yutaka Nagayanagi, and Tsugio Kojima.
\newblock An analog simulation of non-integer order transfer functions for
  analysis of electrode processes.
\newblock {\em Journal of Electroanalytical Chemistry and Interfacial
  Electrochemistry}, 33(2):253--265, 1971.

\bibitem{inc2008approximate}
Mustafa Inc.
\newblock The approximate and exact solutions of the space-and time-fractional
  Burgers equations with initial conditions by variational iteration method.
\newblock {\em Journal of Mathematical Analysis and Applications},
  345(1):476--484, 2008.

\bibitem{kilbas1993fractional}
Anatoly Kilbas, Oleg Marichev, and Stefan Samko.
\newblock Fractional integrals and derivatives (theory and applications), 1993.

\bibitem{kim2017lq}
Ildoo Kim, Kyeong-Hun Kim, and Sungbin Lim.
\newblock An $L_q(L_p)$-theory for the time fractional evolution equations with
  variable coefficients.
\newblock {\em Advances in Mathematics}, 306:123--176, 2017.

\bibitem{kim2019sobolev}
Ildoo Kim, Kyeong-Hun Kim, Sungbin Lim.
\newblock A sobolev space theory for stochastic partial differential equations
  with time-fractional derivatives.
\newblock {\em The Annals of Probability}, 47(4):2087--2139, 2019.

\bibitem{kim2015asymptotic}
Kyeong-Hun Kim and Sungbin Lim.
\newblock Asymptotic behaviors of fundamental solution and its derivatives
  related to space-time fractional differential equations.
\newblock {\em Journal of the Korean Mathematical Society}, 53(4):929--967, 2015.

\bibitem{kim2020sobolev}
Kyeong-Hun Kim and Daehan Park.
\newblock A sobolev space theory for the time-fractional stochastic partial
  differential equations driven by levy processes.
\newblock {\em arXiv preprint arXiv:2006.05050}, 2020.

\bibitem{kry1999analytic}
Nicolai Krylov.
\newblock An analytic approach to SPDEs.
\newblock {\em Stochastic partial differential equations: six perspectives},
  64:185--242, 1999.

\bibitem{krylov2008lectures}
Nicolai Krylov.
\newblock {\em Lectures on elliptic and parabolic equations in Sobolev spaces},
  volume~96.
\newblock American Mathematical Society, 2008.

\bibitem{krylov2002introduction}
Nicolai Krylov.
\newblock {\em Introduction to the theory of random processes}, volume~43.
\newblock American Mathematical Society, 2002.

\bibitem{lewis2018stochastic}
Peter Lewis and David Nualart.
\newblock Stochastic Burgers' equation on the real line: regularity and moment
  estimates.
\newblock {\em Stochastics}, 90(7):1053--1086, 2018.

\bibitem{li2016linear}
Dongfang Li, Chengjian Zhang, and Maohua Ran.
\newblock A linear finite difference scheme for generalized time fractional
  Burgers' equation.
\newblock {\em Applied Mathematical Modelling}, 40(11-12):6069--6081, 2016.

\bibitem{magin2004fractional}
Richard Magin.
\newblock Fractional calculus in bioengineering, part 1.
\newblock {\em Critical Reviews™ in Biomedical Engineering}, 32(1), 2004.

\bibitem{podlubny1998fractional}
Igor~Podlubny.
\newblock {\em Fractional differential equations: an introduction to fractional
  derivatives, fractional differential equations, to Methods of their solution
  and some of their applications}.
\newblock ISSN. Elsevier Science, 1998.


\bibitem{sun1984application}
Hun~H Sun, Banu Onaral, and Yuan-Ying Tso.
\newblock Application of the positive reality principle to metal electrode
  linear polarization phenomena.
\newblock {\em IEEE Transactions on Biomedical Engineering}, (10):664--674,
  1984.

\bibitem{triebel2010theory}
Hans~Triebel.
\newblock {\em Theory of Function Spaces}.
\newblock Modern Birkh{\"a}user Classics. Springer Basel, 2010.

\bibitem{walsh1986introduction}
John Walsh.
\newblock An introduction to stochastic partial differential equations.
\newblock In {\em {\'E}cole d'{\'E}t{\'e} de Probabilit{\'e}s de Saint Flour
  XIV-1984}, pages 265--439. Springer, 1986.

\bibitem{zou2017stochastic}
Guang-an Zou and Bo~Wang.
\newblock Stochastic Burgers’ equation with fractional derivative driven by
  multiplicative noise.
\newblock {\em Computers \& Mathematics with Applications}, 74(12):3195--3208,
  2017.

\end{thebibliography}

\end{document}